\newtheorem{theorem}{Theorem}[section]
\newtheorem{lemma}[theorem]{Lemma}
\newtheorem{prop}[theorem]{Proposition}
\newtheorem{corollary}[theorem]{Corollary}
\theoremstyle{definition}
\newtheorem{definition}[theorem]{Definition}
\newtheorem{rem}[theorem]{Remark}
\newtheorem{example}[theorem]{Example}
\newcommand\pf{\begin{proof}}
\newcommand\epf{\end{proof}}
\newcommand{\mr}{\mathrm}
\numberwithin{equation}{section}
\title[On functor points of affine supergroups]
{On functor points of affine supergroups}
\author[A.~Masuoka]{Akira Masuoka}
\address{Akira Masuoka,
Institute of Mathematics, 
University of Tsukuba, 
Ibaraki 305-8571, Japan}
\email{akira@math.tsukuba.ac.jp}
\author[T.~Shibata]{Taiki Shibata}
\address{Taiki Shibata, 
Department of Applied Mathematics,
Okayama University of Science,
1-1 Ridaicho, Kita-ku, Okayama
700-0005, Japan}
\email{shibata@xmath.ous.ac.jp}
\dedicatory{Dedicated to Professor Susan Montgomery in honor of her distinguished career}
\begin{document}

\begin{abstract}
To construct an affine supergroup from a Harish-Chandra pair, Gavarini \cite{G} 
invented a natural method, which first constructs a group functor and then proves
that it is representable.
We give a simpler and more conceptual presentation of his construction
in a generalized situation, using
Hopf superalgebras over a superalgebra.
As an application of the construction,    
given a closed super-subgroup of an algebraic supergroup,
we describe the normalizer and the centralizer, 
using Harish-Chandra pairs. We also prove a tensor product decomposition theorem for 
Hopf superalgebras, and describe explicitly by cocycle deformation, the 
difference which results from the two choices of dualities
found in literature.
\end{abstract}

\maketitle

\noindent
{\sc Key Words:}
affine supergroup, algebraic supergroup, 
Hopf superalgebra, Harish-Chandra pair

\medskip
\noindent
{\sc Mathematics Subject Classification (2000):}
14L15, 
14M30, 
16T05  

\section{Introduction}\label{sec:introduction}
\subsection{Basic definitions}\label{subsec:definitions}
In this paper we work over a non-zero commutative ring $\Bbbk$. 
The unadorned $\otimes$ is the tensor product over $\Bbbk$. A $\Bbbk$-module
is said to be $\Bbbk$-\emph{finite}, if it is finitely generated.   

The word ``super" is a synonym of ``graded by the group $\mathbb{Z}_2=\{ 0, 1\}".$
Therefore, a $\Bbbk$-\emph{supermodule} is a $\Bbbk$-module $V$ graded by $\mathbb{Z}_2$
so that $V =V_0 \oplus V_1$. When we say that $v$ is an element of $V$, we assume
that it is homogeneous, and denote its degree by $|v|$. If $i=|v|$, the element or the component
$V_i$ is said to be \emph{even} or \emph{odd}, according to  $i=0$ or $i=1$. We say that $V$ is \emph{purely 
even} if $V=V_0$, and is \emph{purely odd} if $V=V_1$. 
The dual $\Bbbk$-module $V^*$ of $V$ is again a $\Bbbk$-supermodule so that
$(V^*)_i=V_i^*$, $i=0,1$.  

The $\Bbbk$-supermodules form a symmetric tensor category $\mathsf{SMod}_{\Bbbk}$ 
with respect to the tensor product $\otimes$, the unit object $\Bbbk$ and the super-symmetry
\begin{equation}\label{eq:super-symmetry}
c_{V, W} : V \otimes W \to W \otimes V,\quad c_{V,W}(v \otimes w) = (-1)^{|v||w|} w \otimes v. 
\end{equation}
Super-objects are objects, such as algebra object or Hopf-algebra object, 
defined in $\mathsf{SMod}_{\Bbbk}$. They are called with ``super" attached, so as
(\emph{Hopf}) \emph{superalgebras}. Ordinary objects, such
as (Hopf) algebras, are regarded as purely even super-objects. We let
\[
\mathsf{SAlg}_{\Bbbk}, \quad \mathsf{Alg}_{\Bbbk}
\]
denote the category of super-commutative superalgebras
and its full subcategory
consisting of all commutative algebras, respectively. 
A \emph{group functor} is a group-valued functor
defined on $\mathsf{SAlg}_{\Bbbk}$ or $\mathsf{Alg}_{\Bbbk}$.

Working over an arbitrary commutative ring, we should be careful to define super-commutativity.
By saying that a superalgebra $A$ is \emph{super-commutative}, we require that $a^2=0$ for all 
odd elements $a \in A_1$, in addition to the usual requirement that the product on $A$
should be invariant, composed with the super-symmetry.

\subsection{Algebraic supergroups and Harish-Chandra pairs}\label{subsec:ASG_and_HCP}
The notion of affine or algebraic groups defined in \cite[Part I, 2.1]{J} 
is directly generalized to the super context, as follows. An \emph{affine supergroup}
is a representable group functor $\mathbf{G}$ defined on $\mathsf{SAlg}_{\Bbbk}$.
The super-commutative superalgebra $\mathcal{O}(\mathbf{G})$ which represents $\mathbf{G}$
necessarily has a Hopf superalgebra structure which arises uniquely from 
the group structure on $\mathbf{G}$. 
An affine supergroup $\mathbf{G}$ is called an \emph{algebraic supergroup} if
$\mathcal{O}(\mathbf{G})$ is finitely generated. 

We let $\mathsf{ASG}_{\Bbbk}$ denote the category of algebraic supergroups. Given
$\mathbf{G}\in \mathsf{ASG}_{\Bbbk}$, 
we have
\[
G=\mathbf{G}_{ev}\ \text{and}\ \mathfrak{g}=\operatorname{Lie}(\mathbf{G}),
\]
where $G$ is the algebraic group defined as a restricted group functor by $G = \mathbf{G}|_{\mathsf{Alg}_{\Bbbk}}$,
and $\mathfrak{g}$ is the Lie superalgebra of $\mathbf{G}$.
Very roughly speaking, a \emph{Harish-Chandra pair} is such a pair $(G, \mathfrak{g})$ with
some additional structures involved; the notion is due to Kostant \cite{Kostant}, and 
an irredundant definition will be reproduced from   
\cite{M3, MZ} in Section \ref{subsec:equiv_over_fields} when $\Bbbk$ is a field of 
characteristic $\ne 2$. In this last situation the last cited articles reformulated
the result \cite[Theorem 29]{M2}, which is formulated in purely Hopf-algebraic terms, 
so that $\mathbf{G}\to (\mathbf{G}_{ev}, \operatorname{Lie}(\mathbf{G}))$ gives a
category equivalence
\begin{equation}\label{eq:equiv_intro}
\mathsf{ASG}_{\Bbbk}\overset{\approx}{\longrightarrow} \mathsf{HCP}_{\Bbbk},
\end{equation}
where $\mathsf{HCP}_{\Bbbk}$ denotes the category of Harish-Chandra pairs; the result was applied
in several papers including \cite{M2, M3, MS, GZ, MZ}.
An analogous category equivalence for super Lie groups had been proved 30 years before by Koszul \cite{Koszul}
in the $C^{\infty}$ situation, and slightly before
by Vishnyakova \cite{Vi} in the complex analytic situation; see
also \cite[Section 7.4]{CCF}. 
Very recently, 
the same result was proved by Hoshi and the first-named author \cite{HM}
for super Lie groups over a complete field of characteristic $\ne 2$. 
For the above cited result \cite[Theorem 29]{M2} in the algebraic situation, 
crucial is the result \cite[Theorem 4.5]{M1} that for every $\mathbf{G} \in \mathsf{ASG}_{\Bbbk}$,
the Hopf superalgebra $\mathcal{O}(\mathbf{G})$ decomposes so that 
\begin{equation}\label{eq:Int_TPD}   
\mathcal{O}(\mathbf{G}) \simeq \mathcal{O}(\mathbf{G}_{ev}) \otimes \wedge(\mathfrak{g}_1^*) 
\end{equation}
as a left $\mathcal{O}(\mathbf{G}_{ev})$-comodule superalgebra with counit, 
where $\mathfrak{g}=\operatorname{Lie}(\mathbf{G})$. 
In the previous work \cite{MS} the authors proved the category equivalence above when $\Bbbk$ is a commutative ring
which is $2$-torsion free, restricting the objects of $\mathsf{ASG}_{\Bbbk}$ to those $\mathbf{G}$ such that
$\mathcal{O}(\mathbf{G})$ decomposes as above. 
Gavarini \cite{G} proved the same result more generally when $\Bbbk$ is an arbitrary commutative ring,
requiring Lie superalgebras to have an additional structure, $2$-\emph{operations}. 

The quasi-inverse of the functor \eqref{eq:equiv_intro} constructed in \cite{M2, MS} is close 
to the one constructed by Koszul and others for super Lie groups. 
Gavarini's quasi-inverse, which is our concern, is 
new and natural; it first constructs a group functor and then proves
that it is representable. He defines the groups of functor points by generators and relations.

We remark that in the appendix of \cite{MS}, Gavarini's category equivalence was re-proved 
by using the method of the cited
article, to supplement the original proof which skipped some necessary arguments. 

\subsection{Three purposes}\label{subsec:purposes}
This paper is written for three purposes.

The first one is to give a simpler and more conceptual 
presentation of Gavarini's construction cited above. Given a Harish-Chandra pair $(G, \mathfrak{g})$
and $A \in \mathsf{SAlg}_{\Bbbk}$, we construct the group $\mathbf{\Gamma}(A)$ which shall be the functor
points of the desired affine supergroup $\mathbf{\Gamma}$, in two steps. Recall that  
the universal envelope $\mathbf{U}(\mathfrak{g})$ of $\mathfrak{g}$ is naturally a
Hopf superalgebra, 
whence the base extension $A \otimes \mathbf{U}(\mathfrak{g})$ to $A$ is 
the Hopf superalgebra over $A$. Consider the following elements in this last Hopf superalgebra over $A$:
\[ 
e(a, v) = 1 \otimes 1 + a \otimes v,\quad f(\epsilon, x)= 1\otimes 1 + \epsilon \otimes x, 
\]
where $a \in A_1$,\ $v\in \mathfrak{g}_1$,\ $x \in \mathfrak{g}_0$, and $\epsilon \in A_0$ with $\epsilon^2=0$.
Each of these elements, 
being the sum of the identity element and an even 
primitive whose tensor-square is zero, is an even grouplike; see Lemma \ref{lem:e(a,v)}.
The first step of our construction is to construct the group $\mathbf{\Sigma}(A)$ 
generated by these even grouplikes. 
The subgroup $F(A_0)$ generated by all $f(\epsilon, x)$, being included in $A_0 \otimes U(\mathfrak{g}_0)$,
has a natural group map into 
the group $G(A_0)$ of $A_0$-points of $G$. 
The second step is to construct the desired $\mathbf{\Gamma}(A)$ from
$\mathbf{\Sigma}(A)$ by what we call the \emph{base extension} along the group map $F(A_0) \to G(A_0)$;
this notion of base extensions of groups is defined and discussed in Section \ref{sec:base_extension}. 
Our construction will be seen to be natural, when one thinks of the natural pairing of 
$A \otimes \mathbf{U}(\mathfrak{g})$ with $\mathcal{O}(\mathbf{\Gamma})$; see Lemma \ref{lem:group_map}. 
The construction is done in Section \ref{sec:construction}, 
in a more generalized situation, aiming 
at an application 
to super Lie groups; see Remark \ref{rem:super_Lie_group}. 

We have described the contents of Sections \ref{sec:base_extension} and \ref{sec:construction}. 
Section \ref{sec:preliminary} is devoted to discussing some basic results on super-objects that include
the comparison of dualities explained in the next subsection. 

Section \ref{sec:equivalence}
starts with the subsection in which we
re-prove Gavarini's category equivalence 
cited above, using our method of construction.
This aims to supplement again Gavarini's original proof;
see Remark \ref{rem:compare_with_G}. 
Recall from the second paragraph of Section \ref{subsec:ASG_and_HCP} that 
the algebraic supergroups $\mathbf{G}$ over an arbitrary commutative ring $\Bbbk$, 
for which the category equivalence will be re-proven, 
are assumed so that $\mathcal{O}(\mathbf{G})$
decomposes as in \eqref{eq:Int_TPD}. The second purpose of ours, which is achieved
in Section \ref{subsec:TPD}, is to prove, along the line of our renewed proof, that the assumption above is 
necessarily satisfied
if $\mathcal{O}(\mathbf{G}_{ev})$ is $\Bbbk$-flat; see Theorem \ref{thm:TPD}.
This theorem benefits two results cited in the second paragraph of Section \ref{subsec:ASG_and_HCP}. 
In fact it improves our previous result in \cite{MS},
in which $\Bbbk$ is assumed to be $2$-torsion free, while
the proof gives an alternative proof of \cite[Theorem 4.5]{M1}, 
in which $\Bbbk$ is assumed to be a field;
see Remark \ref{rem:compare_TPD}.

In the final Section \ref{sec:equiv_over_fields}, which consists of 2 subsections,
we suppose that $\Bbbk$ is a field of characteristic $\ne 2$. 
In Section \ref{subsec:equiv_over_fields} we 
reproduce the category equivalence \eqref{eq:equiv_intro} from \cite{M3, MZ},
giving the irredundant definition of Harish-Chandra pairs referred to above. 
In Section \ref{subsec:application}, 
given an algebraic supergroup $\mathbf{G}$ and its closed super-subgroup $\mathbf{H}$, we describe
the normalizer $\mathcal{N}_{\mathbf{G}}(\mathbf{H})$ and the centralizer $\mathcal{Z}_{\mathbf{G}}(\mathbf{H})$
in terms of Harish-Chandra pairs; see Theorem \ref{thm:normal_central}. 
Gavarini's construction is quite useful to discuss group-theoretical properties of affine supergroups,
and it applies to prove the last cited theorem. This application is indeed the third purpose of ours. 

\subsection{Comparing dualities}\label{subsec:duaity_in_addition}
We will make it clear that there are two choices, 
when we discuss the duality of Hopf superalgebras; this may not have been clearly recognized
so far.     
If the simpler duality is chosen,  as was done by \cite{M2, M3, MS, MZ}, 
one defines a pairing $\langle \ , \ \rangle : \wedge(W^*) \times  \wedge(W) \to \Bbbk$
between the exterior algebras on a $\Bbbk$-finite free module $W$ and on its dual $W^*$, 
as usually so that
\begin{equation}\label{eq:usual_duality} 
\langle v_1\wedge \dots \wedge v_n ,\ w_1\wedge \dots \wedge w_m \rangle
= \delta_{n,m} \, \det\big( v_i(w_j) \big),\quad m, n \ge 0, 
\end{equation}
where $v_i \in W^*$, $w_i \in W$. 
We have to choose
the other one, replacing \eqref{eq:usual_duality} with \eqref{eq:duality} below, 
since the simpler duality does not work well for Hopf superalgebras
over $A\in \mathsf{SAlg}_{\Bbbk}$, in general. In Section \ref{subsec:compare} this circumstance is explained, and
the difference caused by choices is described in terms of \emph{cocycle deformations}.
Fortunately, the category equivalence 
\eqref{eq:equiv_intro} obtained with our choice of duality
coincides with the one obtained before in \cite{M3, MS, MZ}, up to an involutive category isomorphism 
$\mathsf{HCP}_{\Bbbk}\to \mathsf{HCP}_{\Bbbk}$, as will be seen 
in Remarks \ref{rem:compare_with_MS} and \ref{rem:coincidence_up_to_isom}. 

\section{Base extension of groups}\label{sec:base_extension}

Suppose that the quintuple
\[ (\Sigma,\ F,\ G,\ i,\ \alpha) \]
consists of groups $\Sigma$, $F$ and $G$, a group map $i : F \to G$, and anti-group map 
$\alpha : G \to \operatorname{Aut}(\Sigma)$ such that
 \begin{itemize}
\item[(A1)] $F$ is a subgroup of $\Sigma$, 
\item[(A2)] $\varphi^{i(f)} = f^{-1} \varphi f$ for all $f \in F$, $\varphi \in \Sigma$,
\item[(A3)] $f^g \in F$ and $i(f^g)=g^{-1}i(f)g$, 
\end{itemize}
where $f \in F$,\ $g \in G$,\ $\varphi \in \Sigma$, and $\varphi^g$ stands for $\alpha(g)(\varphi)$.
Suppose that $F$ and $G$ act on $\Sigma$ and $G$, respectively, from the right by inner automorphisms. 
Then (A2) reads that $i$ preserves the actions on $\Sigma$, while (A3) reads that $F$ is $G$-stable,
and $i$ is $G$-equivariant. 

Let $G \ltimes \Sigma$ be the semi-direct product given by $\alpha$, and set 
\[ \Xi = \{ (i(f), f^{-1}) \in G \ltimes \Sigma \mid f \in F \}.   \]
Then one sees from (A2)--(A3) that $\Xi$ is a normal subgroup of $G \ltimes \Sigma$; in particular,
$\Sigma$ centralizes $\Xi$.
We let
\[ \Gamma = \Gamma(\Sigma, F, G, i, \alpha) \]
denote the quotient group $G \ltimes \Sigma/\Xi$. 

\begin{lemma}\label{lem:base_extension}
We have the following.
\begin{itemize}
\item[(1)] The composite $G \to G \ltimes \Sigma\to \Gamma$ of the inclusion with the quotient map
is an injection, through which we will regard $G$ as a subgroup of $\Gamma$. 
\item[(2)] The composite $\Sigma \to G \ltimes \Sigma\to \Gamma$ of the inclusion with the quotient map
induces a bijection $F\backslash \Sigma \to G\backslash \Gamma$ between the sets of right cosets. 
\end{itemize}
\end{lemma}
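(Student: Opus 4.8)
The plan is to work throughout in explicit coordinates on the semidirect product $G \ltimes \Sigma$, whose underlying set is $G \times \Sigma$ with multiplication $(g, \varphi)(g', \varphi') = (gg',\, \varphi^{g'}\varphi')$, identity $(e,e)$, and inverse $(g,\varphi)^{-1} = (g^{-1},\, (\varphi^{-1})^{g^{-1}})$. I regard $G$ and $\Sigma$ as subgroups via $g \mapsto (g, e)$ and $\varphi \mapsto (e,\varphi)$, and write $\pi : G \ltimes \Sigma \to \Gamma$ for the quotient by $\Xi$, which is normal by hypothesis. The single computation I will rely on repeatedly is the \emph{bridge identity}
\begin{equation*}
(i(f),\, f^{-1})\,(e,\, f) = (i(f),\, e) \qquad (f \in F),
\end{equation*}
obtained directly from the multiplication rule. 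Since the left-hand factor lies in $\Xi$, this yields $\pi(e, f) = \pi(i(f), e)$ in $\Gamma$, so that the image of every $f \in F \subseteq \Sigma$ already lies in (the image of) $G$.

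For part (1), the composite $G \to \Gamma$ is the homomorphism $g \mapsto \pi(g, e)$, so it suffices to show its kernel is trivial. If $\pi(g,e)$ is trivial then $(g,e) \in \Xi$, i.e.\ $(g, e) = (i(f), f^{-1})$ for some $f \in F$; comparing the second coordinates forces $f = e$, whence $g = i(e) = e$. This gives the injection, and I identify $G$ with its image in $\Gamma$.

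For part (2), the factorization $(g, \varphi) = (g, e)(e, \varphi)$ shows that every $\gamma = \pi(g,\varphi)$ satisfies $\gamma \in G\,\pi(e,\varphi)$, so the map $\bar\jmath : \Sigma \to G\backslash\Gamma$, $\varphi \mapsto G\,\pi(e,\varphi)$, is surjective; note also that $\varphi \mapsto \pi(e,\varphi)$ is a homomorphism. It then remains to identify the fibres of $\bar\jmath$, i.e.\ to prove that $G\,\pi(e,\varphi) = G\,\pi(e,\varphi')$ holds exactly when $F\varphi = F\varphi'$. Since $\pi(e,-)$ is multiplicative and $G\gamma = G\gamma'$ iff $\gamma'\gamma^{-1}\in G$, this reduces to the key characterization
\begin{equation*}
\pi(e,\psi) \in G \iff \psi \in F \qquad (\psi \in \Sigma).
\end{equation*}
The implication ``$\Leftarrow$'' is exactly the bridge identity. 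For ``$\Rightarrow$'', if $\pi(e,\psi) = \pi(g, e)$ then $(g,e)^{-1}(e,\psi) = (g^{-1}, \psi) \in \Xi$, and comparing the second coordinate with the defining form $(i(f), f^{-1})$ of elements of $\Xi$ forces $\psi = f^{-1} \in F$. Applying the characterization with $\psi = \varphi'\varphi^{-1}$ and unwinding the coset definitions then gives the desired bijection $F\backslash\Sigma \to G\backslash\Gamma$.

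The individual computations are short, so the real work is bookkeeping rather than a single hard obstacle: one must keep the right-action conventions in the multiplication and inverse formulas straight, and correctly match the left/right coset conventions for $F\backslash\Sigma$ and $G\backslash\Gamma$ when translating the membership condition $\psi \in F$ into equality of cosets. The crux is the characterization $\pi(e,\psi)\in G \iff \psi\in F$, and in particular its ``$\Rightarrow$'' direction, where one unwinds the quotient by $\Xi$. The hypotheses (A2)--(A3) enter only through the assumed normality of $\Xi$, which guarantees that $\Gamma$ is a group and that left and right $\Xi$-cosets coincide; the two assertions themselves then follow from the explicit form of $\Xi$ together with the bridge identity.
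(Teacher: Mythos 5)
Your proof is correct: the multiplication, inverse, and bridge-identity computations are all consistent with the right action determined by the anti-homomorphism $\alpha$ (in particular your convention makes $\Sigma$ centralize $\Xi$, as the paper asserts), and the two directions of your characterization $\pi(e,\psi)\in G \iff \psi\in F$ are exactly what part (2) needs. The route differs from the paper's in packaging rather than in substance. The paper chooses a transversal $X$ of $F\backslash\Sigma$, identifies $G\ltimes\Sigma$ with $(G\ltimes F)\times X$ as a left $(G\ltimes F)$-set, observes that $\Xi\subset G\ltimes F$ with $G\overset{\simeq}{\longrightarrow}(G\ltimes F)/\Xi$, and thereby obtains a left $G$-equivariant bijection $G\times X\to\Gamma$ from which both assertions drop out at once; the content of that middle isomorphism is precisely your part (1) (injectivity) together with your bridge identity (surjectivity), and your implication ``$\Rightarrow$'' is what the paper's set-theoretic decomposition encodes implicitly. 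What your version buys is that it is choice-free and isolates the single membership criterion on which everything rests; what the paper's version buys is the explicit $G$-equivariant normal form $\Gamma\simeq G\times X$, which is reused later (e.g.\ in the proof of Proposition \ref{prop:basis}). Either argument is complete.
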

\begin{proof}
Choose arbitrarily a set $X\subset F$ of representatives of $F \backslash \Sigma$. Then the
product map $p : F \times X \to \Sigma$, $p(f,x) = fx$ is a bijection, through which we will identify
$\Sigma$ with $F \times X$. Then we have $G \ltimes \Sigma= (G \ltimes F) \times X$ as left $G \ltimes F$-sets. 
Note $\Xi \subset G\ltimes F$ and that the canonical map 
$G \to G\ltimes F/\Xi=\Xi\backslash G\ltimes F$ is an isomorphism. The direct product with $\mr{id}_X$
gives a left $G$-equivariant bijection,  
$q : G \times X \to (\Xi\backslash G\ltimes F) \times X = \Gamma$. 
The injectivity of $q$ yields Part 1, since one may choose $X$ so as containing the identity element.  
The equivariant bijections $p$ and $q$ induce bijections, 
$\overline{p}: X \to F\backslash \Sigma$,\  
$\overline{q}: X \to G\backslash \Gamma$. 
We see that $\overline{q}\circ \overline{p}^{-1} : 
F\backslash \Sigma \to G\backslash \Gamma$ is the bijection claimed by Part 2.
\end{proof}

Taking into account the property shown in Part 2 above we say:

\begin{definition}\label{def:base_extension}
$\Gamma$ is the \emph{base extension} of $\Sigma$ along $i : F \to (G,\alpha)$.
Here we suppose that $i$ is a morphism of groups acting on $\Sigma$,
bearing in mind the action of $F$ by inner automorphisms.  
\end{definition}

\section{Basic results on super-objects}\label{sec:preliminary}

In what follows we work over a non-zero commutative ring $\Bbbk$. This $\Bbbk$ is supposed
to be arbitrary unless otherwise specified. 

\subsection{Pairings}\label{subsec:pairing}
Recall from Section \ref{subsec:definitions} that $\mathsf{SAlg}_{\Bbbk}$ denotes 
the category of those superalgebras $A$ which are \emph{super-commutative} in the sense
that $A_0$ is central in $A$, and $a^2=0$ for all $a \in A_1$; see \cite[Section 2.1.1]{G}, for example. 
Let $A\in \mathsf{SAlg}_{\Bbbk}$.
An $A$-\emph{supermodule} is a left $A$-module object in $\mathsf{SMod}_{\Bbbk}$; this is
identified with the right $A$-module object which is defined on the same $\Bbbk$-supermodule, say $M$, 
by $ma:=(-1)^{|a||m|}am$, 
$a \in A$, $m\in M$. Given $A$-supermodules $M$, $N$, let 
$M\otimes_A N$ denote the quotient $\Bbbk$-supermodule of $M \otimes N$ defined by the relations
\[ ma \otimes n = m\otimes an, \quad a\in A,\ m\in M,\ n \in N. \]
This is naturally an $A$-supermodule. The $A$-supermodules form a symmetric tensor category
$A\text{-}\mathsf{SMod}$, where the tensor product is the $\otimes_A$ just defined, and the unit object is $A$.
The symmetry is the one induced from the super-symmetry $c_{M,N}$ (see \eqref{eq:super-symmetry}), 
and it will be denoted
by the same symbol. A Hopf superalgebra over $A$
is a Hopf-algebra object in $A\text{-}\mathsf{SMod}$. The structure maps
of a Hopf superalgebra 
$\mathcal{H}$ over $A$ will be denoted by
\[ 
\Delta : \mathcal{H} \to \mathcal{H} \otimes_A \mathcal{H},\; \Delta(h)=h_{(1)}\otimes h_{(2)},\quad 
\varepsilon : \mathcal{H}\to A,\quad 
\mathcal{S} : \mathcal{H}\to \mathcal{H}. 
\]

A \emph{pairing} between objects $M$ and $N$ in $A\text{-}\mathsf{SMod}$ is a morphism 
$M\otimes_A N \to A$ in $A\text{-}\mathsf{SMod}$, 
which will be often presented as $\langle \ , \ \rangle : M \times N \to A$,
$\langle m,\ n \rangle =$~the value of~$m \otimes n$. The \emph{tensor product} with
another pairing $\langle \ , \ \rangle : M' \otimes_A N' \to A$ is the pairing between $M \otimes_A M'$ and
$N \otimes_A N'$ which is defined to be the composite
\[ 
(M \otimes_A M')\otimes_A (N \otimes_A N')
\to
(M\otimes_A N)\otimes_A (M' \otimes_A N')
\to
A\otimes_A A = A 
\]
of $\mathrm{id}_M \otimes_A c_{M',N} \otimes_A \mathrm{id}_{N'}$ with 
$\langle \ , \ \rangle \otimes_A \langle \ , \ \rangle$. Explicitly, it is defined by
\begin{equation}\label{eq:tensor_product_pairing}
\langle m\otimes m',\ n \otimes n'\rangle = (-1)^{|m'||n|} \langle m,\ n\rangle \, \langle m',\ n' \rangle, 
\end{equation}
where $m \in M$, $m'\in M'$, $n \in N$, $n' \in N'$.
We remark that if $A = \Bbbk$, then
the sign $(-1)^{|m'||n|}$ above can be replaced by either  
$(-1)^{|m||n'|}$, $(-1)^{|m||m'|}$ or $(-1)^{|n||n'|}$. 
  
Let $\mathcal{L}$, $\mathcal{H}$ be Hopf superalgebras over $A$. A pairing
$\langle \ , \ \rangle : \mathcal{L}\times \mathcal{H}\to A$ is called a \emph{Hopf pairing}, if we have
\begin{align}
\langle x,\ hk \rangle = \langle \Delta(x),\ h \otimes k\rangle, \quad 
&\langle xy,\ h \rangle =\langle x \otimes y,\ \Delta(h) \rangle, \label{eq:xyhk} \\
\langle x,\ 1 \rangle = \varepsilon(x), \quad & \langle 1,\ h \rangle = \varepsilon(h), \label{eq:x11h}
\end{align}
where $x,y \in \mathcal{L}$,\ $h, k \in \mathcal{H}$. On the right-hand sides of \eqref{eq:xyhk}
appears the tensor product of two copies of the pairing.
The conditions imply
\[ 
\langle \mathcal{S}(x),\ h \rangle = \langle x,\ \mathcal{S}(h) \rangle,\quad 
x \in \mathcal{L},\ h \in \mathcal{H}. 
\]

Just as in the non-super situation, the set  
\[
\mathsf{Gpl}(\mathcal{L}):= \{ g \in \mathcal{L}_0 \mid 
\Delta(g)=g\otimes_A  g,\ \varepsilon(g)=1 \} 
\]
of all even grouplikes in $\mathcal{L}$
is a group under the product of $\mathcal{L}$, and the set 
\[ 
\mathsf{SAlg}_A(\mathcal{H}, A)
\]
of all superalgebra maps $\mathcal{H}\to A$ over $A$
is a group under the convolution product \cite[Page 6]{Mo}. 
Our construction of affine supergroups
is inspired by the following simple fact, which is easy to prove. 

\begin{lemma}\label{lem:group_map}
A Hopf pairing $\langle \ , \ \rangle : \mathcal{L}\times \mathcal{H}\to A$ induces the group map 
\[ \mathsf{Gpl}(\mathcal{L})\to \mathsf{SAlg}_A(\mathcal{H}, A),\quad 
g \mapsto \langle g,\ -\rangle. \]
\end{lemma}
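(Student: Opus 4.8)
The plan is to verify the two defining conditions for membership in the group $\mathsf{SAlg}_A(\mathcal{H}, A)$ for the map $\phi_g := \langle g,\ -\rangle : \mathcal{H}\to A$ attached to a fixed even grouplike $g \in \mathsf{Gpl}(\mathcal{L})$, and then to check that $g\mapsto \phi_g$ is multiplicative. The key observation is that all three verifications are direct consequences of the Hopf-pairing axioms \eqref{eq:xyhk}--\eqref{eq:x11h} once one substitutes $\Delta(g)=g\otimes_A g$ and $\varepsilon(g)=1$.

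\emph{First I would show that each $\phi_g$ is a superalgebra map over $A$.} For the multiplicativity, I would apply the first identity in \eqref{eq:xyhk}, namely $\langle x,\ hk\rangle = \langle \Delta(x),\ h\otimes k\rangle$, with $x=g$. Since $\Delta(g)=g\otimes_A g$ and $g$ is even, the tensor-product pairing formula \eqref{eq:tensor_product_pairing} produces no sign (the factor $(-1)^{|g||h|}$ is trivial because $|g|=0$), giving $\langle g,\ hk\rangle = \langle g,\ h\rangle\langle g,\ k\rangle$, that is $\phi_g(hk)=\phi_g(h)\phi_g(k)$. For the unit, the second identity in \eqref{eq:x11h} with $x=g$ gives $\langle g,\ 1\rangle = \varepsilon(g)=1$, so $\phi_g(1)=1=1_A$. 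Finally, $\phi_g$ is a morphism in $A\text{-}\mathsf{SMod}$ and even (as $|g|=0$), so it is a genuine superalgebra map $\mathcal{H}\to A$ over $A$; this exhibits $\phi_g\in\mathsf{SAlg}_A(\mathcal{H},A)$.

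\emph{Next I would check that $g\mapsto\phi_g$ is a group homomorphism.} The product in $\mathsf{SAlg}_A(\mathcal{H},A)$ is the convolution product, so for $g,h\in\mathsf{Gpl}(\mathcal{L})$ I must show $\phi_{gh}(k)=(\phi_g * \phi_h)(k) = \phi_g(k_{(1)})\phi_h(k_{(2)})$ for all $k\in\mathcal{H}$. Here I would use the second identity in \eqref{eq:xyhk}, $\langle xy,\ k\rangle = \langle x\otimes y,\ \Delta(k)\rangle$, with $x=g$, $y=h$; since $g,h$ are even, \eqref{eq:tensor_product_pairing} again contributes no sign, and the right-hand side is exactly $\langle g,\ k_{(1)}\rangle\langle h,\ k_{(2)}\rangle$, as desired. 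The identity element $1\in\mathsf{Gpl}(\mathcal{L})$ maps to $\langle 1,\ -\rangle=\varepsilon$ by \eqref{eq:x11h}, which is the convolution unit, so the homomorphism is unital; compatibility with products then forces compatibility with inverses automatically.

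I expect no serious obstacle here, consistent with the remark that this fact is easy to prove; the only points requiring a little care are purely bookkeeping. Specifically, one should confirm that the even parity $|g|=0$ genuinely kills the Koszul sign in \eqref{eq:tensor_product_pairing} in both uses, and that $\phi_g$ lands in $A$ as an $A$-linear map rather than merely a $\Bbbk$-linear one, which follows from the pairing being a morphism in $A\text{-}\mathsf{SMod}$. The mildest subtlety is making sure the two pairing axioms are invoked in the correct roles, comultiplication on the $\mathcal{L}$-side governing algebra-homomorphism and multiplication on the $\mathcal{L}$-side governing convolution, but both fall out of \eqref{eq:xyhk} directly.
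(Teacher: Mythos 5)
Your proof is correct and is exactly the routine verification the paper has in mind when it calls this fact ``easy to prove'' and omits the argument: substituting $\Delta(g)=g\otimes_A g$ and $\varepsilon(g)=1$ into the Hopf-pairing axioms, with the Koszul signs in \eqref{eq:tensor_product_pairing} vanishing because grouplikes are even. The only detail worth making explicit is the $A$-linearity of $\phi_g$, which, as you note, follows from the pairing being a morphism $\mathcal{L}\otimes_A\mathcal{H}\to A$ in $A\text{-}\mathsf{SMod}$ together with $|g|=0$.
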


Here is a typical example of Hopf pairings over $\Bbbk$. 

\begin{example}[\text{cf. \eqref{eq:usual_duality},\ \cite[Eq.~(5)]{M2}}]\label{ex:exterior}
Let $W$ be a $\Bbbk$-module which is $\Bbbk$-finite free. We regard the exterior algebra
$\wedge(W)$ on $W$ as a (super-commutative)
Hopf superalgebra in which every element in $W$ is a (square-zero) odd primitive. 
We have another such Hopf superalgebra $\wedge(W^*)$. A Hopf pairing 
$\langle \ , \ \rangle : \wedge(W^*) \times \wedge(W) \to \Bbbk$ is defined by
\begin{equation}\label{eq:duality} 
\langle v_1\wedge \dots \wedge v_n ,\ w_1\wedge \dots \wedge w_m \rangle
= \delta_{n,m} \, (-1)^{\binom{n}{2}}\, \det\big( v_i(w_j) \big),\quad m, n \ge 0, 
\end{equation}
where $v_i \in W^*$, $w_i \in W$. By convention we have
$\binom{0}{2} = \binom{1}{2}=0$. 
\end{example}

\subsection{Comparing dualities}\label{subsec:compare}
In the situation above we suppose $A = \Bbbk$, and consider 
super-objects and pairings over $\Bbbk$. 

Suppose that $\mathcal{H}$ is a super-coalgebra. Then we make the dual $\Bbbk$-supermodule
$\mathcal{H}^*$ uniquely 
into a superalgebra so that the canonical pairing $\mathcal{H}^* \times \mathcal{H} \to \Bbbk$
satisfies the second equations of \eqref{eq:xyhk}, \eqref{eq:x11h}. This is the same as saying that
the pairing
$\mathcal{H} \times \mathcal{H}^* \to \Bbbk$, with the sides switched, 
satisfies the first equations of \eqref{eq:xyhk}, \eqref{eq:x11h}.
The identity of $\mathcal{H}^*$ is the counit of $\mathcal{H}$, and the product is given by 
\[ pq(h) = (-1)^{|p||q|}\, p(h_{(1)})\, q(h_{(2)}), \quad p, q \in \mathcal{H}^*,\ h \in \mathcal{H}. \]
We denote this superalgebra by 
$\mathcal{H}^{\bar{*}}.$

Similarly, if $\mathcal{H}$ is Hopf superalgebra which is $\Bbbk$-finite projective, we make $\mathcal{H}^*$
uniquely into a Hopf superalgebra denoted by $\mathcal{H}^{\bar{*}}$, so that
$\mathcal{H}^* \times \mathcal{H} \to \Bbbk$ or $\mathcal{H} \times \mathcal{H}^* \to \Bbbk$ 
is a Hopf pairing. We call $\mathcal{H}^{\bar{*}}$ the \emph{dual Hopf superalgebra} of $\mathcal{H}$. 
Since the Hopf pairing given in Example \ref{ex:exterior} is non-degenerate, it follows that
the Hopf superalgebras $\wedge(W)$ and $\wedge(W^*)$ are dual to each other. 

Suppose that
\[
\langle \ , \ \rangle : V \times W \to \Bbbk,\quad \langle \ , \ \rangle : V' \times W' \to \Bbbk
\]
are pairings over $\Bbbk$. 
We remark that in the articles \cite{M2, M3, MS}, 
the tensor product of pairings is supposed to be the ordinary one
\[
\langle v\otimes w,\ v'\otimes w'\rangle_{\text{ord}} = \langle v,\ w \rangle\, \langle v',\ w' \rangle,
\]
just as in the non-super situation. 
This is justified since it holds that
$\langle \ , \ \rangle_{\text{ord}} \circ (c_{V,W}\otimes\, \operatorname{id}_{W'\otimes V'}) 
= \langle \ , \ \rangle_{\text{ord}} \circ (\operatorname{id}_{V \otimes W}\otimes\, c_{W',V'})$;
see the proof of \cite[Corollary 3]{M2} and the following remark. 
Over $A \in \mathsf{SAlg}_{\Bbbk}$ in general, this 
is not true any more. Therefore, we chose the definition as in \eqref{eq:tensor_product_pairing},
so that we have
$\langle \ , \ \rangle \circ (c_{M,N} \otimes_A \operatorname{id}_{N'\otimes_A M'})
= \langle \ , \ \rangle \circ (\operatorname{id}_{M \otimes_A N}\otimes_A\, c_{N',M'})$, indeed.
Due to these different choices, the Hopf pairing given by \eqref{eq:duality}
is different from the ordinary one given by \eqref{eq:usual_duality} or \cite[Eq.~(5)]{M2}. 
Note also that the dual (Hopf) superalgebras given above 
are different from those given in the cited articles. 
We are going to clarify this difference.

Let $\Bbbk^{\times}$ denote the multiplicative group of all units in $\Bbbk$, and regard it as a trivial
module over the group $\mathbb{Z}_2 =\{0,1 \}$. Then the map
$\sigma : \mathbb{Z}_2\times \mathbb{Z}_2 \to \Bbbk^{\times}$ defined by
\[
\sigma(i,j) = (-1)^{ij},\quad i, j \in \mathbb{Z}_2
\]
is a $2$-cocycle. Therefore, the identity functor 
\[
\mathsf{SMod}_{\Bbbk} \to \mathsf{SMod}_{\Bbbk},\quad V \mapsto V={}_{\sigma}V
\]
together with the tensor structure
\begin{align}\label{eq:tensor_structure}
{}_{\sigma}V\otimes {}_{\sigma}W \to~~&{}_{\sigma}(V \otimes W),\; 
v \otimes w \mapsto \sigma(|v|,|w|)\, v \otimes w,\\
& \mathrm{id} : \Bbbk \to \Bbbk = {}_{\sigma}\Bbbk \notag
\end{align} 
form a tensor equivalence. One sees that this preserves the super-symmetry, and it is an involution since
$\sigma(i,j)^2=1$,\ $i, j \in \mathbb{Z}_2$.  
It follows that if $\mathcal{H}$ is a super-object, e.g. 
a Hopf superalgebra, over $\Bbbk$, then ${}_{\sigma}\mathcal{H}$ is such an object, and 
${}_{\sigma}({}_{\sigma}\mathcal{H})=\mathcal{H}$. This ${}_{\sigma}\mathcal{H}$ is called the
(\emph{cocycle}) \emph{deformation} of $\mathcal{H}$ by $\sigma$; see \cite[Section 1.1]{M}, for example. 

If $\Bbbk$ contains a square root $\sqrt{-1}$ of $-1$, then $\sigma$ is the coboundary of
\[ \nu : \mathbb{Z}_2 \to \Bbbk^{\times}, \quad \nu(0) = 1,\ \nu(1)= \sqrt{-1}. \] 
It follows that
\[ {}_{\sigma}V \mapsto V, \quad v \mapsto \nu(|v|)\, v \]
gives a natural isomorphism from the tensor equivalence ${}_{\sigma}(\ )$ given by $\sigma$ to
the identity tensor functor, whence the deformation ${}_{\sigma}\mathcal{H}$ by $\sigma$
is naturally isomorphic to the original $\mathcal{H}$, in this case. 

Given two pairings over $\Bbbk$ as above, we have
\begin{align*}
\langle v\otimes w,\ v'\otimes w' \rangle 
&= \langle \sigma(|v|, |w|)\, v \otimes w,\ v' \otimes w'\rangle_{\mathrm{ord}}\\
&= \langle v \otimes w,\ \sigma(|v'|, |w'|)\, v' \otimes w'\rangle_{\mathrm{ord}}; 
\end{align*}
see \eqref{eq:tensor_structure}. 
This shows that the dual (Hopf) superalgebra $\mathcal{H}^{\bar{*}}$ of a super-coalgebra
(or $\Bbbk$-finite projective Hopf superalgebra) $\mathcal{H}$  
coincides with the deformation ${}_{\sigma}(\mathcal{H}^*)$ of the one $\mathcal{H}^*$ 
treated in \cite{M2, M3, MS}. 

\subsection{Base extensions}\label{subsec:base_extension_Hopf}
Let $A \in \mathsf{SAlg}_{\Bbbk}$. Given $V \in \mathsf{SMod}_{\Bbbk}$, we let 
\[
V_A = A \otimes V \in A\text{-}\mathsf{SMod}
\]
denote that base extension to $A$. Given a pairing 
$\langle \ , \ \rangle : V \times W \to \Bbbk$
over $\Bbbk$, we let
\[
\langle \ , \ \rangle_A : V_A \times W_A \to A
\]
denote the base extension to $A$. This is a pairing over $A$. The base extension to $A$
of a Hopf superalgebra over $\Bbbk$ is a Hopf superalgebra over $A$. The base extension to
$A$ of a Hopf pairing over $\Bbbk$ is a Hopf pairing over $A$. 

\subsection{Lie superalgebras}\label{subsec:superLie}
A \emph{Lie superalgebra} (over $\Bbbk$) is an object $\mathfrak{g}$ given a morphism 
$[\ , \ ] : \mathfrak{g} \otimes \mathfrak{g} \to \mathfrak{g}$, both in $\mathsf{SMod}_{\Bbbk}$, 
such that 
\begin{itemize}
\item[(B1)] $[u,u]=0,\ u \in \mathfrak{g}_0$,
\item[(B2)] $[[v,v],v] =0,\ v \in \mathfrak{g}_1$,
\item[(B3)]
$[\ , \ ] \circ (\mathrm{id}_{\mathfrak{g} \otimes \mathfrak{g}} + c_{\mathfrak{g},\mathfrak{g}})= 0$,
\item[(B4)] 
$[[\ , \ ],\ ] \circ (\mathrm{id}_{\mathfrak{g}\otimes \mathfrak{g}\otimes \mathfrak{g}} + 
c_{\mathfrak{g}, \mathfrak{g}\otimes \mathfrak{g}} + c_{\mathfrak{g} \otimes \mathfrak{g}, \mathfrak{g}}) = 0$.
\end{itemize}
In the last two equations, $c_{V,W}:V\otimes W \to W \otimes V$ denotes the super-symmetry \eqref{eq:super-symmetry}. 
We call $[\ , \ ]$ the \emph{super-bracket} of the Lie superalgebra.

As is well known, (B1) ensures the equality (B3) 
restricted to $\mathfrak{g}_0 \otimes \mathfrak{g}_0$. 
Recall that a \emph{Lie algebra} is a $\Bbbk$-module given a bracket which satisfies
(B1) and the Jacobi identity, that is, (B4) in the purely even situation; it is, therefore, 
the same as a purely even Lie superalgebra. It follows
that if $\mathfrak{g}$ is a Lie superalgebra, then $\mathfrak{g}_0$ is a Lie algebra.

Let $\mathfrak{g}$ be a Lie superalgebra. 

A $2$-\emph{operation} \cite[Definition 2.2.1]{G} on $\mathfrak{g}$ is a map  
$(\ )^{\langle 2 \rangle} : \mathfrak{g}_1\to \mathfrak{g}_0$ such that
\begin{itemize}
\item[(B5)] $(\lambda v)^{\langle 2 \rangle} = \lambda^2v^{\langle 2 \rangle}$,
\item[(B6)] $(v+w)^{\langle 2 \rangle} = v^{\langle 2 \rangle} +[v,w]+ w^{\langle 2 \rangle}$,
\item[(B7)] $[v^{\langle 2 \rangle}, z] = [v,[v,z]]$,
\end{itemize}
where $\lambda \in \Bbbk$,\ $v, w \in \mathfrak{g}_1$,\ $z\in \mathfrak{g}$. 

Given $R \in \mathsf{Alg}_{\Bbbk}$, the base extension $\mathfrak{g}_R=R\otimes \mathfrak{g}$ 
is naturally a Lie superalgebra over $R$. 

\begin{lemma}[\text{\cite[Proposition A.3]{MS}}]\label{lem:base_extension_2-operation}
Assume that $\mathfrak{g}_1$ is $\Bbbk$-free.
Given a $2$-operation $(\ )^{\langle 2 \rangle} : \mathfrak{g}_1\to \mathfrak{g}_0$,
there uniquely exists a map 
\[
(\ )_R^{\langle 2 \rangle} : (\mathfrak{g}_1)_R  \to (\mathfrak{g}_0)_R
\]
such that
\[
\big(\sum_{i=1}^n c_i \otimes v_i \big)_R^{\langle 2 \rangle} = 
\sum_{i=1}^n c_i^2 \otimes v_i^{\langle 2 \rangle} + \sum_{i<j}c_ic_j\otimes [v_i, v_j]
\]
for every element $\sum_{i=1}^n c_i \otimes v_i\in R \otimes \mathfrak{g}_1$. 
This $(\ )_R^{\langle 2 \rangle}$ is a $2$-operation on the 
Lie superalgebra $\mathfrak{g}_R$ over $R$. 
\end{lemma}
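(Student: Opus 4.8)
The plan is to recognize the right-hand side of the defining formula as the base change of a quadratic map, and to organize the proof around this. Writing $Q(v) = v^{\langle 2\rangle}$, axioms (B5) and (B6) say precisely that $Q\colon \mathfrak{g}_1 \to \mathfrak{g}_0$ is a quadratic map whose associated symmetric bilinear form is the super-bracket $[\ ,\ ]$ restricted to $\mathfrak{g}_1 \times \mathfrak{g}_1$; here the bracket of two odd elements is symmetric, since (B3) gives $[v,w]=[w,v]$ for $v,w \in \mathfrak{g}_1$. Comparing $(v+v)^{\langle 2\rangle}$ computed by (B6) and by (B5) also yields the identity $[v,v] = 2\,v^{\langle 2\rangle}$, which will be needed below. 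Once $(\ )_R^{\langle 2\rangle}$ exists, uniqueness is immediate, since the formula prescribes its value on every element of $R \otimes \mathfrak{g}_1$; so the whole content lies in existence, that is, in showing the prescribed value is independent of the chosen presentation $\sum_i c_i \otimes v_i$.

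For existence I would use the hypothesis that $\mathfrak{g}_1$ is $\Bbbk$-free: fix a totally ordered basis $\{e_\lambda\}$, so that each element of $R \otimes \mathfrak{g}_1$ has a unique expansion $\sum_\lambda r_\lambda \otimes e_\lambda$, and define
\[
\Big(\sum_\lambda r_\lambda \otimes e_\lambda\Big)_R^{\langle 2\rangle} := \sum_\lambda r_\lambda^2 \otimes e_\lambda^{\langle 2\rangle} + \sum_{\lambda<\mu} r_\lambda r_\mu \otimes [e_\lambda, e_\mu].
\]
This is manifestly well defined; the task is then to verify that it agrees with the stated formula for an arbitrary presentation. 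I would do this by checking that the right-hand side of the stated formula is unchanged under the three elementary moves connecting any presentation to the canonical basis expansion: permuting the terms (harmless because $c_i c_j \otimes [v_i,v_j]$ is symmetric in $i,j$, using commutativity of $R$ and symmetry of odd brackets), rescaling a term via $c \otimes \lambda v = \lambda c \otimes v$ (harmless by (B5)), merging two terms with equal vector via $c \otimes v + c' \otimes v = (c+c')\otimes v$ (harmless by $[v,v]=2v^{\langle 2\rangle}$, which makes the cross term combine the squares into $(c+c')^2$), and splitting a vector $v = v'+v''$ (harmless by (B6), which converts the self-term $c^2 \otimes v^{\langle 2\rangle}$ into two self-terms plus the cross term $c^2 \otimes [v',v'']$). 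Since every presentation reduces to the unique basis expansion under these moves, the value is presentation-independent, proving both existence and the displayed formula. Alternatively one can avoid the bookkeeping of moves and simply substitute $v_i = \sum_\lambda a_{i\lambda}e_\lambda$ and compare coefficients of $e_\lambda^{\langle 2\rangle}$ and $[e_\lambda,e_\mu]$, using the iterated forms of (B5)--(B6).

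It remains to verify (B5)--(B7) over $R$. Axiom (B5) is immediate from the formula, since replacing each $c_i$ by $\rho c_i$ with $\rho \in R$ scales every term by $\rho^2$. Axiom (B6) follows by applying the established formula to the concatenation of presentations of $x$ and $y$: the self-contributions reproduce $x_R^{\langle 2\rangle}$ and $y_R^{\langle 2\rangle}$, while the cross-contributions assemble into the bracket $[x,y]$ in $\mathfrak{g}_R$. The one genuinely computational point is (B7): writing $x = \sum_i c_i\otimes v_i$ and testing against $z = d\otimes u$ (which suffices by $R$-linearity in $z$), the diagonal terms match by (B7) over $\Bbbk$, namely $[v_i^{\langle 2\rangle},u]=[v_i,[v_i,u]]$, while the off-diagonal terms require $[[v_i,v_j],u] = [v_i,[v_j,u]] + [v_j,[v_i,u]]$ for odd $v_i,v_j$, which is exactly the super-Jacobi identity (B4) in the case of two odd arguments.

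I expect the main obstacle to be the existence/well-definedness step: showing the quadratic expression is independent of the presentation is where the freeness of $\mathfrak{g}_1$ and the interplay of (B5), (B6) and the identity $[v,v]=2v^{\langle 2\rangle}$ are all used, and it is the only place where some care is needed to keep the combinatorics of the cross terms straight. The remaining verifications are formal, apart from the single application of super-Jacobi in (B7).
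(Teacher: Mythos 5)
Your proof is correct. The paper gives no proof of this lemma, simply citing \cite[Proposition A.3]{MS}; your argument --- defining $(\ )_R^{\langle 2\rangle}$ via a fixed basis of $\mathfrak{g}_1$, checking independence of the presentation using (B5), (B6) and the identity $[v,v]=2v^{\langle 2\rangle}$, and verifying (B7) over $R$ through the super-Jacobi identity in the odd--odd case --- is the standard one and supplies exactly the content that the citation delegates.
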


Let $\mathfrak{g}$ be a Lie superalgebra equipped with a $2$-operation. The tensor algebra
$\mathbf{T}(\mathfrak{g})$ on $\mathfrak{g}$ uniquely turns into a 
Hopf superalgebra in which every even or odd element of $\mathfrak{g}$ is an even or odd primitive,
respectively. We let $\mathbf{U}(\mathfrak{g})$ denote the quotient Hopf superalgebra of  
$\mathbf{T}(\mathfrak{g})$ divided by the super-ideal
generated by the homogeneous primitives
\begin{equation}\label{eq:generators_of_super-ideal}
zw -(-1)^{|z||w|}wz-[z,w],\quad v^2 - v^{\langle 2 \rangle}, 
\end{equation}
where $z, w \in \mathfrak{g}$, and $v \in \mathfrak{g}_1$.
With the construction applied to $\mathfrak{g}_0$, 
obtained is the universal enveloping algebra $U(\mathfrak{g}_0)$, as is well known.
The inclusion $\mathfrak{g}_0 \to \mathfrak{g}$ induces a Hopf superalgebra map
$U(\mathfrak{g}_0) \to \mathbf{U}(\mathfrak{g})$, through which $\mathbf{U}(\mathfrak{g})$
turns into a left (and right) $U(\mathfrak{g}_0)$-module. 

Given an element $v$ of $\mathfrak{g}$, we will denote
its natural image in $\mathbf{U}(\mathfrak{g})$ by the same symbol $v$. 

\begin{prop}[\text{\cite[Corollary~A.6]{MS}}]\label{prop:PBW}
Let $\mathfrak{g}$ be as above. Assume
\begin{itemize}
\item[(C)] $\mathfrak{g}_0$ is $\Bbbk$-finite projective, and $\mathfrak{g}_1$ is $\Bbbk$-finite free.
\end{itemize}
Choose arbitrarily a $\Bbbk$-free basis $v_1,\dots, v_n$ of $\mathfrak{g}_1$. Then the left
$U(\mathfrak{g}_0)$-module $\mathbf{U}(\mathfrak{g})$ is free with the free basis
\begin{equation}\label{eq:basis_elements}
v_{i_1}v_{i_2}\dots v_{i_r}, 
\end{equation}
where $1 \le i_1 < i_2 < \dots < i_r \le n$,\ $r \ge 0$. 
\end{prop}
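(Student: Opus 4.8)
The plan is to establish the two halves of a PBW statement separately: first that the monomials \eqref{eq:basis_elements} generate $\mathbf{U}(\mathfrak{g})$ as a left $U(\mathfrak{g}_0)$-module, and then that they are linearly independent over $U(\mathfrak{g}_0)$, so that together they form a free basis. Throughout I would use freely that, since $\mathfrak{g}_1$ is $\Bbbk$-free of rank $n$, the exterior algebra $\wedge(\mathfrak{g}_1)$ is $\Bbbk$-free on the ordered wedges $v_I = v_{i_1}\wedge\dots\wedge v_{i_r}$ ($i_1<\dots<i_r$), and that, since $\mathfrak{g}_0$ is $\Bbbk$-projective, the classical PBW theorem describes $U(\mathfrak{g}_0)$ as a well-behaved base ring and makes $\mathfrak{g}_1$ a genuine $U(\mathfrak{g}_0)$-module through the adjoint action $z\mapsto[z,-]$.

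For the spanning half I would argue by a straightening procedure on words in the generators. Given such a word, one first pushes every even generator to the left using $v z = z v - [z,v]$ (valid since $zv-vz=[z,v]$ with $[z,v]\in\mathfrak{g}_1$), which moves an even letter leftward while preserving the number of odd letters; then one bubble-sorts the odd letters by means of $v_i v_j = -v_j v_i + [v_i,v_j]$, where $[v_i,v_j]\in\mathfrak{g}_0$ is pushed to the left and the remaining term has two fewer odd letters; and finally one removes repeated letters via $v_i^2 = v_i^{\langle 2\rangle}\in\mathfrak{g}_0$. Each rewriting strictly decreases a suitable measure (the number of odd letters, then the number of even–odd inversions), so the process terminates and expresses the original element as a $U(\mathfrak{g}_0)$-combination of the ordered squarefree monomials \eqref{eq:basis_elements}.

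For the independence half I would construct a faithful regular-type representation. Form the free left $U(\mathfrak{g}_0)$-module $M = U(\mathfrak{g}_0)\otimes\wedge(\mathfrak{g}_1)$, which has $\Bbbk$-basis indexed by a PBW basis of $U(\mathfrak{g}_0)$ together with the $v_I$. I would make $M$ a left $\mathbf{U}(\mathfrak{g})$-module by specifying a representation $\rho\colon\mathfrak{g}\to\operatorname{End}_{\Bbbk}(M)$ in which even elements act by left multiplication on the $U(\mathfrak{g}_0)$-factor together with their adjoint action on $\wedge(\mathfrak{g}_1)$, and each odd generator acts as exterior multiplication corrected by a contraction/adjoint term, arranged so that evaluating an ordered monomial at $1\otimes 1$ returns $1\otimes v_I$. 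By the universal property of $\mathbf{U}(\mathfrak{g})$ it then suffices to check that $\rho$ annihilates the defining relations \eqref{eq:generators_of_super-ideal}; granting this, the $\Bbbk$-linear map $\xi\mapsto\rho(\xi)(1\otimes 1)$ carries the monomials \eqref{eq:basis_elements} to the free $U(\mathfrak{g}_0)$-basis $\{1\otimes v_I\}$ of $M$, which forces their $U(\mathfrak{g}_0)$-independence and, combined with spanning, proves freeness.

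The main obstacle I anticipate is the well-definedness of $\rho$, and inside it the relation $\rho(v)^2 = \rho(v^{\langle 2\rangle})$ for odd $v$: the exterior-multiplication part squares to zero because $v\wedge v = 0$, so the square collapses onto the cross terms between creation and contraction, and identifying these with the even operator attached to $v^{\langle 2\rangle}$ is precisely where axioms (B5)--(B7) of the $2$-operation enter; similarly the anticommutator relations $\rho(v)\rho(w)+\rho(w)\rho(v)=\rho([v,w])$ rely on (B3)--(B4). I would also be careful that every step is valid over the arbitrary ring $\Bbbk$, where hypothesis (C) is exactly what is needed: $\mathfrak{g}_1$ free makes $\wedge(\mathfrak{g}_1)$ free on the $v_I$, while $\mathfrak{g}_0$ projective supplies the classical PBW basis of $U(\mathfrak{g}_0)$ and the $U(\mathfrak{g}_0)$-module structure on $\mathfrak{g}_1$ used to define the corrective terms in $\rho$.
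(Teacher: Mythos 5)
First, a remark on what you are being compared against: the paper does not prove this proposition at all --- it is quoted from \cite[Corollary A.6]{MS}, so the relevant proof is the one in the appendix of \cite{MS}, which proceeds by equipping $U(\mathfrak{g}_0)\otimes\wedge(\mathfrak{g}_1)$ with an explicit twisted (crossed-product) algebra structure and checking that it satisfies the universal property of $\mathbf{U}(\mathfrak{g})$; that is precisely the left-regular-representation form of the faithful-module argument you propose. Your overall architecture --- spanning by straightening, independence via a representation on $U(\mathfrak{g}_0)\otimes\wedge(\mathfrak{g}_1)$ --- is therefore the standard and correct one, and the spanning half is essentially complete (one small repair: your termination measure needs a third component, say the number of inversions among the odd letters, since an odd--odd swap changes neither the number of odd letters nor the number of even--odd inversions).

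The genuine gap is in the independence half: the representation $\rho$ is never actually constructed, and the one formula you do commit to is inconsistent with the property you need from it. If an even $z$ acts as (left multiplication by $z$ on the first factor) plus $1\otimes\operatorname{ad}(z)$, then $\rho(z)(1\otimes v_i)=z\otimes v_i+1\otimes[z,v_i]$, so the ordered monomial $z\,v_i$ is \emph{not} sent to $z\otimes v_i$. You are forced either to let $z$ act by left multiplication alone --- after which the action of an odd $v$ on $u\otimes\omega$ for nontrivial $u\in U(\mathfrak{g}_0)$ can no longer be ``exterior multiplication plus contraction'' but must absorb the commutation of $v$ past $u$ (e.g.\ $\rho(v_i)(z\otimes 1)=z\otimes v_i-1\otimes[z,v_i]$) --- or to define the action of generators on basis monomials by a recursion on filtration degree. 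In either case, the verification that $\rho$ annihilates all the relations \eqref{eq:generators_of_super-ideal}, in particular $\rho(v)^2=\rho(v^{\langle 2\rangle})$ and the mixed even--odd relations, is where the entire content of a PBW theorem lives, and you have explicitly deferred it (``granting this''). As written, the proposal is a correct plan whose decisive construction is missing; completing it amounts to carrying out the recursive definition with its consistency check or, equivalently, writing down the twisted multiplication on $U(\mathfrak{g}_0)\otimes\wedge(\mathfrak{g}_1)$ and proving associativity, which is exactly what \cite{MS} does.
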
 

It is known that if $\mathfrak{g}_0$ is $\Bbbk$-flat, then the canonical map $\mathfrak{g}_0\to U(\mathfrak{g}_0)$
is injective. Combined with the proposition above, it follows that under (C), the canonical map
$\mathfrak{g} \to \mathbf{U}(\mathfrak{g})$ is injective, and 
$\mathfrak{g}_1 \to \mathbf{U}(\mathfrak{g}) \leftarrow U(\mathfrak{g}_0)$ are $\Bbbk$-linearly split injections. 

\begin{rem}[\text{see \cite[Lemma A.2]{MS}}]\label{rem:2-torsion_free}
(1)\
Assume that $\Bbbk$ is $2$-torsion free in the
sense that $2 : \Bbbk \to \Bbbk$ is injective.
Let $\mathfrak{g}$ be a Lie superalgebra which satisfies 
\begin{itemize}
\item[(C$'$)] $\mathfrak{g}_0$ is $\Bbbk$-flat, and $\mathfrak{g}_1$ is $\Bbbk$-free.
\end{itemize} 
There exists a $2$-operation on $\mathfrak{g}$ if and only if for every $v \in \mathfrak{g}_1$,
the element $[v,v]$ is $2$-divisible in $\mathfrak{g}_0$, that is, it is the double of some
element of $\mathfrak{g}_0$; this last element is uniquely determined, and is 
denoted by $\frac{1}{2}[v,v]$.  
If this is the case, then 
\begin{equation}\label{eq:unique_2-operation}
v^{\langle 2 \rangle} := \frac{1}{2}[v,v], \quad v \in \mathfrak{g}_1
\end{equation}
defines a unique $2$-operation on $\mathfrak{g}$,
and the same result \cite[Proposition 3.4]{MS} as
Proposition \ref{prop:PBW} above is proved. 

(2)\
Assume that $\Bbbk$ is a field of characteristic $\ne 2$. 
Then the results above can apply:\ every Lie superalgebra has the unique $2$-operation defined
by \eqref{eq:unique_2-operation}, and 
one may not refer to such operations any more. 
\end{rem}

Let $\mathbf{G}$ be an affine supergroup, and set $\mathbf{O}=\mathcal{O}(\mathbf{G})$, 
the super-commutative Hopf superalgebra which represents $\mathbf{G}$. 
We let
\[
\mathbf{O}^+ = \operatorname{Ker}(\varepsilon : \mathbf{O}\to \Bbbk)
\]
denote the augmentation super-ideal of $\mathbf{O}$. The \emph{Lie superalgebra} $\operatorname{Lie}(\mathbf{G})$
of $\mathbf{G}$ is defined by
\[ 
\operatorname{Lie}(\mathbf{G})=(\mathbf{O}^+/(\mathbf{O}^+)^2)^*
\] 
as an object in $\mathsf{SMod}_{\Bbbk}$. We suppose
\[ 
\operatorname{Lie}(\mathbf{G})\subset \mathbf{O}^{\bar{*}}.
\] 
Indeed, $\operatorname{Lie}(\mathbf{G})$ is identified with the $\Bbbk$-super-submodule 
of $\mathbf{O}^{\bar{*}}$
which consists of the elements $z$ of $\mathbf{O}^{\bar{*}}$ such that
\[
z(hk) = z(h)\varepsilon(k) + \varepsilon(h)z(k),\quad h, k \in \mathbf{O}. 
\]

\begin{lemma}[\text{\cite[Proposition A.7]{MS}}]\label{lem:Lie(G)} 
$\operatorname{Lie}(\mathbf{G})$ is a Lie superalgebra under the super-bracket
\[
[z,w] := zw- (-1)^{|z||w|} wz,\quad z, w \in \operatorname{Lie}(\mathbf{G}),
\]
and a $2$-operation on $\operatorname{Lie}(\mathbf{G})$ is given by the square map
\[
(\ )^2 : \operatorname{Lie}(\mathbf{G})_1 \to \operatorname{Lie}(\mathbf{G})_0, \quad v \mapsto v^2.
\]
Here the products $zw$, $wz$ and the square $v^2$ are computed in $\mathbf{O}^{\bar{*}}$. 
\end{lemma}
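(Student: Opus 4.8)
The plan is to exploit the fact that, by construction, $\mathbf{O}^{\bar{*}}$ is an \emph{associative} superalgebra, with product $pq(h) = (-1)^{|p||q|}\,p(h_{(1)})\,q(h_{(2)})$, and that $\operatorname{Lie}(\mathbf{G})$ sits inside it as the set of $\varepsilon$-derivations, i.e.\ the homogeneous $z \in \mathbf{O}^{\bar{*}}$ with $z(hk) = z(h)\varepsilon(k) + \varepsilon(h)z(k)$; equivalently (since $z(1)=0$ then follows) the functionals vanishing on $\Bbbk 1 \oplus (\mathbf{O}^+)^2$, which matches the description $\operatorname{Lie}(\mathbf{G}) = (\mathbf{O}^+/(\mathbf{O}^+)^2)^*$. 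The work then splits into two independent parts: first, the \textbf{closure} statement that the super-commutator of two primitives is again primitive and that the square of an odd primitive is an even primitive (this is precisely what makes the bracket and the square map land back in $\operatorname{Lie}(\mathbf{G})$); second, the verification of the axioms (B1)--(B7), which will follow from formal identities valid in any associative superalgebra.

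For the closure step I would compute $(zw)(hk)$ directly. Since $\Delta$ is a morphism of superalgebras, $\Delta(hk) = (-1)^{|h_{(2)}||k_{(1)}|}\, h_{(1)}k_{(1)} \otimes h_{(2)}k_{(2)}$; feeding this into the product formula and applying the derivation property of $z$ and $w$ to $z(h_{(1)}k_{(1)})$ and $w(h_{(2)}k_{(2)})$ produces four terms. After contracting the counits, two of them assemble into the expected derivation terms $(zw)(h)\varepsilon(k) + \varepsilon(h)(zw)(k)$, while the remaining two are the ``cross terms'' $(-1)^{|z||w|}z(h)w(k)$ and $w(h)z(k)$. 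Forming $[z,w] = zw - (-1)^{|z||w|}wz$, the cross terms cancel in pairs, so $[z,w]$ is again a derivation; specializing to $z=w=v$ with $v$ odd, the two cross terms cancel because of the sign $(-1)^{|v||v|}=-1$, so $v^2$ is a derivation, and it is necessarily even. This is exactly the place where the Koszul sign coming from $\Delta(hk)$ must be tracked carefully, and I expect this bookkeeping --- confirming that the cross terms really cancel with the correct signs --- to be the main (and essentially the only) obstacle.

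With closure in hand, the Lie-superalgebra and $2$-operation axioms are formal consequences of the associativity of $\mathbf{O}^{\bar{*}}$. Axiom (B1) is immediate, and (B3) follows directly from the definition of the super-commutator together with the symmetry of the sign $(-1)^{|z||w|}$; (B2) and the super-Jacobi identity (B4) are the standard graded-commutator computations. For the square map, (B5) is clear, while (B6) follows by expanding $(v+w)^2 = v^2 + (vw + wv) + w^2$ and noting that $[v,w] = vw + wv$ for odd $v,w$; finally (B7) reduces, via a short associativity computation, to the identity $[v^2,z] = v^2 z - z v^2 = [v,[v,z]]$. Since none of these computations requires inverting $2$, the argument is valid over the arbitrary base ring $\Bbbk$, as required; note also the compatibility $[v,v] = 2v^2 = 2v^{\langle 2\rangle}$ with Remark \ref{rem:2-torsion_free} in the case that $\Bbbk$ is $2$-torsion free.
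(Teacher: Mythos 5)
Your proof is correct and complete. Note that the paper itself offers no proof of this lemma: it is quoted verbatim with a citation to \cite[Proposition~A.7]{MS}, so there is no in-paper argument to compare against. Your route --- viewing $\operatorname{Lie}(\mathbf{G})$ as the $\varepsilon$-derivations inside the associative superalgebra $\mathbf{O}^{\bar{*}}$, establishing closure of the super-commutator and of the squaring of odd elements by the four-term expansion of $(zw)(hk)$, and then reducing (B1)--(B7) to formal graded-commutator identities --- is the standard one and is surely what the cited reference does. Your sign bookkeeping is right: on the support of the relevant terms the Koszul sign $(-1)^{|h_{(2)}||k_{(1)}|}$ coming from $\Delta(hk)$ collapses to $(-1)^{|z||w|}$ exactly in the one cross term where it matters, so the cross terms are $(-1)^{|z||w|}z(h)w(k)+w(h)z(k)$ and cancel in $[z,w]$, and cancel already in $v^2$ when $v$ is odd. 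One small point worth appreciating: the cited \cite[Proposition~A.7]{MS} is stated for the undeformed dual $\mathbf{O}^*$, whereas this paper uses $\mathbf{O}^{\bar{*}}={}_{\sigma}(\mathbf{O}^*)$ (see Remark~\ref{rem:deform_Lie}), so strictly speaking the citation delivers the lemma only after applying the cocycle deformation; your direct verification in the paper's own convention sidesteps that translation entirely, and correctly avoids any division by $2$, as required over an arbitrary base ring.
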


We define $\operatorname{Lie}(\mathbf{G})$ to be the Lie superalgebra equipped with the $2$-operation, as above.
One sees that $\operatorname{Lie}$ then gives a functor from the category of affine supergroups
to the category of Lie superalgebras equipped with $2$-operation. A morphism of the latter category 
is a morphism in $\mathsf{SMod}_{\Bbbk}$ which preserves the super-bracket and the $2$-operation. 

\begin{rem}\label{rem:deform_Lie}
Let $\mathfrak{g}$ be a Lie superalgebra. Note from Section \ref{subsec:compare} that the deformation 
${}_{\sigma}\mathfrak{g}$ by $\sigma$ is the object $\mathfrak{g}$ in $\mathsf{SMod}_{\Bbbk}$ 
which is given the super-bracket
\[
{}_{\sigma}[z, w] := (-1)^{|z||w|}[z, w],\quad z, w \in \mathfrak{g}
\]
deformed from the original super-bracket $[z, w]$. If $\mathfrak{g}$ is equipped with a $2$-operation, we suppose
that ${}_{\sigma}\mathfrak{g}$ is equipped with the deformed $2$-operation
\[
v^{\, {}_{\sigma}\hspace{-0.5mm}\langle 2 \rangle} := - v^{\langle 2 \rangle},\quad v \in\mathfrak{g}_1. 
\]
This indeed defines a $2$-operation on ${}_{\sigma}\mathfrak{g}$, as is easily seen. 

Let $\mathbf{G}$ be an affine supergroup. As is seen from the last paragraph of Section \ref{subsec:compare},
the definition of $\operatorname{Lie}(\mathbf{G})$ above 
is different from the one in
\cite[Appendix]{MS}. In fact, the two $\operatorname{Lie}(\mathbf{G})$
are the deformations of each other by $\sigma$. 
\end{rem}

\subsection{$G$-supermodules}\label{subsec:G-supermodule}
Let $G$ be an affine group, and set $O = \mathcal{O}(G)$.
  
Let $V \in \mathsf{SMod}_{\Bbbk}$. 
A \emph{right} $G$-\emph{supermodule structure} on $V$ is an anti-morphism from $G$ to the
group functor $\mathsf{GL}(V)$ which assigns to each $R \in \mathsf{Alg}_{\Bbbk}$, the group 
$\mathrm{GL}_R(V_R)$ of all $R$-super-linear automorphisms of $V_R$. Such a structure, say $\alpha$, 
arises uniquely from a left $O$-super-comodule structure
\begin{equation}\label{eq:rho}
\rho_{\alpha} : V \to O \otimes V,\quad \rho_{\alpha}(v) = v_{(-1)}\otimes v_{(0)} 
\end{equation}
so that 
\[ 
\alpha_R(g)(1 \otimes v)= g(v_{(-1)}) \otimes v_{(0)},\quad v \in V,\ g \in G(R), 
\]
where $R \in \mathsf{Alg}_{\Bbbk}$. Note that $\rho_{\alpha}(v)=\alpha_O(\mathrm{id}_O)(1 \otimes v)$. 
We will write $v^g$ for $\alpha_R(g)(1 \otimes v)$, and $u^g$ for $\alpha_R(g)(u)$, $u \in V_R$. 

Similarly, for left $G$-supermodule structures we will write as ${}^gw$. 

Let $W \in \mathsf{SMod}_{\Bbbk}$, and suppose that it is $\Bbbk$-finite projective. 
A left $G$-supermodule structure on $W$ is \emph{transposed} to $W^*$ so that
\begin{equation}\label{eq:transpose}
v^{g}(w):= v({}^g w),\quad v \in W^*,\ w \in W,\ g \in G(R). 
\end{equation}
This defines a right $G$-supermodule structure on $W^*$. Here we understand that the $v$ in the
equation represents the element $1 \otimes v \in (W^*)_R$ or its image through
the canonical isomorphism $(W^*)_R\simeq \operatorname{Hom}_R(W_R, R)$. 

The right (or left) $G$-supermodules form a symmetric tensor category with respect to the tensor
product $\otimes$, the unit object $\Bbbk$ and the super-symmetry. 

\section{Construction of affine supergroups based on Gavarini's idea}\label{sec:construction}

In this section the base commutative ring $\Bbbk$ remains arbitrary.

\subsection{The group $\mathbf{\Sigma}(A)$}\label{subsec:PhiA}
Let $\mathfrak{g}$ be a Lie superalgebra which satisfies (C); see Proposition \ref{prop:PBW}. 
Suppose that it is equipped with a $2$-operation. 

Let $A\in \mathsf{SAlg}_{\Bbbk}$. 
We have the group $\mathsf{Gpl}(\mathbf{U}(\mathfrak{g})_A)$ of all even grouplikes in
the Hopf superalgebra $\mathbf{U}(\mathfrak{g})_A = A \otimes \mathbf{U}(\mathfrak{g})$ over $A$. 
As is seen from the paragraph following Proposition \ref{prop:PBW}, the canonical maps 
\[ A_0 \otimes \mathfrak{g}_0 \to A \otimes U(\mathfrak{g}_0) \to 
A \otimes \mathbf{U}(\mathfrak{g}) \leftarrow A \otimes \mathfrak{g}_1 \]
are all injections, which we will regard as inclusions. We define even elements 
$e(a,v)$, $f(\epsilon, x)$ of $A \otimes \mathbf{U}(\mathfrak{g})$ by
\begin{equation}\label{eq:ef}
e(a,v)=1 \otimes 1 + a\otimes v,\quad f(\epsilon, x)= 1\otimes 1+ \epsilon \otimes x,
\end{equation}
where  $a \in A_1$, $v \in \mathfrak{g}_1$, $x \in \mathfrak{g}_0$, and $\epsilon \in A_0$ with
$\epsilon^2=0$. 
Note that $e(\lambda a,v)=e(a,\lambda v)$, $f(\lambda\epsilon, x)=f(\epsilon, \lambda x)$ for $\lambda \in \Bbbk$. 

\begin{lemma}\label{lem:e(a,v)}
The elements $e(a,v)$,\ $f(\epsilon, x)$ are contained 
in $\mathsf{Gpl}(\mathbf{U}(\mathfrak{g})_A)$, and we have
\[ e(a, v)^{-1}= e(-a,v),\quad f(\epsilon, x)^{-1} = f(-\epsilon, x),\quad 
e(0, v) = 1 = f(0, x). \]
\end{lemma}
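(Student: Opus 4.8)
The plan is to verify directly that each of $e(a,v)$ and $f(\epsilon,x)$ is an even grouplike, using the Hopf superalgebra structure of $\mathbf{U}(\mathfrak{g})_A = A \otimes \mathbf{U}(\mathfrak{g})$ over $A$, and then to read off the inverse and identity formulas from the grouplike condition. First I would record the structure maps: since every $v \in \mathfrak{g}_1$ and $x \in \mathfrak{g}_0$ maps to a primitive in $\mathbf{U}(\mathfrak{g})$, we have $\Delta(v) = v \otimes_A 1 + 1 \otimes_A v$ and $\varepsilon(v) = 0$ after base extension to $A$ (and likewise for $x$). The element $e(a,v) = 1 \otimes 1 + a \otimes v$ is even because $a \in A_1$ is odd and $v \in \mathfrak{g}_1$ is odd, so $a \otimes v$ lies in degree $0$; similarly $f(\epsilon,x)$ is even since both $\epsilon$ and $x$ are even.

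The main computation is the grouplike condition $\Delta(e(a,v)) = e(a,v) \otimes_A e(a,v)$. Writing $g = e(a,v)$, the right-hand side expands to
\[
(1\otimes_A 1) + (a\otimes v)\otimes_A(1\otimes 1) + (1\otimes 1)\otimes_A(a\otimes v) + (a\otimes v)\otimes_A(a\otimes v),
\]
while the left-hand side, using $A$-linearity and primitivity of $v$, gives $\Delta(a\otimes v) = a\otimes(v\otimes_A 1 + 1\otimes_A v)$. The key point — and the step I expect to be the only real subtlety — is the cross term $(a\otimes v)\otimes_A(a\otimes v)$: since $a \in A_1$ satisfies $a^2 = 0$ by super-commutativity of $A$, and equally $v^2 = v^{\langle 2\rangle}$ is even while $a^2 = 0$ kills it, this term vanishes. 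Here one must be careful to track the Koszul sign incurred when moving the odd $a$ past the odd $v$ across the $\otimes_A$ (the super-symmetry $c$ of \eqref{eq:super-symmetry}), but whatever sign appears multiplies $a^2 = 0$ and so is irrelevant. Thus the quadratic term drops out and both sides agree. The counit condition $\varepsilon(e(a,v)) = 1$ is immediate from $\varepsilon(1\otimes 1)=1$ and $\varepsilon(a\otimes v) = a\,\varepsilon(v) = 0$.

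The verification for $f(\epsilon,x) = 1\otimes 1 + \epsilon\otimes x$ is formally identical, the vanishing of the cross term now coming from the hypothesis $\epsilon^2 = 0$ (rather than from oddness). With both elements confirmed to lie in $\mathsf{Gpl}(\mathbf{U}(\mathfrak{g})_A)$, the inverse formulas follow by a direct check: computing $e(a,v)\,e(-a,v) = (1 + a\otimes v)(1 - a\otimes v) = 1 - (a\otimes v)^2 = 1$ again uses $a^2 = 0$, and symmetrically for $e(-a,v)\,e(a,v)$; the same one-line computation with $\epsilon^2 = 0$ gives $f(\epsilon,x)^{-1} = f(-\epsilon,x)$. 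Finally $e(0,v) = 1\otimes 1 = 1$ and $f(0,x) = 1\otimes 1 = 1$ are evident from the defining formulas \eqref{eq:ef}. The whole argument is routine once the square-zero hypotheses are exploited; there is no genuine obstacle, only the bookkeeping of signs in the super-symmetry, which the square-zero conditions render harmless.
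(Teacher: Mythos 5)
Your proposal is correct and is essentially the paper's own argument spelled out in detail: the paper's one-line proof says exactly that $a\otimes v$ and $\epsilon\otimes x$ are even primitives with $A$-tensor-square zero (forced by $a^2=0$, resp.\ $\epsilon^2=0$), which is the same square-zero mechanism you verify for both the grouplike condition and the inverse formulas. (Minor quibble: in the coproduct computation the vanishing cross term is $\pm\,a^2\otimes(v\otimes_A v)$, so $v^2$ plays no role there; but as you note, everything is annihilated by $a^2=0$ regardless.)
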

\begin{proof}
This follows since $a \otimes v$ and $\epsilon \otimes x$ are such even primitives $z$ 
that are tensor-square zero, $z\otimes_A z=0$. 
\end{proof}

\begin{lemma}\label{lem:relations}
Let $a, b \in A_1$,\ $u, v \in \mathfrak{g}_1$,\ $x, y \in \mathfrak{g}_0$,\ and $\epsilon, \eta \in A_0$
with $\epsilon^2=\eta^2=0$. Then  
the following relations hold in $\mathsf{Gpl}(\mathbf{U}(\mathfrak{g})_A)$. 
\begin{itemize}
\item[(i)] $e(a,u)\, e(b,v) = f(-ab,[u,v])\, e(b,v)\, e(a,u)$
\item[(ii)] $e(a, v)\, e(b, v) = f(-ab,v^{\langle 2 \rangle})\, e(a+b, v)$
\item[(iii)] $e(a, v)\, f(\epsilon, x)= f(\epsilon, x)\, e(a, v)\, e(\epsilon a, [v,x])$ 
\item[(iv)] $f(\epsilon, x)\, f(\eta, y) = f(\eta, y)\, f(\epsilon, x)\, f(\epsilon \eta, [x,y])$
\end{itemize}
\end{lemma}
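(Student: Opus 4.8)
The plan is to verify each of (i)--(iv) as an equality in the associative superalgebra $A\otimes\mathbf{U}(\mathfrak{g})$, whose multiplication is governed by the Koszul sign rule $(a\otimes s)(b\otimes t)=(-1)^{|s||b|}\,ab\otimes st$. Every factor occurring in the four relations is of the shape $1\otimes 1+z$ with $z$ an even primitive satisfying $z\otimes_A z=0$ (Lemma \ref{lem:e(a,v)}), so the product of two such factors is $1\otimes 1$ plus the two linear terms plus a single quadratic term; all of the content lies in that quadratic term. To evaluate it I would use exactly two sets of facts: the super-commutativity of $A$ (the odd elements $a,b$ anticommute and square to zero, the even elements $\epsilon,\eta$ are central and square to zero) and the defining relations of $\mathbf{U}(\mathfrak{g})$, namely $sw-(-1)^{|s||w|}ws=[s,w]$ and $v^2=v^{\langle 2\rangle}$.

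I would treat (i) as the model case. The left-hand side expands to $e(a,u)\,e(b,v)=1\otimes 1+a\otimes u+b\otimes v-ab\otimes uv$, since $(a\otimes u)(b\otimes v)=-ab\otimes uv$; similarly $e(b,v)\,e(a,u)=1\otimes 1+a\otimes u+b\otimes v+ab\otimes vu$, using $ba=-ab$. Multiplying the latter on the left by $f(-ab,[u,v])=1\otimes 1-ab\otimes[u,v]$, every cross-term carrying the even scalar $ab$ against a further factor of $a$, $b$, or $ab$ vanishes, because $a^2=b^2=(ab)^2=0$. What survives is $1\otimes 1+a\otimes u+b\otimes v+ab\otimes vu-ab\otimes[u,v]$, and the relation $uv+vu=[u,v]$ turns $vu-[u,v]$ into $-uv$, reproducing the left-hand side exactly.

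The remaining three identities run on the same mechanism, differing only in which relation of $\mathbf{U}(\mathfrak{g})$ enters the quadratic term: (ii) uses $v^2=v^{\langle 2\rangle}$ in $(a\otimes v)(b\otimes v)=-ab\otimes v^{\langle 2\rangle}$; (iii) uses $[v,x]=vx-xv$; and (iv) uses $[x,y]=xy-yx$. In each of (iii) and (iv) the extra grouplike factor on the right introduces a new even square-zero scalar ($\epsilon a$, respectively $\epsilon\eta$), and the point is again that every cross-term in the resulting triple product dies by one of $a^2=0$, $\epsilon^2=0$, $\eta^2=0$, $(\epsilon a)^2=0$, $(\epsilon\eta)^2=0$, leaving precisely the term that recombines with the existing quadratic contribution via the relevant commutator relation.

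There is no genuine conceptual obstacle here: the computations use only associativity, the sign rule, the square-zero conditions, and the two families of relations, all of which are available over the arbitrary base ring $\Bbbk$ (so nothing forces us to invert $2$ or to pass to a field). The only real care needed is bookkeeping, namely tracking the Koszul signs correctly and confirming that each unwanted cross-term is annihilated by one of the square-zero identities.
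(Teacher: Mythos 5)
Your proposal is correct, and it is precisely the ``direct computation'' that the paper's one-line proof alludes to: expanding each product of grouplikes $1\otimes 1+z$, killing the cross-terms via the square-zero identities in $A$, and recombining the surviving quadratic term using the defining relations $zw-(-1)^{|z||w|}wz=[z,w]$ and $v^2=v^{\langle 2\rangle}$ of $\mathbf{U}(\mathfrak{g})$. The Koszul signs in your sample computation of (i) check out, so nothing further is needed.
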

\begin{proof}
These follow by direct computation. 
\end{proof}

In particular, $e(a,u)$ and $e(b,v)$ (resp., $e(a, v)$ and $f(\epsilon, x)$;\ resp.,  
$f(\epsilon, x)$ and $f(\eta, y)$) commute with each other if $ab=0$ or $[u,v]=0$ 
(resp., if $\epsilon a=0$ or $[v,x]=0$;\ resp., if $\epsilon \eta=0$ or $[x,y]=0$). 

Let 
$\mathbf{\Sigma}(A)$
denote the subgroups of $\mathsf{Gpl}(\mathbf{U}(\mathfrak{g})_A)$ generated by all the elements
$e(a,v)$, $f(\epsilon, x)$ defined by \eqref{eq:ef}. Let
$F(A_0)$
denote the subgroup of $\mathbf{\Sigma}(A)$ generated by all $f(\epsilon, x)$.  

\begin{prop}\label{prop:PhiA}
We have the following.
\begin{itemize}
\item[(1)] $F(A_0)= \mathbf{\Sigma}(A)\cap U(\mathfrak{g}_0)_{A_0}$. 
\item[(2)] Choose arbitrarily a $\Bbbk$-free basis $v_1,\dots, v_n$ of $\mathfrak{g}_1$. Then
every element of $\mathbf{\Sigma}(A)$ is uniquely expressed in the form
\begin{equation}\label{eq:expression}
f\, e(a_1,v_1)\, e(a_2,v_2)\dots e(a_n,v_n), 
\end{equation}
where $f \in F(A_0)$, and $a_i\in A_1$, $1\le i \le n$. 
\end{itemize}
\end{prop}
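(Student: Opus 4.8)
The plan is to prove existence of the expression \eqref{eq:expression} by a straightening procedure driven by the relations of Lemma \ref{lem:relations}, and then to read off both uniqueness and Part (1) from the PBW decomposition of Proposition \ref{prop:PBW}.

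For existence, let $N \subseteq \mathbf{\Sigma}(A)$ denote the set of all products of the form \eqref{eq:expression}. Since $1 \in N$ and, by Lemma \ref{lem:e(a,v)}, the inverses of the generators $e(a,v)$ and $f(\epsilon,x)$ are again generators of the same respective types, it suffices to show that $N$ is stable under right multiplication by each generator; then $N = \mathbf{\Sigma}(A)$. Given $w = f\, e(a_1,v_1)\cdots e(a_n,v_n) \in N$, I would compute $w\, e(b, v_k)$ by using relation (i) to move $e(b, v_k)$ leftward past $e(a_{k+1}, v_{k+1}), \dots, e(a_n, v_n)$ and relation (ii) to merge it with $e(a_k, v_k)$; each application of (i) or (ii) deposits a factor in $F(A_0)$ (its coefficient, a product of two odd elements, is square-zero and its argument lies in $\mathfrak{g}_0$), which I would then push to the far left by relation (iii) and absorb into $f$ via the group structure of $F(A_0)$ and relation (iv). Right multiplication by $f(\eta, y)$ is handled in the same spirit: relation (iii) moves it leftward past all the $e(a_i, v_i)$, each passage spawning an extra $e$-factor. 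In both cases the spawned factor $e(\epsilon a, [v,x])$ has an odd argument $[v,x] \in \mathfrak{g}_1$, which I would re-expand in the basis and split into standard generators using the identity $e(a, \sum_k c_k v_k) = \prod_k e(c_k a, v_k)$, valid because $a^2 = 0$ annihilates the cross terms; these new generators must then be straightened in turn.

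The crux—and the main obstacle—is to see that this straightening terminates. The point is that every factor spawned by (i)--(iv) carries a coefficient equal to a product of the coefficients of the two factors producing it. Hence all coefficients occurring throughout the process lie in the non-unital subalgebra $B \subseteq A$ generated by the finite set $\mathcal{C}$ of coefficients of the finitely many generators in the initial word. Because $A$ is super-commutative, every element of $\mathcal{C}$ squares to zero, so $B$ is spanned by square-free monomials in $\mathcal{C}$ and is therefore nilpotent, say $B^{D+1} = 0$. Since a newly spawned factor lies one $B$-power deeper than its parents, and any factor whose coefficient lands in $B^{D+1} = 0$ is trivial ($e(0,\cdot) = 1 = f(0,\cdot)$), I would organize the straightening as an induction on nilpotency degree: after finitely many rounds every freshly created factor vanishes, leaving a word in normal form. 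This is the one step requiring genuine care.

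Finally, for uniqueness and Part (1) I would pass to the PBW picture. By Proposition \ref{prop:PBW}, $\mathbf{U}(\mathfrak{g})_A$ is free as a left $U(\mathfrak{g}_0)_A$-module on the ordered monomials $v_S = v_{i_1}\cdots v_{i_r}$, and left multiplication by an element of $U(\mathfrak{g}_0)_A$ acts coefficientwise on this basis. Expanding $e(a_1,v_1)\cdots e(a_n,v_n) = \sum_S (\pm)\, a_S \otimes v_S$ with $a_S = \prod_{i \in S} a_i$, the element $g = f\, e(a_1,v_1)\cdots e(a_n,v_n)$ has $v_\emptyset$-coefficient equal to $f$ and $v_{\{i\}}$-coefficient equal to $\pm f a_i$. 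Since $f \in F(A_0) \subseteq U(\mathfrak{g}_0)_{A_0}$ is an even grouplike it is invertible (with $\mathcal{S}(f)$ its inverse, again in $U(\mathfrak{g}_0)_{A_0}$), and the canonical map $A \to A \otimes U(\mathfrak{g}_0)$, $a \mapsto a \otimes 1$, is a split injection; reading off these two coefficients therefore recovers $f$, and then each $a_i$, uniquely from $g$, which gives the uniqueness in Part (2). The inclusion $F(A_0) \subseteq \mathbf{\Sigma}(A) \cap U(\mathfrak{g}_0)_{A_0}$ of Part (1) is immediate, since each $f(\epsilon,x) \in A_0 \otimes U(\mathfrak{g}_0)$; conversely, if $g \in \mathbf{\Sigma}(A) \cap U(\mathfrak{g}_0)_{A_0}$ then its expression \eqref{eq:expression} has vanishing $v_{\{i\}}$-coefficients, whence $f a_i = 0$ and so $a_i = 0$ for all $i$, giving $g = f \in F(A_0)$.
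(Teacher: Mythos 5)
Your proposal is correct and follows essentially the same route as the paper's proof: existence of the normal form by straightening with the relations (i)--(iv) of Lemma \ref{lem:relations} (after splitting $e(a,v)$ along the chosen basis of $\mathfrak{g}_1$, using $a^2=0$), and uniqueness together with Part (1) by comparing the coefficients of $1\otimes 1$ and $1\otimes v_i$ with respect to the left $U(\mathfrak{g}_0)_A$-free PBW basis of Proposition \ref{prop:PBW}, the invertibility of the grouplike $f$ then giving $f=f'$ and $a_i=a'_i$. The one point of genuine divergence is how termination of the straightening is justified: the paper assigns to each product of generators a word in $\{0,1,\dots,n\}$ and argues that each rewrite strictly decreases the word in the lexicographic order, whereas you bound the process by the nilpotency of the non-unital subalgebra generated by the finitely many square-zero coefficients in play, so that every newly spawned factor $f(-ab,\cdot)$ or $e(\epsilon a,\cdot)$ sits strictly deeper in the powers of that subalgebra and eventually becomes trivial. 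Your device is, if anything, the more robust of the two, since the rewrites of (i) and (iii) lengthen words and the lexicographic order on words of unbounded length is not literally a well-order; the nilpotency of the coefficients is what really forces the recursion to stop, and you correctly identify this as the step needing care. I see no gap.
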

\begin{proof}
First, we prove the uniqueness of expression in Part 2. 
Note that $F(A_0) \subset A \otimes U(\mathfrak{g}_0)$. By Proposition \ref{prop:PBW} 
$\mathbf{U}(\mathfrak{g})_A$ has the elements given by \eqref{eq:basis_elements} as left
$A \otimes U(\mathfrak{g}_0)$-free basis. Suppose that one element has two expressions, 
$f\, e(a_1,v_1) \dots e(a_n,v_n)$,\ 
$f'\, e(a'_1,v_1) \dots e(a'_n,v_n)$. Then the comparison of
coefficients of the free basis elements $1\otimes 1$, $1\otimes v_i$ shows that $f=f'$, $fa_i=f'a'_i$, and so
$a_i=a'_i$, which proves the uniqueness. 

The argument also shows that the element 
$f\, e(a_1,v_1) \dots e(a_n,v_n)$ 
is in $A \otimes U(\mathfrak{g}_0)$ if and only if $a_i=0$, $1\le i\le n$. 
Therefore, once the possibility of expression in Part 2 is shown, Part 1 follows. 
\end{proof}

The proof of the last mentioned possibility which was given in an earlier version of this paper was wrong. 
Alexandr Zubkov kindly pointed out this, 
showing a correct proof which is reproduced essentially as follows. 

\begin{lemma}[A.~Zubkov]\label{lem:Zubkov}
In the situation of Proposition \ref{prop:PhiA}, choose arbitrarily
a super-ideal $\mathfrak{a}$ of $A$. For each integer $k \ge 0$, let $\mathbf{\Sigma}_k$ (resp., $F_k$) denote
the subgroup of $\mathbf{\Sigma}(A)$ (resp., of $F(A_0)$) which is generated by the elements  
$e(a, v)$ and $f(\epsilon, x)$ (resp., the elements $f(\epsilon, x)$), where $a \in \mathfrak{a}^k\cap A_1$,
$v \in \mathfrak{g}_1$, $x \in \mathfrak{g}_0$,
and $\epsilon \in \mathfrak{a}^k\cap A_0$ with $\epsilon^2=0$.
\begin{itemize}
\item[(1)] We have \[ [\mathbf{\Sigma}_k,\mathbf{\Sigma}_\ell]\subset \mathbf{\Sigma}_{k+\ell},\quad
[F_k,F_\ell]\subset F_{k+\ell}\ \, \text{for all}\ \,  k, \ell \ge 0.\] 
\item[(2)] Each $\mathbf{\Sigma}_k\subset \mathbf{\Sigma}(A)$ (resp., $F_k\subset F(A_0)$) is a normal subgroup such that
$\mathbf{\Sigma}_k \supset F_k$, $k \ge 0$, and
\[ \mathbf{\Sigma}(A)=\mathbf{\Sigma}_0 \supset \mathbf{\Sigma}_1 \supset \mathbf{\Sigma}_2 \supset \dots;
\quad F(A_0)=F_0 \supset F_1 \supset F_2 \supset \dots \]
\item[(3)] Let $v \in \mathfrak{g}_1$, 
$a \in \mathfrak{a}^k\cap A_1$ and 
$b \in \mathfrak{a}^{\ell}\cap A_1$, where $k$, $\ell\ge 0$ are integers. Then we have
\[e(a,v)\, e(b,v)\equiv e(a+b,v)\ \mathrm{mod}\, F_{k+\ell}.\] 
\item[(4)] Choose arbitrarily a $\Bbbk$-free basis $v_1, . . . , v_n$ of $\mathfrak{g}_1$. 
Fix an integer $k >1 $. Then every element of $\mathbf{\Sigma}_1$ is congruent modulo $\mathbf{\Sigma}_{k}$ to 
a product 
\[ f \, e(a_1, v_1)\, e(a_2, v_2) \dots e(a_n, v_n), \]
where $f \in F_1$, and $a_i\in \mathfrak{a} \cap A_1$,\ $1 \le i \le n$.
\end{itemize}
\end{lemma}
\begin{proof}
(1), (3) These follow from Lemmas \ref{lem:e(a,v)}--\ref{lem:relations}. 

(2) This follows from (1). 

(4) We prove by induction on $k$.
If $v = \sum_{i=1}^n \lambda_i v_i$ with $\lambda_i \in \Bbbk$, then 
\[ e(a, v) = e(\lambda_1a, v_1)\, e(\lambda_2a, v_2)\dots e(\lambda_na, v_n). \]
Therefore, $\mathbf{\Sigma}_1$ is generated by $F_1$ and all the elements
$e(a, v_i)$,\ $1\le i \le n$, where $a_i\in \mathfrak{a}\cap A_1$.
Since $\mathbf{\Sigma}_1/\mathbf{\Sigma}_2$ is abelian by (1) the desired result for $k=2$ follows
by (3).

An analogous result on the group $\mathbf{\Sigma}_k/\mathbf{\Sigma}_{k+1}$ which is seen to be abelian, combined
with the induction hypothesis, shows that every element 
of $\mathbf{\Sigma}_1$ is congruent modulo $\mathbf{\Sigma}_{k+1}$ to 
a product 
\[ f \, e(a_1, v_1) \dots e(a_n, v_n)\, f' \, e(a'_1, v_1) \dots e(a'_n, v_n), \]
where $f \in F_1$, $f'\in F_k$, 
$a_i\in \mathfrak{a} \cap A_1$ and $a'_i\in \mathfrak{a}^k \cap A_1$, $1 \le i \le n$.
This is congruent to
\[ ff'\, e(a_1,v_1) \, e(a'_1, v_1) \dots e(a_n,v_n) \, e(a'_n, v_n) \]
since $\mathbf{\Sigma}_k/\mathbf{\Sigma}_{k+1}$ is central in $\mathbf{\Sigma}_1/\mathbf{\Sigma}_{k+1}$ by (1). 
The desired
result for $k+1$ follows by (3).
\end{proof}

\begin{proof}[Proof of Proposition \ref{prop:PhiA} (Continued)]
To express as a desired product, an element, say $h$, which is the product of any order of elements
\[ e(\tau_i, v_i),\ 1\le i \le n;\ f(\epsilon_j,x_j),\ 1\le j \le m,\]
where $v_i \in \mathfrak{g}_1$, $x_j \in \mathfrak{g}_0$, we may suppose that
$\tau_i$ and $\epsilon_j$ are variables, or more precisely, we may suppose
\[ A= \Bbbk[\epsilon_1,\dots,\epsilon_m]/(\epsilon_1^2,\dots, \epsilon_m^2)\otimes \wedge(\tau_1,\dots,\tau_n) \]
in which $\tau_i$, $1\le i \le n$, are odd variables; for arbitrary $\tau'_i$ and $\epsilon'_j$ in $B$, say,
one has only to specialize the obtained expression $h =f\, e(a_1,v_1)\dots e(a_n,v_n)$ via
$\mathbf{\Sigma}(A) \to \mathbf{\Sigma}(B)$ induced by $A\to B,\ \tau_i\mapsto \tau'_i,\ \epsilon_j \mapsto \epsilon'_j$. 
Lemma \ref{lem:Zubkov}, applied to this $A$ and the super-deal
$\mathfrak{a}=(\epsilon_1,\dots, \epsilon_m,\tau_1,\dots,\tau_n)$, gives the desired expression,
since $\mathfrak{a}$ is nilpotent, and so $\mathbf{\Sigma}_k$ is trivial for $k\gg 0$.
\end{proof}


\subsection{The group $\mathbf{\Gamma}(A)$}\label{subsec:GammaA}
Retain the situation as above.
 
Let $G$ be an affine group. The right adjoint action $G \times G \to G$, $(h,g) \mapsto g^{-1}hg$
is dualized to the left $G$-module structure on $\mathcal{O}(G)$ defined by
\begin{equation}\label{eq:gc}
{}^gc = g^{-1}(c_{(1)})\, c_{(2)}\, g(c_{(3)}), \quad g\in G(R), \ c \in \mathcal{O}(G), 
\end{equation}
where $R \in \mathsf{Alg}_{\Bbbk}$. This makes $\mathcal{O}(G)$ into a Hopf-algebra object in the
symmetric tensor category $G\text{-}\mathsf{Mod}$ of left $G$-modules.  

Recall that $\mathfrak{g}$ is a Lie superalgebra equipped with a $2$-operation, and it satisfies (C). 
Let $\mathsf{Aut}_{Lie}(\mathfrak{g})$ denote the subgroup functor of $\mathsf{GL}(\mathfrak{g})$
(see Section \ref{subsec:G-supermodule}) 
that assigns to each $R \in \mathsf{Alg}_{\Bbbk}$, the group 
$\operatorname{Aut}_{R\text{-}Lie}(\mathfrak{g}_R)$ 
of all $R$-Lie-superalgebra automorphisms preserving $(\ )_R^{\langle 2 \rangle}$; 
see Lemma \ref{lem:base_extension_2-operation}. 

We are going to work in a more general situation than will be needed to 
discuss a category equivalence in Section \ref{subsec:reprove}; see 
Remark \ref{rem:super_Lie_group} for the reason. 

Suppose that we are given a pairing and an anti-morphism, 
\begin{equation}\label{eq:pairing_alpha}
\langle \ , \ \rangle : \mathfrak{g}_0 \times \mathcal{O}(G) \to \Bbbk,\quad
\alpha : G \to \mathsf{Aut}_{Lie}(\mathfrak{g}).
\end{equation}
As in \eqref{eq:rho}, let us write as
$\rho_{\alpha}(z) = z_{(-1)} \otimes z_{(0)}$,\ $z \in \mathfrak{g}$.  
We assume that
\begin{itemize}
\item[(D1)] $[z,x]= \langle x,\ z_{(-1)}\rangle \, z_{(0)}$, 
\item[(D2)] $\langle x,\ cd\rangle = \langle x,\ c \rangle \, \varepsilon(d)
+ \varepsilon(c)\, \langle x,\ d \rangle$, and
\item[(D3)] $\langle x^g,\ c \rangle_R = \langle x,\ {}^g c \rangle_R$,
\end{itemize}
where $x \in \mathfrak{g}_0$, $z \in \mathfrak{g}$, $c, d \in \mathcal{O}(G)$ and 
$g \in G(R)$, $R \in \mathsf{Alg}_{\Bbbk}$.

By (D2) we have the map
\begin{equation}\label{eq:i_Lie_map}
\mathfrak{g}_0 \to \operatorname{Lie}(G)\, (\subset \mathcal{O}(G)^*),\quad x\mapsto \langle x,\ - \rangle.  
\end{equation}
This is a Lie algebra map, since we see from (D1) for even $z$ and (D3) that
\begin{align*}
\langle [x,y],\ c \rangle 
&= \langle x, c_{(2)}\rangle\, \langle y,\ \mathcal{S}(c_{(1)})c_{(3)}\rangle \\
&= \langle x\otimes y,\ \Delta(c)\rangle - \langle y \otimes x,\ \Delta(c)\rangle, 
\end{align*}
where $x, y \in \mathfrak{g}_0$,\ $c \in \mathcal{O}(G)$. 
Therefore, it uniquely extends to an algebra map $U(\mathfrak{g}_0) \to \mathcal{O}(G)^*$, 
with which is associated the Hopf pairing 
\begin{equation}\label{eq:Hopf_pair}
\langle\ , \ \rangle : U(\mathfrak{g}_0) \times \mathcal{O}(G) \to \Bbbk 
\end{equation}
that uniquely extends the given pairing.

Recall $A \in \mathsf{SAlg}_{\Bbbk}$. By Lemma \ref{lem:group_map} the base extension to $A_0$ of the last Hopf
pairing gives rise to the group map
\[
\mathsf{Gpl}(U(\mathfrak{g}_0)_{A_0}) \to \mathsf{Alg}_{\Bbbk}(\mathcal{O}(G), A_0)=G(A_0),\quad g \mapsto 
\langle g, -\rangle_{A_0},
\]
whose restriction to $F(A_0)$ we denote by
\[ i_{A_0}=i : F(A_0) \to G(A_0). \]

\begin{lemma}\label{lem:extension_to_Hopf_automorphism}
Let $R \in \mathsf{Alg}_{\Bbbk}$ and $g \in G(R)$. 
Then $\alpha_R(g) \in \operatorname{Aut}_{R\text{-}Lie}(\mathfrak{g}_R)$  
uniquely extends to an automorphism
of the Hopf superalgebra $\mathbf{U}(\mathfrak{g})_R$ over $R$. 
\end{lemma}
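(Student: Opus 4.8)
The plan is to show that the automorphism $\alpha_R(g)$ of the Lie superalgebra $\mathfrak{g}_R$ over $R$, which by definition preserves the super-bracket and the $2$-operation $(\ )_R^{\langle 2\rangle}$, induces a Hopf superalgebra automorphism of $\mathbf{U}(\mathfrak{g})_R = R\otimes \mathbf{U}(\mathfrak{g})$. First I would recall that by the universal property, $\mathbf{U}(\mathfrak{g})_R$ is the universal enveloping algebra of the Lie superalgebra $\mathfrak{g}_R$ over $R$, given the $2$-operation $(\ )_R^{\langle 2\rangle}$ supplied by Lemma~\ref{lem:base_extension_2-operation}. Indeed, $\mathbf{U}(\mathfrak{g})_R$ is obtained from the tensor algebra $\mathbf{T}_R(\mathfrak{g}_R)$ by dividing out the super-ideal generated by the elements of the form \eqref{eq:generators_of_super-ideal}, now written over $R$; this is the base extension of the construction of $\mathbf{U}(\mathfrak{g})$, and the defining relations base-extend correctly because $(\ )_R^{\langle 2\rangle}$ is the unique extension making the relation $v^2 = v^{\langle 2\rangle}_R$ hold.

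Granting this, the construction is functorial. Since $\alpha_R(g)\in \operatorname{Aut}_{R\text{-}Lie}(\mathfrak{g}_R)$ is an $R$-linear automorphism of $\mathfrak{g}_R$ that preserves both the super-bracket and the $2$-operation, it extends first to an $R$-algebra automorphism of the tensor algebra $\mathbf{T}_R(\mathfrak{g}_R)$, and this automorphism carries the generating relations \eqref{eq:generators_of_super-ideal} to themselves, hence descends to an $R$-algebra automorphism of $\mathbf{U}(\mathfrak{g})_R$. To see it is a Hopf superalgebra automorphism one checks compatibility with the comultiplication, counit and antipode; this is immediate on the generating set $\mathfrak{g}_R$, where every homogeneous element is primitive and its image under $\alpha_R(g)$ is again primitive of the same parity, so the extended map respects $\Delta$, $\varepsilon$ and $\mathcal{S}$ on generators and therefore everywhere, by multiplicativity. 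Uniqueness of the extension follows because $\mathbf{U}(\mathfrak{g})_R$ is generated as an $R$-algebra by the image of $\mathfrak{g}_R$, on which the extension is prescribed.

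The main obstacle I expect is the assertion that $\mathbf{U}(\mathfrak{g})_R$ really coincides with the enveloping algebra of $\mathfrak{g}_R$ over $R$ — that is, that the base extension of $\mathbf{U}(\mathfrak{g})$ agrees with the enveloping algebra built directly over $R$ with respect to $(\ )_R^{\langle 2\rangle}$. The delicate point is the behaviour of the relation $v^2 = v^{\langle 2\rangle}$ under base extension: for a general element $\sum_i c_i\otimes v_i$ of $(\mathfrak{g}_1)_R$, the square picks up cross terms, and one must verify that these are exactly accounted for by the formula for $(\ )_R^{\langle 2\rangle}$ in Lemma~\ref{lem:base_extension_2-operation}, so that the $R$-enveloping-algebra relations are already consequences of the base-extended relations \eqref{eq:generators_of_super-ideal}. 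This is precisely why condition (C) and the freeness of $\mathfrak{g}_1$ are invoked, and it is where I would spend the verification effort; once the identification of enveloping algebras is in hand, the remaining functoriality and Hopf-compatibility statements are formal.
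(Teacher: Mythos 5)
Your proposal is correct and in substance the same as the paper's proof: the paper extends $\alpha_R(g)$ to $\mathbf{T}(\mathfrak{g})_R$ and checks that the defining super-ideal is stabilized, and the only nontrivial point there is exactly the cross-term comparison you flag as the delicate step, namely that $(v^g)^2-(v^g)^{\langle 2\rangle}_R=\sum_i c_i^2\otimes(v_i^2-v_i^{\langle 2\rangle})+\sum_{i<j}c_ic_j\otimes(v_iv_j+v_jv_i-[v_i,v_j])$ lies in the ideal generated by the relations \eqref{eq:generators_of_super-ideal}. Your packaging via the identification of $\mathbf{U}(\mathfrak{g})_R$ with the $R$-enveloping algebra of $\mathfrak{g}_R$ and functoriality is a mild reformulation of the same argument, and your Hopf-compatibility and uniqueness remarks are the standard ones the paper leaves implicit.
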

\begin{proof}
One sees that $\alpha_R(g)$ uniquely extends an automorphism of the $R$-Hopf 
superalgebra $\mathbf{T}(\mathfrak{g})_R$. 
It is easy to see that the automorphism stabilizes the super-ideal of $\mathbf{T}(\mathfrak{g})_R$
generated by the elements
$zw -(-1)^{|z||w|}wz-[z,w]$ in \eqref{eq:generators_of_super-ideal}. To see that it stabilizes 
the super-ideal generated by all elements in \eqref{eq:generators_of_super-ideal}, let $v \in \mathfrak{g}_1$,
and suppose $v^g= \sum_i c_i \otimes v_i\in R \otimes \mathfrak{g}_1$. 
Then the desired result will follow if one compares the following two:
\begin{align*}
&(v^{\langle 2 \rangle})^g = (v^g)^{\langle 2 \rangle}_R 
= \sum_i c_i^2 \otimes v_i^{\langle 2 \rangle} + \sum_{i<j}c_i c_j\otimes [v_i, v_j],\\
&(v^2)^g = (v^g)^2 
= \sum_i c_i^2 \otimes v_i^2 + \sum_{i<j}c_i c_j \otimes (v_iv_j+v_j v_i). 
\end{align*}
\end{proof}
 
The assignment of the above extended automorphism to $g \in G(R)$
gives rise to an anti-morphism 
from $G$ to the automorphism group functor of $\mathbf{U}(\mathfrak{g})$, 
which we denote again by
\[ 
\alpha : G \to \mathsf{Aut}_{Hopf}(\mathbf{U}(\mathfrak{g})). 
\] 
Given $g \in G(A_0)$, the base extension $(\alpha_{A_0}(g))_A$ 
of $\alpha_{A_0}(g) \in \operatorname{Aut}_{A_0\text{-}Hopf} \ (\mathbf{U}(\mathfrak{g})_{A_0})$ along
$A_0 \to A$ is an automorphism of the Hopf superalgebra $\mathbf{U}(\mathfrak{g})_A$ over $A$. 
As before, we will write $u^g$ for $(\alpha_{A_0}(g))_A(u)$, where $u \in \mathbf{U}(\mathfrak{g})_A$,\
$g \in G(A_0)$. 
Since the action 
stabilizes $\mathbf{\Sigma}(A)$, as is seen from the next lemma, 
it follows that $g \mapsto (\alpha_{A_0}(g))_A|_{\Sigma(A)}$ defines a anti-group map 
from $G(A_0)$ to the automorphism group of the group $\mathbf{\Sigma}(A)$, 
which we denote by 
\[ 
\alpha_A : G(A_0) \to \operatorname{Aut}(\mathbf{\Sigma}(A)). 
\]

\begin{lemma}\label{lem:egfg} 
Let $g \in G(A_0)$. Let $e(a, v)$ and $f(\epsilon, x)$ be as before. Suppose  
\[
\rho_{\alpha}(v) = \sum_{i=1}^n c_i \otimes v_i \in \mathcal{O}(G)\otimes \mathfrak{g}_1,\quad 
\rho_{\alpha}(x) = \sum_{j=1}^m d_j \otimes x_j \in \mathcal{O}(G)\otimes \mathfrak{g}_0. 
\]  
Then we have
\begin{itemize}
\item[(1)] $e(a, v)^g= e(a g(c_1),v_1)\, e(a g(c_2), v_2) \dots e(a g(c_n), v_n)$,
\item[(2)] $f(\epsilon, x)^g = f(\epsilon g(d_1), x_1)\, f(\epsilon g(d_2), x_2)
\dots f(\epsilon g(d_m), x_m).$
\end{itemize}
\end{lemma}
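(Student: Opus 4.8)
The plan is to compute $e(a,v)^g$ and $f(\epsilon,x)^g$ by applying the automorphism $(\alpha_{A_0}(g))_A$ directly to the defining expressions \eqref{eq:ef}, and then to recognize the results as the claimed products. Recall that $(\alpha_{A_0}(g))_A = \mathrm{id}_A \otimes_{A_0} \alpha_{A_0}(g)$ is an automorphism of $\mathbf{U}(\mathfrak{g})_A$ over $A$, hence in particular $A$-linear and fixing the unit. First, therefore, I would write
\[
e(a,v)^g = (\alpha_{A_0}(g))_A(1\otimes 1) + a\cdot (\alpha_{A_0}(g))_A(1\otimes v)
= 1\otimes 1 + a\cdot (\alpha_{A_0}(g))_A(1\otimes v).
\]
The next step is to identify $(\alpha_{A_0}(g))_A(1\otimes v)$ with the action on $\mathfrak{g}$: since the base extension acts on $1 \otimes v$ through $\alpha_{A_0}(g)$, it equals $v^g$, and by the comodule description $v^g = g(v_{(-1)})\otimes v_{(0)}$ together with the hypothesis $\rho_{\alpha}(v) = \sum_{i=1}^n c_i\otimes v_i$ we obtain $(\alpha_{A_0}(g))_A(1\otimes v) = \sum_{i=1}^n g(c_i)\otimes v_i$, where each $g(c_i)\in A_0$. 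Substituting gives $e(a,v)^g = 1\otimes 1 + \sum_{i=1}^n a\,g(c_i)\otimes v_i$.

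It then remains to check that this last element coincides with the product $\prod_{i=1}^n e(a\,g(c_i),v_i)$. Writing $b_i := a\,g(c_i)\in A_1$, I would expand the product of the factors $1\otimes 1 + b_i\otimes v_i$ and observe that every monomial containing at least two of the factors $b_i\otimes v_i$ carries a coefficient of the form $b_i b_{i'} = a\,g(c_i)\,a\,g(c_{i'})$. Using that $A_0$ is central in $A$, this rearranges to $a^2\,g(c_i)\,g(c_{i'})$, which vanishes because $a^2 = 0$. Hence only the unit term and the single-factor terms survive, yielding exactly $1\otimes 1 + \sum_i b_i\otimes v_i$, which proves Part (1). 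Part (2) follows by the identical argument with $\epsilon, x, d_j, x_j, \eta_j := \epsilon\,g(d_j)$ in place of $a, v, c_i, v_i, b_i$, the cross terms now being annihilated by $\epsilon^2 = 0$ instead.

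I expect no serious obstacle here; the only point demanding care is the bookkeeping in the superalgebra $\mathbf{U}(\mathfrak{g})_A$, where the product of $b_i\otimes v_i$ and $b_{i'}\otimes v_{i'}$ acquires a super-sign from \eqref{eq:super-symmetry}. That sign is immaterial precisely because the scalar coefficient $b_i b_{i'}$ (respectively $\eta_j\eta_{j'}$) already vanishes, so the entire cross term is zero regardless of sign; the genuine content of the lemma is this collapse of the product to its linear part, forced by the square-zero conditions $a^2 = 0$ and $\epsilon^2 = 0$ combined with the centrality of $A_0$.
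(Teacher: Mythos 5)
Your proof is correct and is exactly the direct verification the paper has in mind when it dismisses this lemma with ``This is easy to see'': apply the $A$-linear automorphism $(\alpha_{A_0}(g))_A$ to $1\otimes 1 + a\otimes v$, identify the image of $1\otimes v$ via the comodule description of the action, and observe that the product of the claimed factors collapses to its linear part because the cross-term coefficients $a\,g(c_i)\,a\,g(c_{i'}) = a^2 g(c_i)g(c_{i'})$ vanish (centrality of $A_0$ plus $a^2=0$, respectively $\epsilon^2=0$). Your remark that the super-sign on the cross terms is immaterial for this reason is the right point of care, and the same identity $e(a,v)^g = 1\otimes 1 + a\,v^g$ is indeed what the paper later invokes in \eqref{eq:eavg}.
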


This is easy to see. We remark that $F(A_0)$ is $G(A_0)$-stable by Part 2. 

\begin{prop}\label{prop:quintuple}
The quintuple
\[ 
(\mathbf{\Sigma}(A),\ F(A_0),\ G(A_0),\ i_{A_0},\ \alpha_A)
\]
satisfies Conditions (A1)--(A3) given in Section \ref{sec:base_extension}. 
\end{prop}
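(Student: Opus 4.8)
The plan is to verify Conditions (A1)--(A3) of Section~\ref{sec:base_extension} one at a time for the quintuple $(\mathbf{\Sigma}(A),F(A_0),G(A_0),i_{A_0},\alpha_A)$. Condition (A1) holds by construction, as $F(A_0)$ was defined to be a subgroup of $\mathbf{\Sigma}(A)$. The common tool for the other two conditions is to reduce every identity to the generators $e(a,v)$ and $f(\epsilon,x)$ of $\mathbf{\Sigma}(A)$, together with the explicit description of the group map: since $f(\epsilon,x)=1\otimes1+\epsilon\otimes x$ lies in $A_0\otimes U(\mathfrak{g}_0)$ and the Hopf pairing \eqref{eq:Hopf_pair} is extended $A_0$-linearly, one reads off $i(f(\epsilon,x))(c)=\varepsilon(c)+\epsilon\,\langle x,\,c\rangle$ for $c\in\mathcal{O}(G)$.

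For (A2), note that both $\varphi\mapsto\varphi^{i(f)}$ and $\varphi\mapsto f^{-1}\varphi f$ are right actions of $F(A_0)$ on $\mathbf{\Sigma}(A)$ by group automorphisms---the first because $i_{A_0}$ is a group map and $\alpha_A$ is an anti-morphism---so it suffices to test $\varphi^{i(f)}=f^{-1}\varphi f$ when $f=f(\epsilon,x)$ and $\varphi$ is $e(a,v)$ or $f(\eta,y)$. On the right-hand side, relations~(iii) and~(iv) of Lemma~\ref{lem:relations} give $f(\epsilon,x)^{-1}e(a,v)f(\epsilon,x)=e(a,v)\,e(\epsilon a,[v,x])$ and the analogous expression for $f(\eta,y)$. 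On the left-hand side I apply Lemma~\ref{lem:egfg} with $g=i(f(\epsilon,x))$: writing $\rho_\alpha(v)=\sum_i c_i\otimes v_i$, each factor $e(a\,g(c_i),v_i)$ splits as $e(a\varepsilon(c_i),v_i)\,e(a\epsilon\langle x,c_i\rangle,v_i)$ because $a^2=0$ annihilates the correction term in relation~(ii) and permits all reorderings. The counit identity $\sum_i\varepsilon(c_i)v_i=v$ reassembles the first group into $e(a,v)$, while (D1) gives $\sum_i\langle x,c_i\rangle v_i=[v,x]$, reassembling the second into $e(\epsilon a,[v,x])$; the case $\varphi=f(\eta,y)$ is identical, using $\rho_\alpha(y)$ and (D1) for $z=y\in\mathfrak{g}_0$.

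For (A3), that $f^g\in F(A_0)$ is immediate from Lemma~\ref{lem:egfg}(2), since $\alpha_A(g)$ sends each generator $f(\epsilon,x)$ to a product of such elements and acts by automorphisms. To prove $i(f^g)=g^{-1}\,i(f)\,g$ in $G(A_0)=\mathsf{Alg}_\Bbbk(\mathcal{O}(G),A_0)$, I evaluate both sides at $c\in\mathcal{O}(G)$. Using the convolution product and the adjoint coaction \eqref{eq:gc}, and then $A_0$-linearity with the centrality of $A_0$, one finds
\[
(g^{-1}\,i(f)\,g)(c)=g^{-1}(c_{(1)})\,\langle f,\,c_{(2)}\rangle_{A_0}\,g(c_{(3)})=\langle f,\,{}^{g}c\rangle_{A_0}.
\]
Hence the claim reduces to the equivariance $\langle f^{g},\,c\rangle_{A_0}=\langle f,\,{}^{g}c\rangle_{A_0}$ for $f\in F(A_0)$.

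The step I expect to be the main obstacle is precisely this equivariance, which amounts to extending (D3) from $\mathfrak{g}_0$ to $U(\mathfrak{g}_0)$. I would prove it by induction on word length: the base case is (D3), and for a product $uv$ one uses that $\alpha_{A_0}(g)$ is an algebra map and that the pairing is a Hopf pairing to write $\langle (uv)^{g},c\rangle=\langle u^{g},c_{(1)}\rangle\,\langle v^{g},c_{(2)}\rangle$, then the inductive hypothesis and the fact that \eqref{eq:gc} makes $\mathcal{O}(G)$ a Hopf-algebra object in $G\text{-}\mathsf{Mod}$---so ${}^{g}(-)$ commutes with $\Delta$---to match this with $\langle u,{}^{g}c_{(1)}\rangle\langle v,{}^{g}c_{(2)}\rangle=\langle uv,{}^{g}c\rangle$. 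Apart from this, the only delicate points are the sign and square-zero cancellations organized in (A2); conceptually, the whole proposition records that $i_{A_0}$ intertwines the $\alpha_A$-action on grouplikes with conjugation in $G(A_0)$.
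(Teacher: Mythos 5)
Your proof is correct and follows essentially the same route as the paper: all three conditions are checked on the generators $e(a,v)$, $f(\epsilon,x)$ using Lemmas \ref{lem:relations} and \ref{lem:egfg} together with (D1) for (A2) and (D3) for the second half of (A3). The only cosmetic difference is that the paper computes $e(a,v)^{i(f)}$ directly as $1\otimes 1 + a\, i(f)(v_{(-1)})\otimes v_{(0)}$ rather than reassembling the product from Lemma \ref{lem:egfg}, and it verifies $i(f^g)=g^{-1}i(f)g$ only on the generators $f(\epsilon,x)$ (sufficient by multiplicativity), which spares the induction extending (D3) to all of $U(\mathfrak{g}_0)$.
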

\begin{proof}
Since the last remark shows that the first half of (A3) is satisfied, 
it remains to verify (A2) and the second half of (A3). 

Choose $g \in G(A_0)$, and let $f = f(\epsilon, x)$. 
Note   
\[ i(f)(c) = \varepsilon(c)1 + \epsilon \langle x,\ c \rangle,\quad c \in \mathcal{O}(G). \]
Then by using (D3) we see
\begin{align*}
i(f^g)(c) &= \varepsilon(c)1 + \epsilon\, \langle x^g,\ c\rangle_{A_0}  
= \varepsilon(c)1 + \epsilon\, \langle x,\ {}^g c \rangle_{A_0}\\ 
&=\varepsilon(c)1 + \epsilon\, g^{-1}(c_{(1)})\, \langle x,\ c_{(2)}\rangle\, g(c_{(3)})\\
&= (g^{-1}i(f)g)(c),
\end{align*}
which verifies the second half of (A3). By using (D1) we see
\begin{align*}
e(a, v)^{i(f)}&= 1 \otimes 1 + a \, i(f)(v_{(-1)})\otimes v_{(0)}\\
&= 1 \otimes 1 +a \otimes v + \epsilon a\otimes \langle x,\ v_{(-1)}\rangle v_{(0)}\\
&= 1 \otimes 1 + a \otimes v + \epsilon a \otimes [v,x]\\ 
&= e(a, v)\, e(\epsilon a, [v,x]),
\end{align*}
and similarly,
\[
f(\eta, y)^{i(f)} = f(\eta, y)\, f(\epsilon \eta, [y,x]).
\]
These, combined with (iii)--(iv) of Lemma \ref{lem:relations}, verify (A2). 
\end{proof}

\begin{definition}\label{def:Gamma(A)}
$\mathbf{\Gamma}(A)$ denotes the base extension of $\mathbf{\Sigma}(A)$ along 
$i_{A_0} : F(A_0) \to (G(A_0), \alpha_A)$; see Definition \ref{def:base_extension}. 
\end{definition}

In $\mathbf{\Gamma}(A)$, the natural images of $e(a,v)$ and of elements $g \in G(A_0)$ 
will be denoted by the same symbols. 

\begin{prop}\label{prop:basis}
Choose arbitrarily a $\Bbbk$-free basis $v_1,\dots, v_n$ of $\mathfrak{g}_1$. 
Then every element of $\mathbf{\Gamma}(A)$ is uniquely expressed in the form 
\[ g\, e(a_1,v_1)\, e(a_2,v_2)\dots e(a_n,v_n), \]
where $g \in G(A_0)$,\ $a_i\in A_1$, $1\le i \le n$. 
\end{prop}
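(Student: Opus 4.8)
The plan is to combine the normal-form result for $\mathbf{\Sigma}(A)$ from Proposition \ref{prop:PhiA}(2) with the coset description coming from the base-extension construction in Lemma \ref{lem:base_extension}. Recall that $\mathbf{\Gamma}(A)$ is by Definition \ref{def:Gamma(A)} the base extension of $\mathbf{\Sigma}(A)$ along $i_{A_0} : F(A_0) \to (G(A_0),\alpha_A)$, and by Proposition \ref{prop:quintuple} the quintuple $(\mathbf{\Sigma}(A), F(A_0), G(A_0), i_{A_0}, \alpha_A)$ satisfies (A1)--(A3), so all the results of Section \ref{sec:base_extension} apply. In particular, Lemma \ref{lem:base_extension}(1) lets us regard $G(A_0)$ as a subgroup of $\mathbf{\Gamma}(A)$, and Lemma \ref{lem:base_extension}(2) gives a bijection
\[
F(A_0)\backslash \mathbf{\Sigma}(A) \;\overset{\sim}{\longrightarrow}\; G(A_0)\backslash \mathbf{\Gamma}(A)
\]
between the sets of right cosets, induced by the composite $\mathbf{\Sigma}(A)\to \mathbf{\Gamma}(A)$.

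First I would record what Proposition \ref{prop:PhiA}(2) says about the coset space $F(A_0)\backslash \mathbf{\Sigma}(A)$: since every element of $\mathbf{\Sigma}(A)$ is \emph{uniquely} of the form $f\, e(a_1,v_1)\cdots e(a_n,v_n)$ with $f\in F(A_0)$ and $a_i\in A_1$, the products $e(a_1,v_1)\cdots e(a_n,v_n)$ form a complete set of representatives for the right cosets $F(A_0)\backslash \mathbf{\Sigma}(A)$, and distinct tuples $(a_1,\dots,a_n)$ give distinct cosets. Transporting this through the bijection of Lemma \ref{lem:base_extension}(2), the images of these products in $\mathbf{\Gamma}(A)$ form a complete and irredundant set of representatives for the right cosets $G(A_0)\backslash \mathbf{\Gamma}(A)$. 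Hence every element of $\mathbf{\Gamma}(A)$ lies in exactly one coset $G(A_0)\cdot e(a_1,v_1)\cdots e(a_n,v_n)$, which immediately yields existence of the asserted expression $g\, e(a_1,v_1)\cdots e(a_n,v_n)$ and uniqueness of the tuple $(a_1,\dots,a_n)$.

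It then remains to see that $g\in G(A_0)$ is itself uniquely determined once the tuple is fixed; this is where the injectivity in Lemma \ref{lem:base_extension}(1) does the work. If $g\, e(a_1,v_1)\cdots e(a_n,v_n) = g'\, e(a_1,v_1)\cdots e(a_n,v_n)$ in $\mathbf{\Gamma}(A)$, then cancelling the common suffix gives $g=g'$ in $\mathbf{\Gamma}(A)$, and since $G(A_0)\hookrightarrow \mathbf{\Gamma}(A)$ is injective we conclude $g=g'$ in $G(A_0)$. Thus both $g$ and the tuple are uniquely determined by the element, which is precisely the claim.

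I do not expect a genuine obstacle here, since the statement is essentially a translation of the normal form for $\mathbf{\Sigma}(A)$ through the structural description of base extensions. The one point that requires care is the bookkeeping for the coset bijection: I must check that the representatives $e(a_1,v_1)\cdots e(a_n,v_n)$ of $F(A_0)\backslash\mathbf{\Sigma}(A)$ really do map, under $\mathbf{\Sigma}(A)\to\mathbf{\Gamma}(A)$, to the asserted representatives of $G(A_0)\backslash\mathbf{\Gamma}(A)$ — that is, that the composite $\mathbf{\Sigma}(A)\to\mathbf{\Gamma}(A)$ restricted to a chosen set $X$ of coset representatives (take $X$ to be the set of products $e(a_1,v_1)\cdots e(a_n,v_n)$) matches the parametrization used in the proof of Lemma \ref{lem:base_extension}. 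Once the identifications of the two coset spaces are lined up correctly, existence and uniqueness follow formally.
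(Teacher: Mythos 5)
Your proposal is correct and follows essentially the same route as the paper, which simply cites Proposition \ref{prop:PhiA}(2) together with the proof of Lemma \ref{lem:base_extension}(2): there the set $X$ of coset representatives of $F(A_0)\backslash\mathbf{\Sigma}(A)$ is taken to be the products $e(a_1,v_1)\cdots e(a_n,v_n)$, and the left $G$-equivariant bijection $G(A_0)\times X\to\mathbf{\Gamma}(A)$ constructed in that proof yields existence and uniqueness of both $g$ and the tuple at once. Your slight variant — deducing uniqueness of $g$ separately from cancellation and the injectivity in Lemma \ref{lem:base_extension}(1) — is a harmless repackaging of the same argument.
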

\begin{proof}
This follows from Proposition \ref{prop:PhiA} (2) and the proof of Lemma \ref{lem:base_extension} (2). 
\end{proof}

Gavarini's original construction starts with constructing by generators and relation
the group which shall be the functor points of the desired affine supergroup;
see the group $G_{\mathcal{P}}(A)$ defined by \cite[Definition 4.3.2]{G}.
Let us prove 
that the group, which is essentially the same as $\mathbf{\Gamma'}(A)$ below, 
is isomorphic to our $\mathbf{\Gamma}(A)$, though
the result will not be used in the subsequent argument. 

\begin{lemma}\label{lem:free_product}
Choose arbitrarily a $\Bbbk$-free basis $v_1,\dots, v_n$ of $\mathfrak{g}_1$.
Let $E(A_1)$ denote the free group on the set of the symbols
\[ e_j(a),\ 1 \le j \le n,\ a \in A_1,  \]
and let $\mathbf{\Gamma'}(A)$ denote the quotient group of the free product $G(A_0) * E(A_1)$ 
divided by the relations
\begin{itemize}
\item[(i)] $e_j(a)\, e_k(b) = i(f(-ab,[v_j,v_k]))\, e_k(b)\, e_j(a)$,\; $k < j$, 
\item[(ii)] $e_j(a)\, e_j(b) = i(f(-ab,v_j^{\langle 2 \rangle}))\, e_j(a+b)$,
\item[(iii)] $e_j(a)\, g = g\, e_1(a g(c_{j1})) \dots e_n(a g(c_{jn}))$, where $g \in G(A_0)$,\ 
and we suppose $\rho_{\alpha}(v_j)=\sum_{k=1}^n c_{jk} \otimes v_k$. 
\end{itemize} 
Then 
\[ e_j(a) \mapsto e(a, v_j),\quad 1\le j \le n,\ a \in A_1 \]
gives an isomorphism $\mathbf{\Gamma'}(A) \overset{\simeq}{\longrightarrow} \mathbf{\Gamma}(A)$
which is identical on $G(A_0)$.
\end{lemma}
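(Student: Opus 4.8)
The plan is to exhibit the claimed map as a well-defined group homomorphism and then produce an inverse. The key structural fact at my disposal is Proposition \ref{prop:basis}: every element of $\mathbf{\Gamma}(A)$ is \emph{uniquely} written as $g\, e(a_1,v_1)\cdots e(a_n,v_n)$ with $g\in G(A_0)$ and $a_i\in A_1$. This normal form is exactly what matches the generators $g$ and $e_j(a)$ of $\mathbf{\Gamma'}(A)$, so the proof should hinge on showing that the defining relations (i)--(iii) of $\mathbf{\Gamma'}(A)$ are precisely the relations needed to reduce an arbitrary word to this normal form, and nothing more.

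**Well-definedness of the map $\mathbf{\Gamma'}(A)\to\mathbf{\Gamma}(A)$.**
First I would check that $e_j(a)\mapsto e(a,v_j)$, together with the identity on $G(A_0)$, respects all the defining relations of $\mathbf{\Gamma'}(A)$. Since $\mathbf{\Gamma'}(A)$ is a quotient of the free product $G(A_0)*E(A_1)$, it suffices to verify that the images of the two sides of each relation (i)--(iii) agree in $\mathbf{\Gamma}(A)$. Relation (i) and (ii) are the images of parts (i)--(ii) of Lemma \ref{lem:relations}, once one remembers that inside $\mathbf{\Gamma}(A)$ the element $f(\epsilon,x)\in F(A_0)$ has been identified with $i(f(\epsilon,x))\in G(A_0)$ via the base-extension construction (this is exactly the content of the relation $\Xi$ collapsing $(i(f),f^{-1})$ to the identity). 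Relation (iii) is the image of Lemma \ref{lem:egfg}(1), rewritten as $e(a,v_j)^g = g^{-1}e(a,v_j)g$ and then expanded; here too one uses that conjugation by $g\in G(A_0)$ inside $\mathbf{\Gamma}(A)$ realizes $\alpha_A(g)$, by the definition of the semidirect product defining $\Gamma$. So each relation holds, and the universal property of the quotient yields a well-defined homomorphism $\Phi:\mathbf{\Gamma'}(A)\to\mathbf{\Gamma}(A)$ which is the identity on $G(A_0)$.

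**Surjectivity and construction of the inverse.**
Surjectivity is immediate, since $\mathbf{\Gamma}(A)$ is generated by $G(A_0)$ together with the $e(a,v_j)$, all of which lie in the image of $\Phi$. For injectivity I would produce the inverse directly using the normal form. The idea is that relations (i)--(iii) allow one to rewrite any word in the generators of $\mathbf{\Gamma'}(A)$ into the standard shape $g\, e_1(a_1)\cdots e_n(a_n)$: relation (iii) moves all the $G(A_0)$-factors to the left, relation (i) sorts the $e_j$'s into increasing index order, and relation (ii) merges repeated indices, with the newly created $f(\cdots)$-terms (which under $i$ land in $G(A_0)$) absorbed into the leading group factor. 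This is the same reduction procedure as in the proof of Proposition \ref{prop:PhiA}(2), and it shows every element of $\mathbf{\Gamma'}(A)$ can be brought to this form. I would then define a set-theoretic map on normal forms, $g\, e_1(a_1)\cdots e_n(a_n)\mapsto g\, e(a_1,v_1)\cdots e(a_n,v_n)$, and observe that Proposition \ref{prop:basis} guarantees the target expressions are in bijection with such tuples $(g;a_1,\dots,a_n)$.

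**The main obstacle.**
The genuine difficulty is proving that the reduction of words in $\mathbf{\Gamma'}(A)$ to normal form is \emph{confluent}, i.e.\ that the reduced form is unique, so that injectivity of $\Phi$ follows. Rather than checking confluence by hand (which would involve overlapping-relation diamond lemmas), the cleaner route is to argue by cardinality/bijection: the reduction shows the normal forms surject onto $\mathbf{\Gamma'}(A)$, while Proposition \ref{prop:basis} shows $\Phi$ restricted to normal forms is injective (distinct tuples give distinct elements of $\mathbf{\Gamma}(A)$). Since $\Phi$ is surjective and identifies a generating set of normal forms injectively with the corresponding unique expressions in $\mathbf{\Gamma}(A)$, a word in $\mathbf{\Gamma'}(A)$ and its normal form must have the same $\Phi$-image, forcing two words with equal $\Phi$-image to have the same normal form, hence to be equal in $\mathbf{\Gamma'}(A)$. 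This sidesteps a direct confluence check by importing uniqueness from the already-established Proposition \ref{prop:basis}; making this last logical step airtight is where I would be most careful.
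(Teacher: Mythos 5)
Your proposal is correct and takes essentially the same route as the paper: the paper also checks the relations to get an epimorphism, and then proves injectivity by combining the existence of normal forms in $\mathbf{\Gamma'}(A)$ (via the reduction of Proposition \ref{prop:PhiA}) with the uniqueness of normal forms in $\mathbf{\Gamma}(A)$ (Proposition \ref{prop:basis}), phrased there as a surjective set-theoretic section of the epimorphism. Your concern about confluence is rightly dispelled --- uniqueness is imported from $\mathbf{\Gamma}(A)$, so no diamond-lemma check is needed, exactly as in the paper's argument.
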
 
\begin{proof}
It is easy to see that the assignment above gives an epimorphism. By Proposition \ref{prop:basis}, 
$g\, e(a_1,v_1) \dots e(a_n,v_n)\mapsto g\, e_1(a_1) \dots e_n(a_n)$ well defines a section. 
This section is surjective, since one sees just as proving Proposition \ref{prop:PhiA}
that every element of $\mathbf{\Gamma'}(A)$ is expressed in the form $g \, e_1(a_1) \dots e_n(a_n)$, 
where $g \in G(A_0)$,\ $a_j \in A_1$, $1 \le j \le n$. The surjectivity of the section 
proves that the epimorphism is an isomorphism. 
\end{proof}

\subsection{The affine supergroup $\mathbf{\Gamma}$}\label{subsec:supergroup_Gamma}
Retain the situation as above. One sees easily that
\[ A \mapsto \mathbf{\Gamma}(A) \]
defines a group functor $\mathbf{\Gamma}$ on $\mathsf{SAlg}_{\Bbbk}$. Moreover, we see:

\begin{prop}\label{prop:Gamma_affine}
This $\mathbf{\Gamma}$ is an affine supergroup, represented by the super-commutative superalgebra
\begin{equation}\label{eq:OGamma}
\mathbf{O} := \mathcal{O}(G) \otimes \wedge(\mathfrak{g}_1^*). 
\end{equation}
\end{prop}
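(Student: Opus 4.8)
The plan is to produce a natural isomorphism of set-valued functors $\mathbf{\Gamma}\cong\mathsf{SAlg}_\Bbbk(\mathbf{O},-)$ and then to let the group structure of $\mathbf{\Gamma}$ induce the Hopf superalgebra structure on the representing object. This suffices: by the principle recalled in Section \ref{subsec:ASG_and_HCP}, a representable group functor is an affine supergroup, and its representing super-commutative superalgebra automatically carries the Hopf superalgebra structure coming from the group law. In particular I would deliberately not write down the (twisted) comultiplication of $\mathbf{O}$ by hand.

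First I would record that $\mathbf{O}=\mathcal{O}(G)\otimes\wedge(\mathfrak{g}_1^*)$ indeed lies in $\mathsf{SAlg}_\Bbbk$: it is super-commutative because $\mathcal{O}(G)$ is purely even and central while $\wedge(\mathfrak{g}_1^*)$ is super-commutative, and the square of an odd element vanishes since $\mathcal{O}(G)$ is central and the odd generators of $\wedge(\mathfrak{g}_1^*)$ anticommute and square to zero. For the functor of points, I would use that the tensor product is the coproduct in $\mathsf{SAlg}_\Bbbk$, so a superalgebra map $\mathbf{O}\to A$ is the same as a pair of superalgebra maps $\mathcal{O}(G)\to A$ and $\wedge(\mathfrak{g}_1^*)\to A$. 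The first, $\mathcal{O}(G)$ being purely even, lands in $A_0$ and is precisely a point $g\in G(A_0)$; the second, $\wedge(\mathfrak{g}_1^*)$ being the free super-commutative superalgebra on the purely odd $\Bbbk$-finite free module $\mathfrak{g}_1^*$ (here the clause $a^2=0$ for odd $a$ in the definition of super-commutativity is essential), is the same as a $\Bbbk$-linear map $\mathfrak{g}_1^*\to A_1$. Fixing the basis $v_1,\dots,v_n$ of $\mathfrak{g}_1$ with dual basis $v_1^*,\dots,v_n^*$, this gives a natural bijection $\mathsf{SAlg}_\Bbbk(\mathbf{O},A)\cong G(A_0)\times A_1^{\,n}$.

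On the other side, Proposition \ref{prop:basis} gives, for the same basis, a bijection $\mathbf{\Gamma}(A)\cong G(A_0)\times A_1^{\,n}$ via the unique normal form. Composing, I define $\Phi_A\colon\mathbf{\Gamma}(A)\to\mathsf{SAlg}_\Bbbk(\mathbf{O},A)$ sending $g\,e(a_1,v_1)\cdots e(a_n,v_n)$ to the superalgebra map that restricts to $g$ on $\mathcal{O}(G)$ and sends $v_i^*\mapsto a_i$; this is a bijection for every $A$. For naturality it is enough to observe that a morphism $u\colon A\to B$ of $\mathsf{SAlg}_\Bbbk$ carries normal forms to normal forms: since $\mathbf{\Gamma}(u)$ sends $g\mapsto u_0\circ g$ and $e(a_i,v_i)\mapsto e(u(a_i),v_i)$ with $u_0=u|_{A_0}$, the image $(u_0\circ g)\,e(u(a_1),v_1)\cdots e(u(a_n),v_n)$ is already reduced, so the coordinates transform by $(g,(a_i))\mapsto(u_0\circ g,(u(a_i)))$, exactly as post-composition by $u$ acts on the right-hand side. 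Hence $\Phi$ is a natural isomorphism, $\mathbf{\Gamma}$ is representable by $\mathbf{O}$, and the group law transported along $\Phi$ makes $\mathbf{O}$ a Hopf superalgebra, so $\mathbf{\Gamma}$ is an affine supergroup with $\mathcal{O}(\mathbf{\Gamma})=\mathbf{O}$.

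The main obstacle is not a computation but two points of principle. The first is recognising that set-level representability already suffices, so that the nontrivial (twisted) coproduct of $\mathbf{O}$ need never be produced: it is inherited from $\mathbf{\Gamma}$ by Yoneda. The second is getting the odd functor points right over the arbitrary base ring $\Bbbk$, for which the square-zero clause in the definition of super-commutativity and the $\Bbbk$-finite freeness of $\mathfrak{g}_1$ in hypothesis (C)—which also makes Proposition \ref{prop:basis} available—are exactly what is needed. I would finally remark that, should one want the comultiplication of $\mathbf{O}$ explicitly, the natural Hopf pairing $\mathbf{U}(\mathfrak{g})\times\mathbf{O}\to\Bbbk$ assembled from \eqref{eq:Hopf_pair} and the duality \eqref{eq:duality}, via Lemma \ref{lem:group_map}, recovers $\Phi$ and identifies the induced structure; but this is unnecessary for the statement as given.
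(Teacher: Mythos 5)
Your proposal is correct and follows essentially the same route as the paper: both use Proposition \ref{prop:basis} to identify $\mathbf{\Gamma}(A)$ with $G(A_0)\times A_1^{\,n}$, identify $\mathsf{SAlg}_{\Bbbk}(\mathbf{O},A)$ with the same set (relying on the square-zero condition for odd elements of $A$), and let the transported group law furnish the Hopf superalgebra structure on $\mathbf{O}$ rather than writing it out. Your extra remarks on the coproduct property of $\otimes$ in $\mathsf{SAlg}_{\Bbbk}$ and on naturality merely spell out details the paper leaves implicit.
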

\begin{proof}
Choose a $\Bbbk$-free basis $v_1,\dots, v_n$ of $\mathfrak{g}_1$, as above. 
Let $w_1,\dots, w_n$ denote the dual basis of $\mathfrak{g}_1^*$. Proposition \ref{prop:basis} gives the bijection
\begin{equation}\label{eq:bijection_to_Gamma}
G(A) \times A_1^n \overset{\simeq}{\longrightarrow} \mathbf{\Gamma}(A),\ \; (g,a_1,\dots, a_n) \mapsto 
g\, e(a_1,v_1) \dots e(a_n,v_n),
\end{equation}
which is seen to be natural in $A$. To an element $(g,a_1,\dots, a_n) \in G(A) \times A_1^n$, assign the
superalgebra map $\phi : \mathbf{O} \to A$ defined by
\[ \phi(c) =g(c),\; c \in \mathcal{O}(G),\quad \phi(w_i)=a_i, \; 1\le i \le n. \]
This indeed defines such a map since every element in $A_1$ is required to be square-zero. 
The assignment above is in fact a bijection
\begin{equation}\label{eq:bijection_to_SAlg}
G(A) \times A_1^n\overset{\simeq}{\longrightarrow} \mathsf{SAlg}_{\Bbbk}(\mathbf{O}, A) 
\end{equation}
which is natural in $A$. This proves the proposition. 
\end{proof}
\begin{rem}\label{rem:O}
Note that $G$, regarded as $A \mapsto G(A_0)$, is a subgroup functor of $\mathbf{\Gamma}$. 
Let $\mathbf{G}_{\text{a}}^{-n}$ denote
the functor which assigns to $A\in \mathsf{SAlg}_{\Bbbk}$ the additive group $A_1^n$.
Note that this $\mathbf{G}_{\text{a}}^{-n}$ is represented by 
$\wedge(\mathfrak{g}_1^*)$. 
One sees that the bijection \eqref{eq:bijection_to_Gamma}
gives rise to a left $G$-equivariant isomorphism 
$G \times \mathbf{G}_{\text{a}}^{-n} \overset{\simeq}{\longrightarrow} \mathbf{\Gamma}$ of functors
which preserves the identity element.
\end{rem}

The superalgebra $\mathbf{O}$ has a unique Hopf-superalgebra structure that makes the composite
$\mathbf{\Gamma}(A)\overset{\simeq}{\longrightarrow}\mathsf{SAlg}_{\Bbbk}(\mathbf{O}, A)$ of
the bijections \eqref{eq:bijection_to_Gamma} and \eqref{eq:bijection_to_SAlg} into 
an isomorphism of group functors. 
In particular, the counit is the tensor product
\[ \varepsilon \otimes \varepsilon : \mathcal{O}(G) \otimes \wedge(\mathfrak{g}_1^*) \to \Bbbk \]
of the counits of the Hopf superalgebras $\mathcal{O}(G)$ and $\wedge(\mathfrak{g}_1^*)$, as is 
seen from Remark \ref{rem:O}. It follows that
\[ \mathbf{O}^+/(\mathbf{O}^+)^2 = \mathcal{O}(G)^+/(\mathcal{O}(G)^+)^2\oplus \, \mathfrak{g}_1^*, \]
which is dualized to the identification
\[ \operatorname{Lie}(\mathbf{\Gamma})= \operatorname{Lie}(G) \oplus \mathfrak{g}_1 \]
of $\Bbbk$-supermodules. 

Let  
$i' : \mathfrak{g}_0 \to \operatorname{Lie}(G)$ 
denote the Lie algebra map given by \eqref{eq:i_Lie_map}.
Let $\operatorname{Der}(\mathfrak{g})$ denote the Lie algebra of $\Bbbk$-super-linear 
derivations on $\mathfrak{g}$.
The morphism $\alpha$ given in \eqref{eq:pairing_alpha} induces the anti-Lie algebra map 
\[ 
\alpha' : \operatorname{Lie}(G)\to \operatorname{Der}(\mathfrak{g}),\quad 
\alpha'(x)(z)= x(z_{(-1)})\, z_{(0)},
\]
where $x \in \operatorname{Lie}(G),\ z \in \mathfrak{g}$. 
We remark that by (D1), the composite
$\alpha' \circ i' : \mathfrak{g}_0\to \operatorname{Der}(\mathfrak{g})$ coincides with the
right adjoint representation. 

\begin{prop}\label{prop:bracket}
We have the following.
\begin{itemize}
\item[(1)]
The super-bracket on $\operatorname{Lie}(\mathbf{\Gamma})= \operatorname{Lie}(G) \oplus \mathfrak{g}_1$
is given by
\[
[(x,u), (y,v)] = \big([x,y]+i'([u,v]), \; \alpha'(y)(u) - \alpha'(x)(v) \big), 
\]
where $x, y \in \operatorname{Lie}(G)$,\ $u, v \in \mathfrak{g}_1$.
\item[(2)] $i' \oplus \operatorname{id} : \mathfrak{g} = \mathfrak{g}_0 \oplus \mathfrak{g}_1
\to \operatorname{Lie}(G) \oplus \mathfrak{g}_1= \operatorname{Lie}(\mathbf{\Gamma})$ is a Lie superalgebra
map which preserves the $2$-operation.
\end{itemize} 
\end{prop}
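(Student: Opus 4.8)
The plan is to compute everything inside $\mathbf{O}^{\bar{*}}$, where by Lemma \ref{lem:Lie(G)} the Lie superalgebra $\operatorname{Lie}(\mathbf{\Gamma})=\operatorname{Lie}(G)\oplus\mathfrak{g}_1$ sits with super-bracket equal to the commutator and $2$-operation equal to the square. I would evaluate these on the two distinguished families of tangent vectors: an even $x\in\operatorname{Lie}(G)$, realized by the point $\varepsilon+\epsilon\,\bar{x}$ of $G(\Bbbk[\epsilon])\subset\mathbf{\Gamma}(\Bbbk[\epsilon])$ with $\epsilon^2=0$, and an odd $v\in\mathfrak{g}_1$, realized by $e(\tau,v)=\varepsilon+\tau\,\bar{v}$ over $\Bbbk[\tau]$ with $\tau$ odd. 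Writing the group law on $\mathbf{\Gamma}(R)=\mathsf{SAlg}_{\Bbbk}(\mathbf{O},R)$ as convolution in $\operatorname{Hom}(\mathbf{O},R)$, a direct expansion (the higher terms dying because the parameters square to zero) shows that, for tangent vectors $\bar{z},\bar{w}$ represented by $\varepsilon+s\,\bar{z}$ and $\varepsilon+t\,\bar{w}$ with $s,t$ square-zero parameters of parities $|z|,|w|$, the group commutator is exactly $\varepsilon+(-1)^{|z||w|}st\,[\bar{z},\bar{w}]$. I would record this extraction identity first, since the sign $(-1)^{|z||w|}$ is the crux of the bookkeeping.

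With this in hand I would treat the three parity cases. For $x,y\in\operatorname{Lie}(G)$ the commutator is computed inside the subgroup functor $G$ of Remark \ref{rem:O}, so functoriality of $\operatorname{Lie}$ applied to the Hopf surjection $\mathbf{O}\twoheadrightarrow\mathcal{O}(G)$ yields $[(x,0),(y,0)]=([x,y],0)$. For $u,v\in\mathfrak{g}_1$, relation (i) of Lemma \ref{lem:relations} gives $e(\tau_1,u)\,e(\tau_2,v)\,e(\tau_1,u)^{-1}e(\tau_2,v)^{-1}=f(-\tau_1\tau_2,[u,v])$, whose image under $i$ is the character $\varepsilon-\tau_1\tau_2\,i'([u,v])$; comparing with the extraction identity (which in the odd-odd case carries the sign $-1$) gives $[(0,u),(0,v)]=(i'([u,v]),0)$. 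For the mixed case I would conjugate $e(\tau,v)$ by $g=\varepsilon+\epsilon\,\bar{x}$: by Lemma \ref{lem:egfg} together with \eqref{eq:rho}, $g\,e(\tau,v)\,g^{-1}=e(\tau,v^{g^{-1}})=e(\tau,\,v-\epsilon\,\alpha'(x)(v))=e(\tau,v)\,e(-\epsilon\tau,\alpha'(x)(v))$, and since the two factors commute (their product of parameters is $(-\epsilon\tau)\tau=0$) the commutator collapses to $e(-\epsilon\tau,\alpha'(x)(v))$, giving $[(x,0),(0,v)]=(0,-\alpha'(x)(v))$. Together with super-antisymmetry these assemble into the formula of Part (1).

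For the $2$-operation I would use relation (ii) of Lemma \ref{lem:relations} with two odd parameters: $e(\tau_1,v)\,e(\tau_2,v)\,e(\tau_1+\tau_2,v)^{-1}=f(-\tau_1\tau_2,v^{\langle 2\rangle})$. Expanding the left side as a convolution of the characters $\varepsilon+\tau_i\bar{v}$ and $\varepsilon+(\tau_1+\tau_2)\bar{v}$, and using that although $\tau_i^2=0$ one has $(\tau_1+\tau_2)^2=2\tau_1\tau_2$, the surviving second divided-power term produces exactly $-\tau_1\tau_2\,\bar{v}^{\,2}$; matching with the right-hand character $\varepsilon-\tau_1\tau_2\,i'(v^{\langle 2\rangle})$ yields $\bar{v}^{\,2}=i'(v^{\langle 2\rangle})$. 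Part (2) then follows formally: when the arguments lie in $\mathfrak{g}$ the even components are always in $\operatorname{im}(i')$, so the bracket formula of Part (1), combined with the facts that $i'$ is a Lie algebra map (established after \eqref{eq:i_Lie_map}) and that $\alpha'\circ i'$ is the right adjoint representation (remarked before the statement), shows $i'\oplus\operatorname{id}$ preserves the super-bracket, while the computation just made shows it preserves the $2$-operation.

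I expect the main obstacle to be precisely the $2$-operation over a base ring in which $2$ is a zero-divisor. There the super-bracket is blind to $v^{\langle 2\rangle}$, recording only $[(0,v),(0,v)]=2\,\bar{v}^{\,2}=2\,i'(v^{\langle 2\rangle})$, so one cannot cancel the factor $2$; the finer relation (ii), detected through the term $(\tau_1+\tau_2)^2=2\tau_1\tau_2$ that survives despite $\tau_i^2=0$, is genuinely needed. The secondary difficulty is sign bookkeeping: a naive reading of the $\tau_1\tau_2$-coefficient in the odd-odd case produces $-i'([u,v])$, and only the intrinsic sign $(-1)^{|z||w|}$ of the product in $\mathbf{O}^{\bar{*}}$ restores the correct $+i'([u,v])$. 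I would therefore isolate these signs once, in the extraction identity, to keep the three case computations clean.
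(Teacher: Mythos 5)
Your overall route is the paper's own: realize tangent vectors as points $\varepsilon+s\,\bar z$ over square-zero parameters, extract the bracket from group commutators with the sign $(-1)^{|z||w|}$, and feed in relations (i)--(iii) of Lemma \ref{lem:relations} together with Lemma \ref{lem:egfg}. Part (1) as you present it is correct and matches the paper's proof (your mixed-case computation conjugates $e(\tau,v)$ by $g$ rather than the other way around, but that is the same calculation up to antisymmetry).

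There is, however, a genuine error in your treatment of the $2$-operation. In $A=\wedge(\tau_1,\tau_2)$ the parameters $\tau_1,\tau_2$ are \emph{odd}, hence anticommute, so $(\tau_1+\tau_2)^2=\tau_1\tau_2+\tau_2\tau_1=0$, not $2\tau_1\tau_2$. (Indeed, if $(\tau_1+\tau_2)^2$ were nonzero, $e(\tau_1+\tau_2,v)$ would not even fall under Lemma \ref{lem:e(a,v)}, whose proof needs the tensor-square of $(\tau_1+\tau_2)\otimes v$ to vanish; and your proposed mechanism is secretly a divided power $\tfrac{1}{2}(\tau_1+\tau_2)^2$, i.e.\ exactly the division by $2$ you correctly insist on avoiding.) The term $-\tau_1\tau_2\,\bar v^{\,2}$ does not come from any ``second divided-power term'' of the third factor; it comes from the convolution cross-terms between distinct factors. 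Concretely,
\[
\bigl(\varepsilon+\tau_1\bar v\bigr)*\bigl(\varepsilon+\tau_2\bar v\bigr)(h)
=\varepsilon(h)+(\tau_1+\tau_2)\bar v(h)+\tau_1\tau_2\,\bar v(h_{(1)})\bar v(h_{(2)}),
\]
and since the product in $\mathbf{O}^{\bar *}$ carries the sign $(-1)^{|\bar v||\bar v|}$ one has $\bar v(h_{(1)})\bar v(h_{(2)})=-\bar v^{\,2}(h)$; the remaining cross-terms with the factor $\varepsilon-(\tau_1+\tau_2)\bar v$ cancel using $\tau_2\tau_1=-\tau_1\tau_2$, leaving $\varepsilon-\tau_1\tau_2\,\bar v^{\,2}$. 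Matching this against $i\bigl(f(-\tau_1\tau_2,v^{\langle 2\rangle})\bigr)=\varepsilon-\tau_1\tau_2\,i'(v^{\langle 2\rangle})$ gives $\bar v^{\,2}=i'(v^{\langle 2\rangle})$, which is the paper's conclusion; so your statement is recoverable, but the identity you invoke to get it is false and should be replaced by the cross-term computation above. Your closing remark that relation (ii) is genuinely needed because the bracket only sees $2\,v^{\langle 2\rangle}$ is correct, but the claim that it is ``detected through $(\tau_1+\tau_2)^2=2\tau_1\tau_2$'' must be discarded for the same reason.
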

\begin{proof}
(1)\  
We see from Remark \ref{rem:O} that
$\mathcal{O}(G)$ is a quotient Hopf superalgebra of $\mathbf{O}$
through $\operatorname{id} \otimes \varepsilon : 
\mathbf{O} = \mathcal{O}(G) \otimes \wedge(\mathfrak{g}_1^*)\to \mathcal{O}(G)$,
and 
$G$ is thus a closed super-subgroup of $\mathbf{\Gamma}$; see the paragraph following the proof. 
It follows that 
$\operatorname{Lie}(G)$ is a Lie super-subalgebra of $\operatorname{Lie}(\mathbf{\Gamma})$
through the inclusion
$\operatorname{Lie}(G) \to \operatorname{Lie}(G) \oplus \mathfrak{g}_1$.

It remains to compute $[v_1,v_2]$ in $\operatorname{Lie}(\mathbf{\Gamma})$, where $v_1,v_2 \in \mathfrak{g}_1$, or
$v_1 \in \mathfrak{g}_1$, $v_2 \in \operatorname{Lie}(G)$. 
If elements $\tau \in A$ and $v \in \operatorname{Lie}(\mathbf{\Gamma})$ satisfy $\tau^2=0$ and $|\tau|=|v|$,
then 
\[ g(\tau, v) : \mathbf{O} \to A,\quad g(\tau, v)(h)= \varepsilon(h)1+ \tau \, v(h),\; h \in \mathbf{O} \]
is an element in $\mathbf{\Gamma}(A)$ with inverse $g(-\tau, v)$. This coincides with $e(\tau, v)$ if
$|\tau|=|v|=1$. Note that $g(\tau, v)=i(f(\tau,x))$, if $|\tau|=0$ and $v=i'(x)$ with $x \in \mathfrak{g}_0$. 
Given elements $g_1=g(\tau_1, v_1)$, $g_2=g(\tau_2,v_2)$ as above, then
the commutator $(g_1,g_2)= g_1g_2g_1^{-1} g_2^{-1}$ coincides with 
\[
g((-1)^{|\tau_1||\tau_2|}\tau_1 \tau_2,\ [v_1,v_2]),
\] 
from which we will see the desired values of $[v_1,v_2]$. 

First, suppose that $A = \wedge(\tau_1, \tau_2)$, where $\tau_i$, $i=1,2$, are odd variables.
Let $u, v \in \mathfrak{g}_1$. Since we have $(e(\tau_1,u), e(\tau_2, v)) = g(-\tau_1\tau_2,\ i'([u,v]))$ 
by (i) of Lemma \ref{lem:relations}, it follows that
\[ 
[(0,u), (0,v)] = (i'([u,v]), 0). 
\]

Next, suppose that $A = \Bbbk[\tau_1]/(\tau_1^2) \otimes \wedge(\tau_2)$, where $\tau_1$ (resp., $\tau_2$)
is an even (resp., odd) variable. 
Let $y, v \in \operatorname{Lie}(\mathbf{\Gamma})$ with $y$ even and $v$ odd. Note
$g(\pm \tau_1,y)\in G(A_0)$. 
Since we see from (1) of Lemma \ref{lem:egfg} that 
\[ 
(g(-\tau_1,y), e(\tau_2,v))= e(\tau_2,v)^{g(\tau_1,y)}\, e(-\tau_2,v)= e(\tau_1\tau_2, \alpha'(y)(v)), 
\]
it follows that
\[ [(0,v), (y,0)] = (\alpha'(y)(v), 0). \]

(2)\ By Part 1 and the remark given above the proposition 
it remains to prove that the map preserves the $2$-operation. 
Suppose again that $A = \wedge(\tau_1, \tau_2)$. Then we see from (ii) of Lemma \ref{lem:relations} that
\[
g(-\tau_1\tau_2,\ i'(v^{\langle 2 \rangle}))  
= e(\tau_1,v)\, e(\tau_2,v)\, e(-(\tau_1+\tau_2), v).
\]
This last equals $g(-\tau_1\tau_2,\ v^2)$, which proves the desired result.  
\end{proof}

Recall that a \emph{closed super-subgroup} of an affine supergroup $\mathbf{G}$ is a subgroup functor
of $\mathbf{G}$ which is represented by a quotient Hopf superalgebra of $\mathcal{O}(\mathbf{G})$. 

\begin{rem}\label{rem:super_Lie_group}
We have worked in a general situation as above, aiming at an application to super Lie groups
over a complete field of characteristic $\ne 2$; see \cite{HM}. 
Suppose that $\mathfrak{G}$ is such a super Lie group. 
Then a Lie group, $\mathfrak{G}_{red}$, is naturally associated with it. 
Let $\mathcal{R}(\mathfrak{G}_{red})$ be the commutative Hopf algebra of all analytic representative
functions on $\mathfrak{G}_{red}$; this is not necessarily finitely generated. The corresponding
affine group and the Lie superalgebra $\operatorname{Lie}(\mathfrak{G})$ of $\mathfrak{G}$ 
have a natural pairing and an anti-morphism as in \eqref{eq:pairing_alpha}, which satisfy (D1)--(D3). 
In \cite{HM} the resulting affine supergroup $\mathbf{\Gamma}$ is used and is proved to be 
\emph{universal algebraic hull} of $\mathfrak{G}$ (see \cite[p.1141]{HochMostow}) in the sense that
$\mathbf{\Gamma}$-supermodules are naturally identified with analytic $\mathfrak{G}$-supermodules. 
\end{rem}

\section{The category equivalence over a commutative ring}\label{sec:equivalence}

We continue to suppose that $\Bbbk$ is an arbitrary non-zero commutative ring. 

\subsection{Re-proving Gavarini's equivalence}\label{subsec:reprove}
Let $\mathbf{G}$ be an affine supergroup, and set $\mathbf{O} = \mathcal{O}(\mathbf{G})$.   
Recall from \cite[Section 2.5]{MS}, for example, that 
the \emph{associated affine group} $\mathbf{G}_{ev}$ is the restricted group functor
$\mathbf{G}|_{\mathsf{Alg}_{\Bbbk}}$ defined on $\mathsf{Alg}_{\Bbbk}$. 
This is represented by
the largest purely even quotient Hopf superalgebra 
\begin{equation}\label{eq:bar_O}
\overline{\mathbf{O}} := \mathbf{O}/\mathbf{O}\mathbf{O}_1 \, (=\mathbf{O}_0/\mathbf{O}_1^2) 
\end{equation}
of $\mathbf{O}$, so that $\overline{\mathbf{O}}=\mathcal{O}(\mathbf{G}_{ev})$. 
This $\mathbf{G}_{ev}$ is also regarded as the closed super-subgroup of $\mathbf{G}$
which assigns to $A \in \mathsf{SAlg}_{\Bbbk}$ the group $\mathbf{G}(A_0)$.
Let 
\begin{equation}\label{eq:WO}
W^{\mathbf{O}}:= \mathbf{O}_1/\mathbf{O}_0^+\mathbf{O}_1,\ \text{where}\  
\mathbf{O}_0^+= \mathbf{O}_0 \cap \mathbf{O}^+, 
\end{equation}
as in \cite{M1}. 
Since $\mathbf{O}_0^+/((\mathbf{O}_0^+)^2+\mathbf{O}_1^2)
\simeq \overline{\mathbf{O}}^+/(\overline{\mathbf{O}}^+)^2$, we have
\[ \mathbf{O}^+/(\mathbf{O}^+)^2
\simeq \overline{\mathbf{O}}^+/(\overline{\mathbf{O}}^+)^2 \oplus W^{\mathbf{O}},
\]
which is dualized to
\[
\operatorname{Lie}(\mathbf{G})\simeq \operatorname{Lie}(\mathbf{G}_{ev}) \oplus (W^{\mathbf{O}})^*;
\]
see \cite[Lemma 4.3]{MS}. It follows that
\[
\operatorname{Lie}(\mathbf{G})_0 \simeq \operatorname{Lie}(\mathbf{G}_{ev}),\quad
\operatorname{Lie}(\mathbf{G})_1 = (W^{\mathbf{O}})^*.
\]
The former is the canonical Lie-algebra isomorphism induced from the embedding $\overline{\mathbf{O}}^*
\subset \mathbf{O}^{\bar{*}}$, through which we will identify as
\begin{equation*}\label{eq:identify} 
\operatorname{Lie}(\mathbf{G})_0 = \operatorname{Lie}(\mathbf{G}_{ev}).
\end{equation*} 

Just as for \eqref{eq:gc}, the right adjoint action 
$\mathbf{G} \times \mathbf{G}_{ev} \to \mathbf{G}$,\ $(f,g) \mapsto g^{-1}fg$
is dualized to the left $\mathbf{G}_{ev}$-supermodule structure on $\mathbf{O}$ defined by
\begin{equation}\label{eq:gh}
{}^g h = g^{-1}(h_{(1)})\, h_{(2)} \, g(h_{(3)}),\quad 
g \in \mathbf{G}_{ev}(R),\ h \in \mathbf{O},
\end{equation}
where $R \in \mathsf{Alg}_{\Bbbk}$. This makes $\mathbf{O}$ into a Hopf-algebra object in 
the symmetric tensor category $\mathbf{G}_{ev}\text{-}\mathsf{SMod}$ of left $\mathbf{G}_{ev}$-supermodules;
see Section \ref{subsec:G-supermodule}. 

Let us recall the definitions \cite[Definitions 3.2.6,\ 4.1.2]{G} of two categories,
following mostly the formulation of \cite[Appendix]{MS}. 

First, let $(\operatorname{gss}\text{-}\operatorname{fsgroups})_{\Bbbk}$ denote 
the category of the affine supergroups $\mathbf{G}$ such that
when we set $\mathbf{O} = \mathcal{O}(\mathbf{G})$, 
\begin{itemize}
\item[(E1)] $W^{\mathbf{O}}$ is $\Bbbk$-finite free,
\item[(E2)] $\overline{\mathbf{O}}^+/(\overline{\mathbf{O}}^+)^2$ is $\Bbbk$-finite projective, and
\item[(E3)] there exists a counit-preserving isomorphism 
$\mathbf{O}\simeq \overline{\mathbf{O}}\otimes \wedge(W^{\mathbf{O}})$ 
of left $\overline{\mathbf{O}}$-comodule superalgebras.
\end{itemize}
A morphism in $(\operatorname{gss}\text{-}\operatorname{fsgroups})_{\Bbbk}$ is a natural transformation of 
group functors. The conditions above re-number those (E1)--(E3) given in \cite[Appendix]{MS}.

\begin{rem}\label{rem:announce_TPD}  
It will be proved by Theorem \ref{thm:TPD} in the next subsection that an affine supergroup 
$\mathbf{G}$ necessarily satisfies (E3), if it satisfies (E1) and (E2), and if 
$\overline{\mathbf{O}}\, (= \mathcal{O}(\mathbf{G}_{ev}))$ is $\Bbbk$-flat.  
\end{rem}

Next, to define the category $(\operatorname{sHCP})_{\Bbbk}$, 
let $(G, \mathfrak{g})$ be a pair of an affine group $G$ and a Lie superalgebra $\mathfrak{g}$ 
equipped with a $2$-operation. Suppose that $\mathfrak{g}_1$ is $\Bbbk$-finite free, and 
is given a right $G$-module structure. 
Suppose in addition,  
\begin{itemize}
\item[(F1)] $\mathfrak{g}_0 = \operatorname{Lie}(G)$,
\item[(F2)] $\mathcal{O}(G)^+/(\mathcal{O}(G)^+)^2$ is $\Bbbk$-finite projective, so that   
$\mathfrak{g}_0 = \operatorname{Lie}(G)$ is necessarily 
$\Bbbk$-finite projective, and has the right $G$-module structure (see \eqref{eq:transpose}, 
and \eqref{eq:vg} below) determined by
\begin{equation}\label{eq:xg}
x^g(c) = x({}^gc),\quad x\in \mathfrak{g}_0,\ c \in \mathcal{O}(G), 
\end{equation}
where ${}^gc=g^{-1}(c_{(1)})\, c_{(2)}\, g(c_{(3)})$, as in \eqref{eq:gc},  
\item[(F3)] the left $\mathcal{O}(G)$-comodule structure $\mathfrak{g}_1 \to \mathcal{O}(G)\otimes \mathfrak{g}_1$,
$v \mapsto v_{(-1)}\otimes v_{(0)}$ on $\mathfrak{g}_1$ corresponding to the given right $G$-module structure
satisfies
\[
[v, x] = x(v_{(-1)})\, v_{(0)},\quad v \in \mathfrak{g}_1,\ x \in \mathfrak{g}_0,
\] 
\item[(F4)] the restricted super-bracket 
$[\ , \ ]|_{\mathfrak{g}_1 \otimes \mathfrak{g}_1} : \mathfrak{g}_1 \otimes \mathfrak{g}_1\to \mathfrak{g}_0$ 
is $G$-equivariant, and
\item[(F5)] the right $G$-module structure preserves the $2$-operation, or explicitly,
\[
(v_R^{\langle 2 \rangle})^g =(v^g)_R^{\langle 2 \rangle},\quad 
v \in (\mathfrak{g}_1)_R,\ g \in G(R),
\]
where $R \in \mathsf{Alg}_{\Bbbk}$.
\end{itemize}

As for the last (F5) we have replaced the original \cite[Appendix, (F5)]{MS} 
with the equivalent one given soon after. 

Finally, let $(\operatorname{sHCP})_{\Bbbk}$ denote the category of all those pairs $(G, \mathfrak{g})$ 
which satisfy (F1)--(F5) above. 
A morphism $(G, \mathfrak{g}) \to (G', \mathfrak{g}')$ in $(\operatorname{sHCP})_{\Bbbk}$
is a pair $(\gamma, \delta)$ of a morphism  
$\gamma : G \to G'$ of affine groups and a Lie superalgebra map 
$\delta = \delta_0 \oplus \delta_1 : \mathfrak{g} \to \mathfrak{g}'$, such that 
\begin{itemize}
\item[(F6)] the Lie algebra map $\mathrm{Lie}(\gamma)$ induced from $\gamma$ coincides with $\delta_0$,  
\item[(F7)] 
$(\delta_1)_R(v^g)= \delta_1(v)^{\gamma_{_R}(g)}$,\; $v \in \mathfrak{g}_1$,\ $g \in G(R)$,\\
where $R\in \mathsf{Alg}_{\Bbbk}$,\ and
\item[(F8)] $\delta_0(v^{\langle 2 \rangle}) = \delta_1(v)^{\langle 2 \rangle}$,\; $v \in \mathfrak{g}_1$.
\end{itemize} 

Let us reproduce from \cite{G} functors between the two categories just defined,
\[ 
\Phi : (\operatorname{gss}\text{-}\operatorname{fsgroups})_{\Bbbk} \to (\operatorname{sHCP})_{\Bbbk},
\quad
\Psi : (\operatorname{sHCP})_{\Bbbk}\to (\operatorname{gss}\text{-}\operatorname{fsgroups})_{\Bbbk},
\]
which are denoted by $\Phi_g$, $\Psi_g$ in \cite{G}.  

First, let $\mathbf{G}$ be an object in $(\operatorname{gss}\text{-}\operatorname{fsgroups})_{\Bbbk}$. 
Set $\mathbf{O}=\mathcal{O}(\mathbf{G})$. Consider
the pair
\[
(G, \mathfrak{g}) :=(\mathbf{G}_{ev}, \operatorname{Lie}(\mathbf{G})), 
\]
giving to $\mathfrak{g}_1$ the right $G$-module structure determined by 
\begin{equation}\label{eq:vg}
v^g(h) = v({}^gh),\quad v \in \mathfrak{g}_1,\ h\in \mathbf{O},\ g \in G(R), 
\end{equation}
where $R \in \mathsf{Alg}_{\Bbbk}$, and ${}^gh$ is given by \eqref{eq:gh}. 
To see that this indeed defines a right $G$-module structure, note that
the left $G$-module structure on $\mathbf{O}$ given by \eqref{eq:gh} induces such a structure on
$\mathbf{O}^+/(\mathbf{O}^+)^2$, and the induced structure is transposed to $\mathfrak{g}$, 
since $\mathbf{O}^+/(\mathbf{O}^+)^2$ is $\Bbbk$-finite projective by (E1)--(E2); see \eqref{eq:transpose}. 
What is given by \eqref{eq:vg} is precisely the restriction to $\mathfrak{g}_1$
of the transposed structure, while the restriction to $\mathfrak{g}_0$ coincides with the one given by
\eqref{eq:xg}. It is now easy to see that the pair satisfies (F1)--(F4).
Recall that $\mathfrak{g}$ is equipped with the $2$-operation which arises from the square map on $\mathbf{O}^{\bar{*}}$. 
Then one verifies (F5), using the fact that the $G$-module structure on $\mathbf{O}$ 
preserves the coproduct; cf. \cite[Lemma A.9]{MS}. 
Therefore, $(G, \mathfrak{g})\in (\operatorname{sHCP})_{\Bbbk}$. We let
\[
\Phi(\mathbf{G}) = (\mathbf{G}_{ev}, \operatorname{Lie}(\mathbf{G})). 
\] 
One sees easily that this indeed defines a functor. 

\begin{rem}\label{rem:larger_category}  
Following \cite[Defintion 2.3.3]{G}, let 
$(\operatorname{fsgroups})_{\Bbbk}$ 
denote the category of those affine supergroup which satisfy
(E1) and (E2). This includes 
$(\operatorname{gss}\text{-}\operatorname{fsgroups})_{\Bbbk}$
as a full subcategory. 
Note that Condition (E3) was not used above, to define the functor $\Phi$.  
In fact we have thus defined a
functor $\Phi  : (\operatorname{fsgroups})_{\Bbbk} \to (\operatorname{sHCP})_{\Bbbk}$, 
as is formulated by \cite[Proposition 4.1.3]{G}. 
This last functor will be used to prove Theorem \ref{thm:TPD} in the next subsection. 
\end{rem}

Next, to construct $\Psi$, we prove:

\begin{lemma}\label{lem:Gamma}
Let $\mathbf{\Gamma}$ be the affine supergroup constructed in Section \ref{sec:construction},
and set $\mathbf{O}=\mathcal{O}(\mathbf{\Gamma})$. Then we have
\[
\mathbf{\Gamma}_{ev} = G, \quad W^{\mathbf{O}} = \mathfrak{g}_1^*,
\]
where $G$ and $\mathfrak{g}$ are those given in Section \ref{sec:construction} from which 
$\mathbf{\Gamma}$ is constructed.
Moreover, $\mathbf{\Gamma}$ satisfies (E1) and (E3) above. 
\end{lemma}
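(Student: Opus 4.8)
The plan is to read off all four claims directly from the explicit description of $\mathbf{\Gamma}$ and its representing Hopf superalgebra $\mathbf{O}=\mathcal{O}(G)\otimes\wedge(\mathfrak{g}_1^*)$ established in Proposition \ref{prop:Gamma_affine} and Remark \ref{rem:O}. First I would compute $\mathbf{\Gamma}_{ev}$. Recall from Remark \ref{rem:O} that $G$, regarded as the functor $A\mapsto G(A_0)$, is a subgroup functor of $\mathbf{\Gamma}$, and that there is a $G$-equivariant identity-preserving isomorphism $G\times\mathbf{G}_{\mr{a}}^{-n}\simeq\mathbf{\Gamma}$. Restricting to $\mathsf{Alg}_{\Bbbk}$, where every object is purely even and hence has trivial odd part, the factor $\mathbf{G}_{\mr{a}}^{-n}$ (which sends $A\mapsto A_1^n$) vanishes, so that $\mathbf{\Gamma}|_{\mathsf{Alg}_{\Bbbk}}=G$. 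This gives $\mathbf{\Gamma}_{ev}=G$.

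Next I would identify $W^{\mathbf{O}}$. Using $\overline{\mathbf{O}}=\mathcal{O}(\mathbf{\Gamma}_{ev})=\mathcal{O}(G)$, the quotient realizing $\overline{\mathbf{O}}$ is precisely $\mathrm{id}\otimes\varepsilon:\mathbf{O}=\mathcal{O}(G)\otimes\wedge(\mathfrak{g}_1^*)\to\mathcal{O}(G)$, as noted before Proposition \ref{prop:bracket}. With $\mathbf{O}_0$ and $\mathbf{O}_1$ determined by the exterior grading on $\wedge(\mathfrak{g}_1^*)$, the definition $W^{\mathbf{O}}=\mathbf{O}_1/\mathbf{O}_0^+\mathbf{O}_1$ in \eqref{eq:WO} unwinds: $\mathbf{O}_1$ is spanned over $\mathcal{O}(G)$ by the odd part of $\wedge(\mathfrak{g}_1^*)$, and modding out by $\mathbf{O}_0^+\mathbf{O}_1$ (which contains $\mathcal{O}(G)^+\cdot\mathbf{O}_1$ and the products of two or more exterior generators) collapses this to the degree-one piece $\mathfrak{g}_1^*$. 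Hence $W^{\mathbf{O}}\simeq\mathfrak{g}_1^*$. Since $\mathfrak{g}_1$ is $\Bbbk$-finite free by hypothesis (C), so is $\mathfrak{g}_1^*$, giving (E1). Finally, (E3) is immediate from the very form \eqref{eq:OGamma} of $\mathbf{O}$: the decomposition $\mathbf{O}=\mathcal{O}(G)\otimes\wedge(\mathfrak{g}_1^*)=\overline{\mathbf{O}}\otimes\wedge(W^{\mathbf{O}})$ is a counit-preserving isomorphism of left $\overline{\mathbf{O}}$-comodule superalgebras, the comodule structure being the one from the $G$-equivariance in Remark \ref{rem:O}.

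I expect the only subtle point to be the bookkeeping that $W^{\mathbf{O}}\simeq\mathfrak{g}_1^*$ as stated, rather than something twisted, and that the comodule-superalgebra structure in (E3) is genuinely the tensor-factor one and not merely an abstract isomorphism; both follow once one checks that the left $\overline{\mathbf{O}}=\mathcal{O}(G)$-coaction on $\mathbf{O}$ coming from $\mathbf{\Gamma}_{ev}\hookrightarrow\mathbf{\Gamma}$ matches the $G$-action transposed onto $\wedge(\mathfrak{g}_1^*)$, which is exactly the content of the $G$-equivariant isomorphism in Remark \ref{rem:O}. Note that I would \emph{not} claim (E2) here, consistent with the statement, since (E2) concerns $\overline{\mathbf{O}}^+/(\overline{\mathbf{O}}^+)^2$ and is a hypothesis on $G$ rather than something produced by the construction. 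Everything reduces to the explicit description of $\mathbf{O}$, so the proof is essentially a matter of unpacking definitions.
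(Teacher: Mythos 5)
Your proof is correct and follows essentially the same route as the paper's, which simply invokes Remark \ref{rem:O} and the discussion following it to obtain the identification $\mathcal{O}(\mathbf{\Gamma})=\mathcal{O}(G)\otimes\wedge(\mathfrak{g}_1^*)$ as counit-preserving left $\mathcal{O}(G)$-comodule superalgebras and then reads off all the claims. You have merely unpacked the same computation ($\mathbf{\Gamma}_{ev}=G$ via vanishing of the odd part over $\mathsf{Alg}_{\Bbbk}$, $W^{\mathbf{O}}\simeq\mathfrak{g}_1^*$ from \eqref{eq:WO}, and (E1), (E3) from condition (C) and the form \eqref{eq:OGamma}) in more explicit detail, correctly identifying the $G$-equivariance of the isomorphism in Remark \ref{rem:O} as the point that makes (E3) hold with the tensor-factor comodule structure.
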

\begin{proof}
From Remark \ref{rem:O} and the following argument we see that 
\eqref{eq:OGamma} gives an identification 
$\mathcal{O}(\mathbf{\Gamma}) = \mathcal{O}(G) \otimes \wedge(\mathfrak{g}_1^*)$ 
of left $\mathcal{O}(G)$-comodule superalgebras with counit. This implies the desired results.  
\end{proof}

Finally, let $(G, \mathfrak{g}) \in (\operatorname{sHCP})_{\Bbbk}$. 
Choose these $G$ and $\mathfrak{g}$
as those in Section \ref{sec:construction}. 
By (F1)--(F2), $\mathfrak{g}$ satisfies (C); 
see Proposition \ref{prop:PBW}. 
The given right $G$-module structure on $\mathfrak{g}_1$, summed up with 
such a structure on $\mathfrak{g}_0$ determined by \eqref{eq:xg}, gives rise to an 
anti-morphism, say $\alpha$, from $G$ to $\mathsf{Aut}_{Lie}(\mathfrak{g})$; see \cite[Remark 4.5 (2)]{MS}. 
This $\alpha$, together with the canonical pairing
\[ 
\langle \ , \ \rangle : \mathfrak{g}_0 \times \mathcal{O}(G) \to \Bbbk, \quad \langle x,\ c \rangle = x(c),
\]
satisfy (D1)--(D3), as is easily seen. We remark that Lie algebra map 
$i' : \mathfrak{g}_0 \to \operatorname{Lie}(G)$ given by
\eqref{eq:i_Lie_map} is now the identity.
The construction of Section \ref{sec:construction} gives an affine supergroup $\mathbf{\Gamma}$,
which satisfies (E1)--(E3) by Lemma \ref{lem:Gamma}. 
Indeed, by (F2) it satisfies (E2) as well, since 
$\mathbf{\Gamma}_{ev}=G$. Define $\Psi(G, \mathfrak{g})$ to be this $\mathbf{\Gamma}$ in 
$(\operatorname{gss}\text{-}\operatorname{fsgroups})_{\Bbbk}$. As is easily seen,
$\Psi$ defines a functor. 

\begin{theorem}[\text{\cite[Theorem~4.3.14]{G}}]\label{thm:Gavarini's_equivalence}
We have a category equivalence 
\[
(\operatorname{gss}\text{-}\operatorname{fsgroups})_{\Bbbk} \approx (\operatorname{sHCP})_{\Bbbk}.
\]
In fact the functors
$\Phi$ and $\Psi$ constructed above are quasi-inverse to each other. 
\end{theorem}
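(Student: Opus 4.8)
The plan is to establish the category equivalence by showing that the two natural transformations arising from the constructions of $\Phi$ and $\Psi$ are isomorphisms. Since both functors have been defined in the excerpt, the task is to produce natural isomorphisms $\Psi \circ \Phi \cong \mathrm{id}$ on $(\operatorname{gss}\text{-}\operatorname{fsgroups})_{\Bbbk}$ and $\Phi \circ \Psi \cong \mathrm{id}$ on $(\operatorname{sHCP})_{\Bbbk}$.

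First I would treat the easier composite $\Phi \circ \Psi$. Let $(G, \mathfrak{g}) \in (\operatorname{sHCP})_{\Bbbk}$ and let $\mathbf{\Gamma} = \Psi(G, \mathfrak{g})$. By Lemma~\ref{lem:Gamma} we have $\mathbf{\Gamma}_{ev} = G$ on the nose, and the computation preceding Proposition~\ref{prop:bracket} identifies $\operatorname{Lie}(\mathbf{\Gamma}) = \operatorname{Lie}(G) \oplus \mathfrak{g}_1$. Since in the construction of $\Psi$ the map $i' : \mathfrak{g}_0 \to \operatorname{Lie}(G)$ is the identity (as noted just above the theorem), Proposition~\ref{prop:bracket}(2) says that $i' \oplus \operatorname{id} = \operatorname{id} : \mathfrak{g} \to \operatorname{Lie}(\mathbf{\Gamma})$ is a Lie-superalgebra isomorphism preserving the $2$-operation. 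Thus $\Phi(\Psi(G,\mathfrak{g})) = (\mathbf{\Gamma}_{ev}, \operatorname{Lie}(\mathbf{\Gamma})) = (G, \mathfrak{g})$, and it remains only to check that the right $G$-module structure on $\mathfrak{g}_1$ recovered via \eqref{eq:vg} agrees with the original one; this should follow by unwinding \eqref{eq:vg} against the $\mathbf{G}_{ev}$-supermodule structure \eqref{eq:gh} on $\mathbf{O} = \mathcal{O}(G)\otimes \wedge(\mathfrak{g}_1^*)$ and comparing with the defining action $\alpha$ used in the construction. I would then verify naturality in $(G,\mathfrak{g})$, which amounts to checking compatibility with morphisms as described by (F6)--(F8).

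For the other composite, let $\mathbf{G} \in (\operatorname{gss}\text{-}\operatorname{fsgroups})_{\Bbbk}$, set $(G, \mathfrak{g}) = \Phi(\mathbf{G})$, and let $\mathbf{\Gamma} = \Psi(G, \mathfrak{g})$. Here condition (E3) is essential: it supplies a counit-preserving isomorphism $\mathbf{O} \simeq \overline{\mathbf{O}} \otimes \wedge(W^{\mathbf{O}})$ of left $\overline{\mathbf{O}}$-comodule superalgebras, matching the shape \eqref{eq:OGamma} of $\mathcal{O}(\mathbf{\Gamma})$ via $\overline{\mathbf{O}} = \mathcal{O}(G)$ and $W^{\mathbf{O}} = \mathfrak{g}_1^*$. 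The natural way to produce the comparison morphism $\mathbf{\Gamma} \to \mathbf{G}$ is at the level of functor points: using Proposition~\ref{prop:basis}, every element of $\mathbf{\Gamma}(A)$ is uniquely $g\, e(a_1, v_1)\cdots e(a_n, v_n)$ with $g \in G(A_0) = \mathbf{G}(A_0) \subset \mathbf{G}(A)$, so I would send this to the product in $\mathbf{G}(A)$ of $g$ with the grouplikes in $\mathsf{Gpl}(\mathbf{U}(\mathfrak{g})_A)$ paired against $\mathbf{O}$ via the Hopf pairing of Lemma~\ref{lem:group_map}. The main obstacle is verifying that this assignment is a group homomorphism that is natural in $A$, i.e. that it respects the relations (i)--(iv) of Lemma~\ref{lem:relations} and the base-extension relations encoded in $\Xi$; this reduces to checking that the pairing between $\mathbf{U}(\mathfrak{g})_A$ and $\mathbf{O}$ is a genuine Hopf pairing intertwining the adjoint actions, which is exactly the motivation recorded in Lemma~\ref{lem:group_map} and the surrounding discussion.

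The hardest part will be confirming that this comparison map is bijective on $A$-points for all $A \in \mathsf{SAlg}_{\Bbbk}$. On the even part it is the identity $G = \mathbf{G}_{ev}$, so by Lemma~\ref{lem:base_extension}(2) and Proposition~\ref{prop:basis} it suffices to show bijectivity on the coset spaces $G\backslash \mathbf{\Gamma}(A) \to \mathbf{G}_{ev}\backslash \mathbf{G}(A)$, i.e. on the "odd coordinates" $A_1^n$. This is precisely where (E3) is used a second time: the isomorphism \eqref{eq:bijection_to_SAlg} identifies $\mathbf{\Gamma}(A)$ with $\mathsf{SAlg}_{\Bbbk}(\mathcal{O}(G) \otimes \wedge(\mathfrak{g}_1^*), A)$, and under (E3) this matches $\mathsf{SAlg}_{\Bbbk}(\mathbf{O}, A) = \mathbf{G}(A)$ as a set fibered over $\mathbf{G}_{ev}(A_0)$ with fiber $A_1^n$. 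I would check that the comparison map is compatible with these two descriptions, so that bijectivity follows formally, and then confirm naturality in $\mathbf{G}$ using that morphisms of affine supergroups induce compatible morphisms of Harish-Chandra pairs. Finally, I would verify that the two natural isomorphisms are mutually inverse, completing the proof that $\Phi$ and $\Psi$ are quasi-inverse equivalences.
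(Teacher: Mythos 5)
Your overall strategy coincides with the paper's: for $\Phi\circ\Psi$ the identification via Lemma \ref{lem:Gamma} and Proposition \ref{prop:bracket} plus a check of the recovered $G$-action on $\mathfrak{g}_1$ (the paper does this by evaluating $e(\tau,v)^g=1\otimes 1+\tau\,v^g$ at $h\in\mathcal{O}(\mathbf{\Gamma})$ over $A=R\otimes\wedge(\tau)$); for $\Psi\circ\Phi$ the comparison morphism $\mathbf{\Gamma}\to\mathbf{G}$ built from the Hopf pairing $\mathbf{U}(\mathfrak{g})\times\mathcal{O}(\mathbf{G})\to\Bbbk$ of Lemma \ref{lem:group_map}, extended over the semidirect product by $G(A_0)$-equivariance (Lemma \ref{lem:egfg}) and shown to kill $\Xi$.

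The one place where your argument has a real hole is the final claim that ``bijectivity follows formally'' once the comparison map is seen to respect the two fibered descriptions $G(A_0)\times A_1^n$. It does not. The induced map on the fiber $A_1^n$ is \emph{not} the identity in general: writing the image of $e(a_1,v_1)\cdots e(a_n,v_n)$ in the coordinates supplied by a choice of (E3)-splitting of $\mathbf{O}$, the $j$-th odd coordinate is $a_j$ plus correction terms of the form $a_{i_1}a_{i_2}a_{i_3}\langle v_{i_1}v_{i_2}v_{i_3},\,w_j\rangle$ and higher, which depend on the coproduct of $\mathbf{O}$ and on the chosen lift of $W^{\mathbf{O}}$ into $\mathbf{O}_1$. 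Bijectivity therefore requires a genuine argument --- a unipotent/filtration argument showing that a self-map of $A_1^n$ of the form ``identity plus terms involving products of at least three odd coordinates'' is invertible, or equivalently that a Hopf superalgebra map inducing isomorphisms on $\overline{\mathbf{O}}$ and on $W^{\mathbf{O}}$ between objects satisfying (E3) is an isomorphism. The paper delegates exactly this step to \cite[Lemma 4.26]{MS} (see also Remark \ref{rem:lemma4.26} and Proposition \ref{prop:psi_isom}, which reprove it under a flatness hypothesis via the graded algebra $\mathrm{gr}\,\mathbf{O}$). You have correctly set up everything needed to apply that lemma --- identity on the even parts, identity on $\operatorname{Lie}(\cdot)_1$, and (E3) on both sides --- so the gap is fillable, but as written the decisive step is asserted rather than proved.
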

\begin{proof}
Let $\mathbf{G} \in (\operatorname{gss}\text{-}\operatorname{fsgroups})_{\Bbbk}$, and set
\[ 
(G, \mathfrak{g}) = \Phi(\mathbf{G}),\quad \mathbf{\Gamma} = \Psi \circ \Phi(\mathbf{G}).
\]
Just as for \eqref{eq:Hopf_pair} we see that there uniquely exists a Hopf paring 
\[ 
\langle \ , \ \rangle : \mathbf{U}(\mathfrak{g}) \times \mathcal{O}(\mathbf{G}) \to \Bbbk 
\]
such that $\langle z,\ h \rangle = z(h)$, $z \in \mathfrak{g}$, $h \in \mathcal{O}(\mathbf{G})$.  
Suppose $A \in \mathsf{SAlg}_{\Bbbk}$. 
Recall that $\mathbf{\Gamma}(A)$ is a quotient of the group $G(A_0) \ltimes \mathbf{\Sigma}(A)$ 
of semi-direct product. 
Since $\mathbf{\Sigma}(A) \subset \mathsf{Gpl}(\mathbf{U}(\mathfrak{g})_A)$,
the last pairing induces, after base extension to $A$, a group map
\begin{equation}\label{eq:group_map_from_Sigma}
\mathbf{\Sigma}(A) \to \mathsf{SAlg}_{\Bbbk}(\mathcal{O}(\mathbf{G}), A)=\mathbf{G}(A). 
\end{equation}
Lemma \ref{lem:egfg} gives the following equations in $\mathbf{\Sigma}(A)$:
\begin{equation}\label{eq:eavg}
e(a, v)^g=1 \otimes 1 + a\, v^g,\ 
f(\epsilon, x)^g = 1\otimes 1 + \epsilon\, x^g, \quad g \in G(A_0).
\end{equation}
By definitions of $\Phi$ and $\Psi$, the $G$-actions on $\mathfrak{g}$ which appear 
on the right-hand sides are determined by
\[ \langle z^g,\ h\rangle_{A_0} =\langle z,\ {}^gh\rangle_{A_0},\quad z \in \mathfrak{g},\ h \in \mathcal{O}(\mathbf{G}),\ g \in G(A_0), \]
where ${}^gh= g^{-1}(h_{(1)})\, h_{(2)}\, g(h_{(3)})$, as in \eqref{eq:gh}. It follows that the group map 
\eqref{eq:group_map_from_Sigma} is right $G(A_0)$-equivariant, where we suppose that 
$G(A_0)=\mathbf{G}(A_0)$ acts on 
$\mathbf{G}(A)$ by inner automorphisms. 
Therefore, the group map together with the embedding
$G(A_0) \to \mathbf{G}(A)$
uniquely extend to
$G(A_0) \ltimes \mathbf{\Sigma}(A) \to \mathbf{G}(A)$. It factors through 
$\mathbf{\Gamma}(A) \to \mathbf{G}(A)$, since
$\mathbf{\Gamma}(A)$ is the quotient group of $G(A_0) \ltimes \mathbf{\Sigma}(A)$ divided by the relations
\[ (i(f(\epsilon, x)), 1)=(1,f(\epsilon, x)), \quad x \in \mathfrak{g}_0, \ \epsilon \in A_0,\ \epsilon^2=0, \]
and $i : F(A_0) \to G(A_0)$ is now the restriction of the canonical map 
$\mathsf{Gpl}(U(\mathfrak{g}_0)_{A_0}) \to G(A_0)$. 
The group map $\mathbf{\Gamma}(A) \to \mathbf{G}(A)$, being natural in $A$, gives rise to a morphism
$\mathbf{\Gamma} \to \mathbf{G}$.
This morphism is natural in $\mathbf{G}$, as is easily seen. 
In fact, it is a natural isomorphism  
by \cite[Lemma 4.26]{MS}; see also Remark \ref{rem:lemma4.26} below. 
Indeed,
the assumptions required by the cited lemma are satisfied, since $\mathbf{\Gamma}$ and $\mathbf{G}$ satisfy (E3), 
the morphism $\mathbf{\Gamma} \to \mathbf{G}$ restricts to the identity $\mathbf{\Gamma}_{ev} \to \mathbf{G}_{ev}$, 
and the map $\mathfrak{g}_1=\operatorname{Lie}(\mathbf{\Gamma})_1\to \operatorname{Lie}(\mathbf{G})_1$
induced from the pairing above is the identity. 
We conclude $\Psi \circ \Phi \simeq \operatorname{id}$.

Let $(G, \mathfrak{g}) \in (\operatorname{sHCP})_{\Bbbk}$, and set $\mathbf{\Gamma} = \Psi(G, \mathfrak{g})$.
Recall that for this $\mathbf{\Gamma}$, 
the Lie algebra map $i' : \mathfrak{g}_0 \to \operatorname{Lie}(G)$ given by
\eqref{eq:i_Lie_map} is the identity.
By Lemma \ref{lem:Gamma} and Proposition \ref{prop:bracket} we have the natural identifications
\[ G = \mathbf{\Gamma}_{ev}, \quad \mathfrak{g} = \operatorname{Lie}(\mathbf{\Gamma}) \]
of affine groups and of Lie superalgebras equipped with $2$-operation.
Let $R \in \mathsf{Alg}_{\Bbbk}$.
To conclude $\Phi \circ \Psi = \operatorname{id}$, we wish to prove that given 
$v \in \mathfrak{g}_1$ and $g \in G(R)$, the result $v^g\in (\mathfrak{g}_1)_R$ by the 
$G$-action associated with the original 
$(G, \mathfrak{g})$ coincides the one given by \eqref{eq:vg} for $\mathbf{\Gamma}$.
Suppose $A = R \otimes \wedge(\tau)$, where $\tau$ is an odd variable. Note
$A_0 =R$. Just as in \eqref{eq:eavg} we have 
$e(\tau, v)^g=1\otimes 1 + \tau \, v^g$ in $\mathbf{\Gamma}(A)$. 
This, evaluated at $h \in \mathcal{O}(\mathbf{\Gamma})$, gives $\tau\, v(^{g}h)=\tau\, v^g(h)$, 
which shows the desired result. 
\end{proof}

\begin{rem}\label{rem:compare_with_MS}
Let $(G, \mathfrak{g}) \in (\operatorname{sHCP})_{\Bbbk}$, and recall that this $\mathfrak{g}$
is equipped with a $2$-operation, say $(\ )^{\langle 2 \rangle}$. 
Replace 
$(\mathfrak{g}, (\ )^{\langle 2 \rangle})$ with the cocycle deformation 
$({}_{\sigma}\mathfrak{g}, (\ )^{{}_{\sigma}\hspace{-0.5mm}\langle 2 \rangle})$ by $\sigma$
(see Remark \ref{rem:deform_Lie}),
keeping the right $G$-module structure on the odd component unchanged. 
Then we see $(G, {}_{\sigma}\mathfrak{g}) \in (\operatorname{sHCP})_{\Bbbk}$, and that
$(G, \mathfrak{g})\mapsto (G, {}_{\sigma}\mathfrak{g})$ gives an involutive category isomorphism, which
we denote by 
\[
(\operatorname{id}, {}_{\sigma}(\ )) : (\operatorname{sHCP})_{\Bbbk}\to (\operatorname{sHCP})_{\Bbbk}.
\]

As was remarked in Introduction,
Gavarini's category equivalence was re-proved also in \cite[Appendix]{MS},
using an older construction of affine supergroups.  
Due to different choice of tensor products of pairings, the category equivalence
$\mathbf{P}' : (\operatorname{gss}\text{-}\operatorname{fsgroups})_{\Bbbk}\to (\operatorname{sHCP})_{\Bbbk}$
obtained there is slightly different from the $\Phi$ above. In fact, we see 
\begin{equation}\label{eq:up_to_isom}
\mathbf{P}'= (\operatorname{id}, {}_{\sigma}(\ ))\circ \Phi.
\end{equation}
\end{rem}

\begin{rem}\label{rem:compare_with_G} 
The argument of Gavarini \cite{G} seems incomplete at some points, as is pointed out below.
See also \cite[Remark A.11]{MS}. 

(1)\
To construct the functor 
$\Phi_g : (\operatorname{gss}\text{-}\operatorname{fsgroups})_{\Bbbk}\to (\operatorname{sHCP})_{\Bbbk}$, 
and prove $\Phi_g \circ \Psi_g = \operatorname{id}$
in \cite[Proposition 4.1.3, Theorem 4.3.14]{G}, the article takes no account of
$2$-operations or $G$-supermodule structures on Lie superalgebras.

(2)\
The functoriality of $\Psi_g : (\operatorname{sHCP})_{\Bbbk}
\to (\operatorname{gss}\text{-}\operatorname{fsgroups})_{\Bbbk}$ (see \cite[Proposition 4.3.9~(2)]{G})
is proved, indeed,
if one replaces the original definition of $\Psi_g$ by the group 
$G_{\mathcal{P}}(A)\, (= \Psi_g(\mathcal{P}))$ given in 
\cite[Definition 4.3.2]{G} (and referred to before Lemma \ref{lem:free_product}),
with the definition by the alternative $G_{\mathcal{P}}^{\bullet}(A)$ given in \cite[Remark 4.3.3~(c)]{G}. 
Nevertheless, in view of the equations
preceding our Lemma \ref{lem:e(a,v)}, the relation 
$(1+(c\eta) Y)=(1+\eta(cY))$, $c \in \Bbbk$, is missing to define the group 
$G_{\mathcal{P}}^{\bullet}(A)$ in the last cited remark.
\end{rem}

\subsection{Tensor product decomposition}\label{subsec:TPD} 
Let $\mathbf{G}$ be an affine supergroup, and set $\mathbf{O} = \mathcal{O}(\mathbf{G})$.
Recall from \eqref{eq:bar_O} and \eqref{eq:WO} the definitions of $\overline{\mathbf{O}}$ and $W^{\mathbf{O}}$. 
As was announced in Remark \ref{rem:announce_TPD} we prove the following theorem. 
Note that the conclusion below is the same as (E3). 

\begin{theorem}\label{thm:TPD} 
Assume that $\overline{\mathbf{O}}\, (=\mathcal{O}(\mathbf{G}_{ev}))$ is $\Bbbk$-flat. Then there exists a counit-preserving isomorphism 
$\mathbf{O}\simeq \overline{\mathbf{O}}\otimes \wedge(W^{\mathbf{O}})$ 
of left $\overline{\mathbf{O}}$-comodule superalgebras, if 
\begin{itemize}
\item[(E1)] $W^{\mathbf{O}}$ is $\Bbbk$-finite free, and
\item[(E2)] $\overline{\mathbf{O}}^+/(\overline{\mathbf{O}}^+)^2$ is $\Bbbk$-finite projective. 
\end{itemize}
\end{theorem}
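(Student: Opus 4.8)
The plan is to reuse the comparison machinery of Theorem~\ref{thm:Gavarini's_equivalence}, the only novelty being that $\mathbf{G}$ is now \emph{not} assumed to satisfy (E3). First I would observe, following Remark~\ref{rem:larger_category}, that the functor $\Phi$ is already defined on the larger category $(\operatorname{fsgroups})_{\Bbbk}$ of affine supergroups satisfying only (E1) and (E2); hence $(G,\mathfrak{g}):=(\mathbf{G}_{ev},\operatorname{Lie}(\mathbf{G}))$ lies in $(\operatorname{sHCP})_{\Bbbk}$, and I may form $\mathbf{\Gamma}:=\Psi(G,\mathfrak{g})$. By Lemma~\ref{lem:Gamma}, together with $\operatorname{Lie}(\mathbf{G})_1=(W^{\mathbf{O}})^*$ (so that $\mathfrak{g}_1^*\simeq W^{\mathbf{O}}$ by (E1)), the affine supergroup $\mathbf{\Gamma}$ satisfies
\[ \mathcal{O}(\mathbf{\Gamma})=\overline{\mathbf{O}}\otimes\wedge(W^{\mathbf{O}}) \]
as a counit-preserving left $\overline{\mathbf{O}}$-comodule superalgebra, which is exactly the shape demanded by the theorem. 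Thus it suffices to produce a comodule-superalgebra isomorphism $\mathbf{O}\simeq\mathcal{O}(\mathbf{\Gamma})$.

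Next I would build the comparison map exactly as in the proof of Theorem~\ref{thm:Gavarini's_equivalence}: the canonical pairing extends to a Hopf pairing $\mathbf{U}(\mathfrak{g})\times\mathbf{O}\to\Bbbk$, which yields a natural morphism $\mathbf{\Gamma}\to\mathbf{G}$ of group functors, and dually a Hopf superalgebra map
\[ \psi:\mathbf{O}\to\mathcal{O}(\mathbf{\Gamma})=\overline{\mathbf{O}}\otimes\wedge(W^{\mathbf{O}}). \]
By construction $\psi$ is a counit-preserving morphism of left $\overline{\mathbf{O}}$-comodule superalgebras inducing the identity on $\overline{\mathbf{O}}$ and on $W^{\mathbf{O}}$. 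Surjectivity of $\psi$ is the easy half: its image contains $\overline{\mathbf{O}}$ and representatives of the degree-one generators $W^{\mathbf{O}}$, and these generate $\mathcal{O}(\mathbf{\Gamma})$ as an $\overline{\mathbf{O}}$-algebra. The step where the earlier proof invoked \cite[Lemma~4.26]{MS}, which presupposed (E3) for \emph{both} sides, is no longer available, so everything comes down to proving that $\psi$ is injective, using only the flatness of $\overline{\mathbf{O}}$.

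For injectivity I would pass to left $\overline{\mathbf{O}}$-coinvariants. Writing $\rho_L=(\pi\otimes\operatorname{id})\Delta$ for the coaction induced by the quotient $\pi:\mathbf{O}\to\overline{\mathbf{O}}$, both $\mathbf{O}$ and $\mathcal{O}(\mathbf{\Gamma})$ become relative Hopf modules, and $\mathcal{O}(\mathbf{\Gamma})=\overline{\mathbf{O}}\otimes\wedge(W^{\mathbf{O}})$ is cofree. Since $\overline{\mathbf{O}}$ is $\Bbbk$-flat, the fundamental theorem of (relative) Hopf modules should trivialise the coaction and reduce the bijectivity of $\psi$ to that of its restriction $R:=\mathbf{O}^{\operatorname{co}\overline{\mathbf{O}}}\to\wedge(W^{\mathbf{O}})$ to coinvariants; concretely, a subcomodule of $\mathbf{O}$ with vanishing coinvariants must then vanish. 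The heart of the matter is the identification $R\simeq\wedge(W^{\mathbf{O}})$: square-zero coinvariant odd lifts of a free basis of $W^{\mathbf{O}}$ yield an algebra surjection $\wedge(W^{\mathbf{O}})\twoheadrightarrow R$ compatible with $\psi|_{R}$, and I must show it is injective, that is, that the exterior monomials remain $\Bbbk$-independent with no collapse in higher degrees. This last point is the main obstacle: it is precisely the content of \cite[Theorem~4.5]{M1} when $\Bbbk$ is a field, and transporting it to a flat base is where $\overline{\mathbf{O}}$-flatness is genuinely used. I would control this along the exterior-degree filtration, whose associated graded ought to be $\overline{\mathbf{O}}\otimes\wedge(W^{\mathbf{O}})$, reducing the required independence to the field case by flat base change.
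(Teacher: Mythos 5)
Your Step~1 coincides with the paper's: since $\Phi$ is defined under (E1)--(E2) alone (Remark \ref{rem:larger_category}), one forms $\mathbf{\Gamma}=\Psi\circ\Phi(\mathbf{G})$ and reduces the theorem to showing that the comparison map $\psi\colon\mathbf{O}\to\mathcal{O}(\mathbf{\Gamma})=\overline{\mathbf{O}}\otimes\wedge(W^{\mathbf{O}})$ is an isomorphism. The gap lies in your bijectivity argument. You propose to view $\mathbf{O}$, with the coaction $(\pi\otimes\operatorname{id})\Delta$, as a relative Hopf module over $\overline{\mathbf{O}}$ and to invoke the fundamental theorem to ``trivialise the coaction'' and reduce to the coinvariants $R=\mathbf{O}^{\operatorname{co}\overline{\mathbf{O}}}$. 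But $\mathbf{O}$ carries no $\overline{\mathbf{O}}$-module structure: $\overline{\mathbf{O}}$ is a \emph{quotient} of $\mathbf{O}$, not a subalgebra, and an algebra section $\overline{\mathbf{O}}\to\mathbf{O}$ is precisely the datum (E3) you are trying to produce, so this step is circular. Without the module structure the fundamental theorem is unavailable, and the bare coinvariants functor on comodules is not faithful, so ``a subcomodule with vanishing coinvariants must vanish'' fails. Moreover the point you yourself flag as the main obstacle --- that the exterior monomials in $R$ do not collapse in higher degrees --- is left unresolved: ``flat base change to the field case'' is not a legitimate reduction over an arbitrary commutative ring ($\Bbbk$-flatness of $\overline{\mathbf{O}}$ gives no passage to residue fields of $\Bbbk$), and over a field this collapse-freeness is exactly the hard content of \cite[Theorem 4.5]{M1}, proved there by a quite different crossed-product technique.

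The paper circumvents both difficulties by first passing to the associated graded algebra $\mathrm{gr}\,\mathbf{O}=\bigoplus_{n}I^n/I^{n+1}$ for $I=\mathbf{O}\mathbf{O}_1$; the flatness of $O=\overline{\mathbf{O}}$ is used precisely here, to make the category of $O$-super-comodules abelian so that the $I^n$ are subcomodules. In $\mathrm{gr}\,\mathbf{O}$ the degree-zero component \emph{is} $O$, sitting inside as a subalgebra, so $\mathrm{gr}\,\mathbf{O}$ genuinely is an $O$-Hopf module and the fundamental theorem (valid over any base ring) gives $\mathrm{gr}\,\mathbf{O}\simeq O\otimes\mathbf{B}'$ with $\mathbf{B}'$ connected graded and $\mathbf{B}'(1)\simeq W^{\mathbf{O}}$ (Proposition \ref{prop:O_equivalence}, Lemma \ref{lem:composite_isom}). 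The non-collapse in higher degrees is then settled without any reduction to fields: the induced map $\xi$ from $\mathbf{B}'$ to the corresponding object for $\mathcal{O}(\mathbf{\Gamma})$, which equals $\wedge(W^{\mathbf{O}})$ because $\mathbf{\Gamma}$ satisfies (E3), is bijective in degree one, hence admits a section $\wedge(W^{\mathbf{O}})\to\mathbf{B}'$ by the universal property of the exterior algebra (the odd elements of $\mathbf{B}'$ are square-zero), and that section is surjective because $\mathbf{B}'$ is generated in degree one; therefore $\xi$, hence $\mathrm{gr}(\psi)$, hence $\psi$, is an isomorphism (Proposition \ref{prop:psi_isom}). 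To repair your outline, replace the coinvariants of $\mathbf{O}$ itself by this graded construction and the section argument.
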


\begin{rem}\label{rem:compare_TPD}
(1)\
Let $(\operatorname{gss}\text{-}\operatorname{fsgroups})_{\Bbbk}'$ denote the category of the affine
supergroups $\mathbf{G}$ which satisfy (E1), (E2) and 
\begin{itemize}
\item[(E0)] $\mathcal{O}(\mathbf{G}_{ev})$ is $\Bbbk$-flat.
\end{itemize}
This category is a full subcategory of $(\operatorname{gss}\text{-}\operatorname{fsgroups})_{\Bbbk}$
by Theorem \ref{thm:TPD}. Let $(\operatorname{sHCP})_{\Bbbk}'$ denote the full subcategory of 
$(\operatorname{sHCP})_{\Bbbk}$ which consists of the objects $(G, V)$ such that
\begin{itemize}
\item[(F0)] $\mathcal{O}(G)$ is $\Bbbk$-flat.
\end{itemize}
One sees that the category equivalence given by Theorem \ref{thm:Gavarini's_equivalence} restricts to
\[
(\operatorname{gss}\text{-}\operatorname{fsgroups})_{\Bbbk}' \approx (\operatorname{sHCP})_{\Bbbk}'.
\]

(2)\ Suppose that $\Bbbk$ is $2$-torsion free, or namely, $2 : \Bbbk \to \Bbbk$ is an injection. 
In this special situation, essentially the same category equivalence as given by 
Theorem \ref{thm:Gavarini's_equivalence}
was proved by \cite[Theorem 4.22]{MS}; one need not there refer to
$2$-operations. To be more precise, considered there is the category $\mathsf{ASG}$ of the algebraic 
supergroups $\mathbf{G}$ which satisfy (E0) as well as (E1)--(E3); see \cite[Section 4.3]{MS}. However, 
(E3) can be removed from the last conditions, since it is ensured by Theorem \ref{thm:TPD}. To define 
$\mathsf{ASG}$ in \cite{MS}, one can thus weaken the condition that $\mathbf{O}=\mathcal{O}(\mathbf{G})$ is 
\emph{split}
\cite[Definition 2.1]{MS} to the one that $W^{\mathbf{O}}$ is $\Bbbk$-free. 
See \cite[Note added in proof]{MS}.

(3)\ Suppose that $\Bbbk$ is a field of characteristic $\ne 2$. Then the conclusion of Theorem
\ref{thm:TPD} holds for any finitely generated super-commutative Hopf superalgebra $\mathbf{O}$, since
the assumptions are then necessarily satisfied. The result was in fact proved by \cite[Theorem 4.5]{M1} for any 
$\mathbf{O}$ that is not necessarily finitely generated. The proof uses Hopf crossed products,
and is crucial when $\mathbf{O}$ is finitely generated. The proof below gives an alternative
proof of the cited theorem in this crucial case. 
\end{rem}

The rest of this subsection is devoted to proving the theorem. The proof is divided into 3 steps.

\subsubsection{Step 1}
Recall from Remark \ref{rem:larger_category} that the functor $\Phi$ is defined
on the category $(\operatorname{fsgroups})_{\Bbbk}$ 
including $(\operatorname{gss}\text{-}\operatorname{fsgroups})_{\Bbbk}$,
which consists of the affine supergroup satisfying (E1) and (E2).

Let $\mathbf{G} \in (\operatorname{fsgroups})_{\Bbbk}$, and set 
$\mathbf{\Gamma}= \Psi\circ \Phi (\mathbf{G})$,
as in the proof of Theorem \ref{thm:Gavarini's_equivalence}. 
The argument in the cited proof which shows that 
we have a natural morphism $\mathbf{\Gamma} \to \mathbf{G}$ of affine supergroups is valid. 
Let
\begin{equation}\label{eq:phi}  
\phi : \mathbf{\Gamma} \to \mathbf{G}
\end{equation}
denote the morphism. We will prove that  
this $\phi$ is an isomorphism, assuming that $\overline{\mathbf{O}}$ is $\Bbbk$-flat. 
This proves the theorem, since $\mathbf{\Gamma}$ satisfies (E3).

\subsubsection{Step 2}\ We need some general Hopf-algebraic argument. 
Let $\mathbb{N}= \{ 0, 1, 2, \dots \}$ denote the semigroup of non-negative integers. 
An $\mathbb{N}$-graded $\Bbbk$-module $V = \bigoplus_{n=0}^{\infty} V(n)$ is regarded as a $\Bbbk$-supermodule
so that $V_0 = \bigoplus_{n\ \text{even}} V(n)$,\ $V_1 = \bigoplus_{n\ \text{odd}} V(n)$. 
The $\mathbb{N}$-graded $\Bbbk$-modules form a symmetric tensor category 
$\mathsf{GrMod}_{\Bbbk}$ with respect to the super-symmetry.  

Let $\mathsf{ConnAlg}_{\Bbbk}$ denote the category of the commutative algebra objects $\mathbf{B}$ in 
$\mathsf{GrMod}_{\Bbbk}$ such that $\mathbf{B}(0)=\Bbbk$; 
the $\mathsf{Conn}$ expresses ``connected", meaning $\mathbf{B}(0)=\Bbbk$.

Fix a commutative Hopf algebra $O$.
Note that $O$ is a commutative Hopf-algebra object in $\mathsf{GrMod}_{\Bbbk}$ 
which is trivially graded, $O(0)=O$. 
A \emph{graded left $O$-comodule} is a left $O$-comodule object in $\mathsf{GrMod}_{\Bbbk}$. 
The graded left $O$-comodules form a symmetric tensor category $O\text{-}\mathsf{GrComod}$. 
Let $O\text{-}\mathsf{NGrComodAlg}$ denote the category of the commutative algebra objects
$\mathbf{A}$ in $O\text{-}\mathsf{GrComod}$ such that $\mathbf{A}(0)=O$;
the $\mathsf{NGr}$ expresses ``neutrally graded", meaning $\mathbf{A}(0)=O$. 
Note that every
such object is an (ordinary) left $O$-Hopf module \cite[Page 15]{Mo}
with respect to the left multiplication by $O$. 

Here, commutative algebra objects may not satisfy the condition 
that every odd elements should be square-zero. 

Given $\mathbf{B} \in \mathsf{ConnAlg}_{\Bbbk}$, the tensor product 
\[ 
O \otimes \mathbf{B}
\]
of graded algebras, given the left $O$-comodule structure $\Delta \otimes \mathrm{id}_{\mathbf{B}}$, 
is an object
in $O\text{-}\mathsf{NGrComodAlg}$. 
Moreover, this constructs a functor 
\[ O\otimes : \mathsf{ConnAlg}_{\Bbbk} \to O\text{-}\mathsf{NGrComodAlg}. \] 

\begin{prop}\label{prop:O_equivalence}
This functor is a category equivalence.
\end{prop}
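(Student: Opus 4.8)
The plan is to produce an explicit quasi-inverse, namely the functor of \emph{coinvariants}, and then to verify the two required natural isomorphisms by means of the fundamental theorem of Hopf modules \cite{Mo}. Given $\mathbf{A} \in O\text{-}\mathsf{NGrComodAlg}$ with comodule structure $\rho(a) = a_{(-1)} \otimes a_{(0)}$, I would set
\[
\mathbf{A}^{\mathrm{co}\,O} = \{\, a \in \mathbf{A} \mid \rho(a) = 1 \otimes a \,\}.
\]
Since $\rho$ is a graded algebra map and $O$ is trivially graded, hence purely even, $\mathbf{A}^{\mathrm{co}\,O}$ is a graded subalgebra of $\mathbf{A}$, and coinvariance can be tested degree by degree because $\rho(\mathbf{A}(n)) \subset O \otimes \mathbf{A}(n)$. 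In degree $0$ the comodule structure is the regular coaction $\Delta$ on $\mathbf{A}(0) = O$, whose coinvariants are $\Bbbk 1$; consequently $(\mathbf{A}^{\mathrm{co}\,O})(0) = \Bbbk$ and $\mathbf{A}^{\mathrm{co}\,O} \in \mathsf{ConnAlg}_{\Bbbk}$. This assignment is visibly functorial, and I claim it is quasi-inverse to $O \otimes -$.

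For the easy composite, note that for $\mathbf{B} \in \mathsf{ConnAlg}_{\Bbbk}$ the comodule structure on $O \otimes \mathbf{B}$ is the cofree one $\Delta \otimes \mathrm{id}_{\mathbf{B}}$, for which applying $\varepsilon$ to the first tensor factor of the coinvariance relation yields $(O \otimes \mathbf{B})^{\mathrm{co}\,O} = 1 \otimes \mathbf{B}$. The resulting identification $1 \otimes b \leftrightarrow b$ is an isomorphism of connected graded algebras, natural in $\mathbf{B}$.

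For the other composite, I would invoke the fundamental theorem of Hopf modules: each $\mathbf{A}$ is a left $O$-Hopf module under left multiplication by $O = \mathbf{A}(0)$, so the multiplication map
\[
\mu : O \otimes \mathbf{A}^{\mathrm{co}\,O} \to \mathbf{A},\quad o \otimes a \mapsto oa
\]
is bijective, with inverse $a \mapsto a_{(-1)} \otimes P(a_{(0)})$, where $P(a) = \mathcal{S}(a_{(-1)})\, a_{(0)}$ is the canonical projection onto $\mathbf{A}^{\mathrm{co}\,O}$ and $\mathcal{S}$ is the antipode of $O$. It then remains to check that $\mu$ is a morphism in $O\text{-}\mathsf{NGrComodAlg}$: it preserves degree since $O$ sits in degree $0$; it is an algebra map because the even central subalgebra $O$ graded-commutes with everything, so $(oa)(o'a') = oo'aa'$; and it intertwines $\Delta \otimes \mathrm{id}$ with $\rho$, since for $a$ coinvariant one has $\rho(oa) = \Delta(o)(1 \otimes a) = o_{(1)} \otimes o_{(2)} a$, which matches $(\mathrm{id}_O \otimes \mu)(\Delta \otimes \mathrm{id})(o \otimes a)$. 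Naturality in $\mathbf{A}$ is immediate.

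The routine parts are the grading bookkeeping and the algebra- and comodule-compatibilities just indicated. The one step carrying the real content, and which I expect to be the main point to get right, is the identification of the degree-zero comodule structure on $\mathbf{A}(0) = O$ with the regular coaction $\Delta$: this is exactly what forces $\mathbf{A}^{\mathrm{co}\,O}$ to be \emph{connected}, and hence what makes the coinvariants functor land in $\mathsf{ConnAlg}_{\Bbbk}$ rather than merely in graded algebras. Once this is secured, the fundamental theorem of Hopf modules supplies the bijectivity of $\mu$, and the two natural isomorphisms above together establish that $O \otimes -$ and $(-)^{\mathrm{co}\,O}$ are quasi-inverse, proving the asserted equivalence.
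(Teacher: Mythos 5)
Your argument is correct and is essentially the paper's proof: both exhibit an explicit quasi-inverse and reduce the nontrivial composite to the fundamental theorem of Hopf modules, which (as the paper notes) remains valid over an arbitrary base ring $\Bbbk$. The only difference is cosmetic: the paper realizes the quasi-inverse as the quotient $\mathbf{A}\mapsto\mathbf{A}/O^{+}\mathbf{A}$ rather than the coinvariants $\mathbf{A}^{\mathrm{co}\,O}$ --- two naturally isomorphic descriptions --- and accordingly works with the map $a\mapsto a_{(-1)}\otimes\pi(a_{(0)})$ (Takeuchi's adjunction) instead of your multiplication map $\mu$, since the quotient form is the one it needs later when applying the result to $\mathrm{gr}\,\mathbf{O}$.
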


\begin{proof} 
Given $\mathbf{A} \in O\text{-}\mathsf{NGrComodAlg}$,
\[ \mathbf{A}/O^+\mathbf{A} \]
is naturally an object in $\mathsf{ConnAlg}_{\Bbbk}$. One sees that this constructs
a functor. We wish to show that this is a quasi-inverse of the functor $O \otimes$.
We have to prove that the two composites of the functors are naturally isomorphic to the identity functors.
For one composite this is easy. For the remaining, let $\mathbf{A} \in O\text{-}\mathsf{NGrComodAlg}$.
Set $\mathbf{B}= \mathbf{A}/O^+\mathbf{A}$, and let $\pi : \mathbf{A} \to \mathbf{B}$ denote the
natural projection. We see that the left $O$-comodule structure
$ \mathbf{A} \to O \otimes \mathbf{A},\ a \mapsto a_{(-1)}\otimes a_{(0)} $
on $\mathbf{A}$ induces the morphism  
\begin{equation}\label{eq:Hopf_module_isom}
\mathbf{A} \to O \otimes \mathbf{B},\quad a \mapsto a_{(-1)} \otimes \pi(a_{(0)})
\end{equation}
in $O\text{-}\mathsf{NGrComodAlg}$ which is natural in $\mathbf{A}$. 
It remains to prove that this is an isomorphism.
As was remarked before, $\mathbf{A}$ is a left $O$-Hopf module, and the morphism above is 
in fact a morphism of Hopf modules. 
The fundamental theorem
for Hopf modules \cite[1.9.4, Page 15]{Mo} holds over an arbitrary base ring $\Bbbk$,
and can now apply to see that \eqref{eq:Hopf_module_isom} is an isomorphism; 
explicitly, the inverse is induced from $O\otimes\mathbf{A}\to \mathbf{A}$,
$c\otimes a \mapsto cS(a_{(-1)})a_{(0)}$.
\end{proof} 

Let $\mathbf{O}$ be a super-commutative Hopf superalgebra.
Set $O= \overline{\mathbf{O}}$, and assume that this $O$ is $\Bbbk$-flat. 
Let $O\text{-}\mathsf{SComod}$ denote the symmetric tensor category of
left $O$-super-comodules. The flatness assumption ensures that this category is abelian; 
see \cite[Part I, 2.9]{J}.
Indeed, the $\Bbbk$-linear kernel $Z$ of a morphism $V \to U$ turns to be a sub-object of $V$, since
we have $O \otimes Z \subset O\otimes V$, and the composite 
$Z \hookrightarrow V \to O \otimes V$ of the inclusion with the structure on $V$
factors through $O \otimes Z$. 

Let $I =\mathbf{O}\mathbf{O}_1$, so that we have $\mathbf{O}/I=O$. 
Note that $\mathbf{O}$
is naturally a commutative algebra object in $O\text{-}\mathsf{SComod}$, and the super-ideals
$I^n$, $n>0$, are sub-objects of $\mathbf{O}$ in $O\text{-}\mathsf{SComod}$. It follows that 
\[ \mathrm{gr}\, \mathbf{O} = \bigoplus_{n=0}^{\infty} I^n/I^{n+1} \]
is an object in $O\text{-}\mathsf{NGrComodAlg}$. 
To see this, note $\mathrm{gr}\, \mathbf{O}(0)= O$. Moreover, 
$I^n/I^{n+1}= \mathbf{O}_1^n/\mathbf{O}_1^{n+2}$, and so
$\mathrm{gr}\, \mathbf{O}(n)$ is purely odd (resp., even) if $n$ is odd (resp., even).

Let $\mathbf{B} = \mathrm{gr}\, \mathbf{O}/O^+(\mathrm{gr}\, \mathbf{O})$ denote the object in 
$\mathsf{ConnAlg}_{\Bbbk}$ which corresponds to $\mathrm{gr}\, \mathbf{O}$
through the category equivalence given in (the proof of) Proposition \ref{prop:O_equivalence}. 
It is easy to see the following (see \cite[Proposition 4.3 (1)]{M1}):

\begin{lemma}\label{lem:composite_isom}
The composite of natural maps 
\[
W^{\mathbf{O}}=\mathbf{O}_1/\mathbf{O}_0^+\mathbf{O}_1 \to 
\mathbf{O}_1/\mathbf{O}_1^3=\mathrm{gr}\, \mathbf{O}(1) \to \mathbf{B}(1)
\] 
is an isomorphism.
\end{lemma}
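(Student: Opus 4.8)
This is pure definition-chasing once $\mathrm{gr}\,\mathbf{O}(1)$ and $\mathbf{B}(1)$ are made explicit, so the plan is to compute both directly and then check that the natural maps realize the identification.

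First I would pin down the degree-one piece of $\mathrm{gr}\,\mathbf{O}$. Since the counit takes values in the purely even $\Bbbk$, it annihilates $\mathbf{O}_1$, so $\mathbf{O}_1\subseteq\mathbf{O}^+$ and therefore $\mathbf{O}_1^2\subseteq\mathbf{O}_0\cap\mathbf{O}^+=\mathbf{O}_0^+$; multiplying by $\mathbf{O}_1$ gives the containment $\mathbf{O}_1^3=\mathbf{O}_1^2\,\mathbf{O}_1\subseteq\mathbf{O}_0^+\mathbf{O}_1$, which I will use twice. Specializing the identity $I^n/I^{n+1}=\mathbf{O}_1^n/\mathbf{O}_1^{n+2}$ recorded above to $n=1$ gives $\mathrm{gr}\,\mathbf{O}(1)=I/I^2=\mathbf{O}_1/\mathbf{O}_1^3$, a purely odd module.

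Next I would identify $\mathbf{B}(1)$ using the description $\mathbf{B}=\mathrm{gr}\,\mathbf{O}/O^+(\mathrm{gr}\,\mathbf{O})$ coming from Proposition \ref{prop:O_equivalence}. In degree one the only contribution of the ideal $O^+(\mathrm{gr}\,\mathbf{O})$ is the product of $O^+\subseteq\mathrm{gr}\,\mathbf{O}(0)=O$ with $\mathrm{gr}\,\mathbf{O}(1)$, so $\mathbf{B}(1)=\mathrm{gr}\,\mathbf{O}(1)/O^+\!\cdot\mathrm{gr}\,\mathbf{O}(1)$. The module structure of $O=\mathbf{O}_0/\mathbf{O}_1^2$ on $\mathrm{gr}\,\mathbf{O}(1)=\mathbf{O}_1/\mathbf{O}_1^3$ is induced by multiplication in $\mathbf{O}$, i.e. $\bar o\cdot\bar\xi=\overline{o\xi}$; hence $O^+\!\cdot\mathrm{gr}\,\mathbf{O}(1)$ is the image of $\mathbf{O}_0^+\mathbf{O}_1$, which by the containment above equals $\mathbf{O}_0^+\mathbf{O}_1/\mathbf{O}_1^3$. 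Consequently $\mathbf{B}(1)=(\mathbf{O}_1/\mathbf{O}_1^3)\big/(\mathbf{O}_0^+\mathbf{O}_1/\mathbf{O}_1^3)=\mathbf{O}_1/\mathbf{O}_0^+\mathbf{O}_1=W^{\mathbf{O}}$.

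Finally I would note that this computation is exactly the statement: the natural quotient $\mathrm{gr}\,\mathbf{O}(1)=\mathbf{O}_1/\mathbf{O}_1^3\to\mathbf{O}_1/\mathbf{O}_0^+\mathbf{O}_1=W^{\mathbf{O}}$ (well-defined precisely because $\mathbf{O}_1^3\subseteq\mathbf{O}_0^+\mathbf{O}_1$) is identical to the defining surjection $\mathrm{gr}\,\mathbf{O}(1)\to\mathbf{B}(1)$ under the identification just found, so the natural maps relating $W^{\mathbf{O}}$ and $\mathbf{B}(1)$ through $\mathrm{gr}\,\mathbf{O}(1)$ yield the asserted isomorphism. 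I expect no genuine obstacle here: the whole content is the elementary containment $\mathbf{O}_1^3\subseteq\mathbf{O}_0^+\mathbf{O}_1$ together with the correct bookkeeping that the $O^+$-submodule $O^+\mathrm{gr}\,\mathbf{O}(1)$ of $\mathbf{O}_1/\mathbf{O}_1^3$ is precisely the image of $\mathbf{O}_0^+\mathbf{O}_1$.
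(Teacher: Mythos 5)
Your computation is correct and is exactly the ``easy to see'' verification that the paper leaves to the reader (citing \cite[Proposition 4.3 (1)]{M1}): the key containment $\mathbf{O}_1^3\subseteq\mathbf{O}_0^+\mathbf{O}_1$ together with the identifications $\mathrm{gr}\,\mathbf{O}(1)=\mathbf{O}_1/\mathbf{O}_1^3$ and $\mathbf{B}(1)=\mathrm{gr}\,\mathbf{O}(1)/O^+\!\cdot\mathrm{gr}\,\mathbf{O}(1)$ is all that is needed. Your reading of the first arrow as the factorization of $\mathbf{O}_1\to\mathbf{B}(1)$ through $W^{\mathbf{O}}$ (rather than a map induced by the identity on $\mathbf{O}_1$, which would require the false inclusion $\mathbf{O}_0^+\mathbf{O}_1\subseteq\mathbf{O}_1^3$) is the intended one, and your argument establishes the asserted isomorphism.
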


\subsubsection{Step 3}
Let $\mathbf{O}$ be a super-commutative Hopf superalgebra.
Note that the constructions of the associated $\overline{\mathbf{O}}$ and $W^{\mathbf{O}}$ are functorial. 

Assume that $\mathbf{O}$ satisfies (E1) and (E3). 
Assume that $\overline{\mathbf{O}}$ is $\Bbbk$-flat. 
Let $\mathbf{O}'$ be a super-commutative Hopf superalgebra, and let 
$\psi : \mathbf{O}' \to \mathbf{O}$ is a Hopf superalgebra map. It naturally induces
\[ \overline{\psi} : \overline{\mathbf{O}'}\to \overline{\mathbf{O}}, \quad
W^{\psi} : W^{\mathbf{O}'}\to W^{\mathbf{O}}.
\]  

\begin{prop}\label{prop:psi_isom}
If these two maps are bijections, then $\psi$ is an isomorphism.
\end{prop}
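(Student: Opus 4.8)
The plan is to introduce on both $\mathbf{O}$ and $\mathbf{O}'$ the filtrations by the powers of $I=\mathbf{O}\mathbf{O}_1$ and $I'=\mathbf{O}'\mathbf{O}'_1$ considered in Step~2, to prove that the induced map $\mathrm{gr}\,\psi\colon\mathrm{gr}\,\mathbf{O}'\to\mathrm{gr}\,\mathbf{O}$ is an isomorphism, and then to deduce that $\psi$ itself is one. Since $\psi$ is a homomorphism of superalgebras it carries $\mathbf{O}'_1$ into $\mathbf{O}_1$, hence $I'^n$ into $I^n$, so $\psi$ is filtered and $\mathrm{gr}\,\psi$ is defined. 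Because $\overline{\psi}\colon\overline{\mathbf{O}'}\to\overline{\mathbf{O}}$ is bijective and $\overline{\mathbf{O}}$ is $\Bbbk$-flat, $O':=\overline{\mathbf{O}'}$ is $\Bbbk$-flat as well; therefore the whole discussion of Step~2 applies verbatim to $\mathbf{O}'$, giving $\mathrm{gr}\,\mathbf{O}'\in O'\text{-}\mathsf{NGrComodAlg}$ and, by Proposition~\ref{prop:O_equivalence}, a decomposition $\mathrm{gr}\,\mathbf{O}'\simeq O'\otimes\mathbf{B}'$ with $\mathbf{B}'=\mathrm{gr}\,\mathbf{O}'/O'^+(\mathrm{gr}\,\mathbf{O}')$; likewise $\mathrm{gr}\,\mathbf{O}\simeq O\otimes\mathbf{B}$ with $O=\overline{\mathbf{O}}$.

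The key step is to identify $\mathbf{B}$ and $\mathbf{B}'$ and the map between them. Using (E3) one sees $I=O\otimes\wedge^{\ge1}(W^{\mathbf{O}})$, so $\mathrm{gr}\,\mathbf{O}\simeq O\otimes\wedge(W^{\mathbf{O}})$ and $\mathbf{B}\simeq\wedge(W^{\mathbf{O}})$, the exterior algebra with $W^{\mathbf{O}}$ in degree $1$. On the other side, $\mathbf{B}'$ is a connected graded super-commutative algebra whose odd elements square to zero (both properties are inherited from $\mathbf{O}'$), and it is generated in degree $1$ because each $\mathrm{gr}\,\mathbf{O}'(n)$ is spanned by $n$-fold products of odd elements; by Lemma~\ref{lem:composite_isom} its degree-$1$ part is $\mathbf{B}'(1)\simeq W^{\mathbf{O}'}$. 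Hence there is a canonical surjection $\wedge(W^{\mathbf{O}'})\twoheadrightarrow\mathbf{B}'$. The map $\mathbf{B}'\to\mathbf{B}$ induced by $\mathrm{gr}\,\psi$ is, in degree $1$, exactly $W^{\psi}\colon W^{\mathbf{O}'}\to W^{\mathbf{O}}$, which is bijective; so the composite $\wedge(W^{\mathbf{O}'})\to\mathbf{B}'\to\mathbf{B}\simeq\wedge(W^{\mathbf{O}})$ equals $\wedge(W^{\psi})$, an isomorphism of finite free modules. A surjection whose composite with another map is an isomorphism is itself injective, so $\wedge(W^{\mathbf{O}'})\to\mathbf{B}'$ is an isomorphism and therefore so is $\mathbf{B}'\to\mathbf{B}$. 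As $\overline{\psi}$ is an isomorphism too, the naturality of Proposition~\ref{prop:O_equivalence} identifies $\mathrm{gr}\,\psi$ with $\overline{\psi}\otimes(\mathbf{B}'\to\mathbf{B})$, a tensor product of isomorphisms, whence $\mathrm{gr}\,\psi$ is an isomorphism.

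It then remains to pass from $\mathrm{gr}\,\psi$ to $\psi$. Writing $N$ for the rank of $W^{\mathbf{O}}$, the decomposition $\mathbf{O}\simeq\overline{\mathbf{O}}\otimes\wedge(W^{\mathbf{O}})$ shows $I^{N+1}=0$, so the filtration of $\mathbf{O}$ is finite. Surjectivity of $\psi$ is then immediate: since $\mathrm{gr}\,\psi$ is onto and the filtration of $\mathbf{O}'$ is exhaustive, a downward induction on the finite filtration of $\mathbf{O}$ gives $\mathbf{O}=\psi(\mathbf{O}')$. For injectivity one is reduced to separatedness of the $I'$-adic filtration: a nonzero element of $\ker\psi$ lying in $I'^{n}\setminus I'^{n+1}$ would produce a nonzero element of $\ker(\mathrm{gr}\,\psi)$, so in fact $\ker\psi\subseteq\bigcap_n I'^n$, and since $\mathrm{gr}\,\mathbf{O}'(n)=0$ for $n>N$ one has $\bigcap_n I'^n=I'^{N+1}=\ker\psi$.

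The main obstacle is therefore to prove that this intersection vanishes, i.e. that $I'=\mathbf{O}'\mathbf{O}'_1$ is nilpotent. Boundedness of $\mathrm{gr}\,\mathbf{O}'\simeq O'\otimes\wedge(W^{\mathbf{O}'})$ only yields that $I'^{N+1}$ is idempotent, $I'^{N+1}=I'\cdot I'^{N+1}$, which is not by itself enough over an arbitrary $\Bbbk$. I expect to close this gap by showing that $\mathbf{O}'_1$ is finitely generated over $\mathbf{O}'_0$: once finitely many odd generators are available, super-commutativity together with the square-zero condition forces $(\mathbf{O}'_1)^{N+1}=0$, hence $I'^{N+1}=0$. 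This finiteness should follow by viewing $\mathbf{O}'_1$ as a relative Hopf module over the $O'$-comodule algebra $\mathbf{O}'_0\twoheadrightarrow O'$ and invoking the fundamental theorem of Hopf modules (in the variant already used in the proof of Proposition~\ref{prop:O_equivalence}), the flatness of $O'$ being what keeps the relevant category well-behaved; alternatively one can try a Nakayama-type descent after lifting the free $O'$-basis of $\mathbf{O}'_1/\mathbf{O}'^3_1$ furnished by $\mathrm{gr}\,\mathbf{O}'$. Making this finiteness rigorous over an arbitrary base ring is the delicate point of the whole argument.
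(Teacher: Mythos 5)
Up to the point where you conclude that $\mathrm{gr}\,\psi$ is an isomorphism, your argument is essentially the paper's: the paper likewise reduces to $\overline{\psi}=\mathrm{id}$, passes to the associated graded objects for the filtrations by powers of $I=\mathbf{O}\mathbf{O}_1$ and $I'=\mathbf{O}'\mathbf{O}'_1$, uses Proposition \ref{prop:O_equivalence} and Lemma \ref{lem:composite_isom} to identify the induced map $\xi:\mathbf{B}'\to\mathbf{B}$ in degree $1$ with $W^{\psi}$, identifies $\mathbf{B}=\wedge(W^{\mathbf{O}})$ via (E3), and exploits that odd elements of $\mathbf{B}'$ are square-zero and that $\mathbf{B}'$ is generated in degree $1$. (The paper phrases this as the existence of a section of $\xi$ which generation in degree $1$ forces to be surjective; your surjection $\wedge(W^{\mathbf{O}'})\twoheadrightarrow\mathbf{B}'$ composed with $\xi$ is the same device.) Your derivation of the surjectivity of $\psi$ by downward induction from $I^{N+1}=0$ is also correct.

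The proposal is, however, incomplete at the final step, as you acknowledge: injectivity of $\psi$ reduces to $\bigcap_n I'^{\,n}=I'^{\,N+1}=0$, i.e.\ to nilpotency of $I'$, and the graded information alone only yields $I'^{\,N+1}=I'\cdot I'^{\,N+1}$, an idempotent nil ideal, which over an arbitrary $\Bbbk$ need not vanish without further input. Your reduction to finite generation of $\mathbf{O}'_1$ over $\mathbf{O}'_0$ is a valid reduction (a product of $m+1$ odd elements drawn from $m$ square-zero anticommuting generators always vanishes), but neither of the two routes you sketch for that finiteness is carried out, and as written neither obviously closes: $\mathbf{O}'_1$ is not naturally an $\overline{\mathbf{O}'}$-module, so the fundamental theorem of Hopf modules does not apply to it directly, and lifting a generating set of $\mathrm{gr}\,\mathbf{O}'(1)=\mathbf{O}'_1/(\mathbf{O}'_1)^3$ only gives $\mathbf{O}'_1\subset(\text{finitely generated})+(\mathbf{O}'_1)^{N+2}$, which returns you to the same undetermined intersection. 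For comparison, the paper's own proof dispatches this passage in one sentence (``$\mathrm{gr}\,\mathbf{O}'(n)=\mathrm{gr}\,\mathbf{O}(n)=0$ for $n\gg0$, therefore $\psi$ is an isomorphism'') without discussing separatedness of the filtration on $\mathbf{O}'$; so you have correctly isolated the one genuinely delicate point of the argument, but your write-up does not yet supply a proof of it, and until it does the injectivity of $\psi$ remains open in your version.
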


\begin{proof}
We may suppose $\overline{\mathbf{O}'}= \overline{\mathbf{O}}= O$ and $\overline{\psi} =\mathrm{id}_O$,
where $O$ is a commutative $\Bbbk$-flat Hopf algebra. We see that $\psi$ induces a 
morphism $\mathrm{gr}(\psi): \mathrm{gr}\, \mathbf{O}' \to \mathrm{gr}\, \mathbf{O}$ 
in $O\text{-}\mathsf{NGrComodAlg}$.
Let 
$\xi : \mathbf{B}'\to \mathbf{B}$ be the corresponding morphism between the corresponding objects in 
$\mathsf{ConnAlg}_{\Bbbk}$. 

We wish to show that $\xi$ is an isomorphism. 
By Lemma \ref{lem:composite_isom}, $\xi(1) : \mathbf{B}'(1) \to \mathbf{B}(1)$
is identified with $W^{\psi}$. 
Since $\mathbf{O}$ satisfies (E3), we see that 
$\mathrm{gr}\, \mathbf{O}=O \otimes \wedge(W^{\mathbf{O}})$, and so $\mathbf{B}=\wedge(W^{\mathbf{O}})$. 
It follows that $\xi$ has a unique section in $\mathsf{ConnAlg}_{\Bbbk}$, since $\xi(1)$ is an isomorphism,
and $\mathbf{B}'$ is super-commutative, with the odd elements being square-zero. 
Note that $\mathbf{B}'$ is generated by $\mathbf{B}'(1)$, since $\mathrm{gr}\, \mathbf{O}'$
is generated by $O= \mathrm{gr}\, \mathbf{O}'(0)$ and $\mathrm{gr}\, \mathbf{O}'(1)$.
This implies that the section is an isomorphism, proving the desired result.

It follows that $\mathrm{gr}(\psi)$ is an isomorphism, and 
$\mathrm{gr}\, \mathbf{O}'(n)=
\mathrm{gr}\, \mathbf{O}(n)=0$ for $n \gg 0$. Therefore, $\psi$ is an isomorphism. 
\end{proof}

\begin{rem}\label{rem:lemma4.26}
In the situation of Proposition \ref{prop:psi_isom}, suppose in addition that $\mathbf{O}'$ satisfies (E3), 
and remove the assumption that $\overline{\mathbf{O}}$ is $\Bbbk$-flat. Then the same result as
the proposition follows easily from \cite[Lemma 4.26]{MS}. The result was essentially used to prove
\cite[Theorem A.11]{MS} in the last paragraph of the proof.   
\end{rem}

Let us return to the natural morphism $\phi : \mathbf{\Gamma} \to \mathbf{G}$ in 
\eqref{eq:phi}, assuming that
$\overline{\mathcal{O}(\mathbf{G})}$ is $\Bbbk$-flat. Consider 
$\mathcal{O}(\phi) : \mathcal{O}(\mathbf{G})\to \mathcal{O}(\mathbf{\Gamma})$. 
In view of the proof of Theorem \ref{thm:Gavarini's_equivalence} (see the last part of the first paragraph),
the induced
$\overline{\mathcal{O}(\mathbf{G})} \to \overline{\mathcal{O}(\mathbf{\Gamma})}$ and 
$W^{\mathcal{O}(\mathbf{G})} \to W^{\mathcal{O}(\mathbf{\Gamma})}$ are both the identity maps. 
It follows that $\overline{\mathcal{O}(\mathbf{\Gamma})}$ is $\Bbbk$-flat.
Since $\mathbf{\Gamma}$ satisfies (E1) and (E3), Proposition \ref{prop:psi_isom}, 
applied to $\mathcal{O}(\phi)$, proves
that $\phi$ is an isomorphism, as desired.

\section{The category equivalence over a field}\label{sec:equiv_over_fields}

In this section we suppose that $\Bbbk$ is a field of characteristic $\ne 2$. 

\subsection{Reformulation}\label{subsec:equiv_over_fields}
We let $\mathsf{ASG}_{\Bbbk}$ denote the category of algebraic supergroups over $\Bbbk$.
This coincides with the full subcategory of $(\operatorname{gss}\text{-}\operatorname{fsgroups})_{\Bbbk}$
consisting of the objects which are algebraic supergroups.  By \cite[Theorem 4.5]{M1}
(or Theorem \ref{thm:TPD} above)
every object in $\mathsf{ASG}_{\Bbbk}$ satisfies (E3), in particular; see Remark \ref{rem:compare_TPD}~(3). 

Note from Remark \ref{rem:2-torsion_free} (2) that 
we may not refer to $2$-operations on Lie superalgebras. 
The definition of $(\operatorname{sHCP})_{\Bbbk}$ then
contains redundancy in (F1). In other words one can remove $\mathfrak{g}_0$ from the definition
since it is determined by $G$. 
Following \cite{M3, MZ}, we modify the definition of Harish-Chandra pairs as follows; 
one will see in the next subsection
that this modified definition is suitable at least to describe sub-objects. 

A \emph{Harish-Chandra pair} is a pair $(G,V)$ of an algebraic group $G$ and a finite-dimensional
right $G$-module $V$ which is equipped with a $G$-equivariant linear map $[\ , \ ] : V \otimes V \to \mathrm{Lie}(G)$ 
such that 
\begin{itemize}
\item[(G1)] $[v,v']=[v',v]$,\quad $v, v' \in V$,  
\item[(G2)] $v \triangleleft [v,v]= 0$,\quad $v \in V$.  
\end{itemize}
When we say that $[\ , \ ]$ is $G$-equivariant, $\mathrm{Lie}(G)$ is regarded as
a right $G$-module as was done in \eqref{eq:xg}. In (G2), $\triangleleft$ represents the right 
$\mathrm{Lie}(G)$-Lie module structure on $V$ defined by
\begin{equation}\label{eq:v_triangleleft_x} 
v\triangleleft x = x(v_{(-1)})\, v_{(0)}, \quad v \in V,\ x \in \mathrm{Lie}(G), 
\end{equation}
where $V \to \mathcal{O}(G) \otimes V$,\ $v \mapsto v_{(-1)} \otimes v_{(0)}$ denotes the left
$\mathcal{O}(G)$-comodule structure corresponding to the right $G$-module structure on $V$. 
A \emph{morphism} $(\phi, \psi) : (G_1,V_1) \to (G_2,V_2)$ of Harish-Chandra pairs consists 
of a morphism $\phi : G_1 \to G_2$ of algebraic groups and a linear map $\psi : V_1\to V_2$ such that
\begin{itemize}
\item[(G3)] $\psi$ is $G_1$-equivariant, with $V_2$ regarded as a $G_1$-module through $\phi$,
\item[(G4)] $[\psi(v), \psi(v') ] =\mathrm{Lie}(\phi)([v, v'])$, \quad $v, v' \in V $.
\end{itemize}
We let $\mathsf{HCP}_{\Bbbk}$ denote the category of Harish-Chandra pairs over $\Bbbk$.

This category $\mathsf{HCP}_{\Bbbk}$ is isomorphic to
the full subcategory of $(\operatorname{sHCP})_{\Bbbk}$ 
consisting of the objects $(G, \mathfrak{g})$ in which $G$ is an algebraic group.
To describe an explicit category isomorphism, let  
$(G, V) \in \mathsf{HCP}_{\Bbbk}$. 
Define $\mathfrak{g} := \mathrm{Lie}(G)\oplus V \in \mathsf{SMod}_{\Bbbk}$ with
$\mathfrak{g}_0 =\mathrm{Lie}(G)$,\ $\mathfrak{g}_1=V$.
Give to $\mathfrak{g}$ the bracket on $\mathrm{Lie}(G)$ as well as the structure
$[\ , \ ]$ of $(G,V)$, and define $[v, x] := v \triangleleft x$ for $v$, $x$ as in  
\eqref{eq:v_triangleleft_x}. Then $\mathfrak{g}$ turns into a Lie superalgebra. 
Retain the right $G$-module structure on $\mathfrak{g}_1=V$. 
One sees that $(G, V) \mapsto (G, \mathfrak{g})$ gives the desired category isomorphism. 
The inverse is given by
$(G, \mathfrak{g}) \mapsto (G, \mathfrak{g}_1)$, where 
the $\mathfrak{g}_1$ of the latter is given
the restricted super-bracket and the original $G$-module structure. 

Now, let $\mathbf{G} \in \mathsf{ASG}_{\Bbbk}$. Then $\mathbf{G}_{ev}$ is an algebraic group,
and the Lie superalgebra $\operatorname{Lie}(\mathbf{G})$ is finite-dimensional. 
Regard the odd component $\operatorname{Lie}(\mathbf{G})_1$ of the Lie superalgebra as 
the right $\mathbf{G}_{ev}$-module defined by \eqref{eq:vg}. Restrict the super-bracket on 
$\operatorname{Lie}(\mathbf{G})$ to the odd component, and give it to the pair 
$(\mathbf{G}_{ev}, \operatorname{Lie}(\mathbf{G})_1)$. Then the pair turns into a Harish-Chandra pair,
and it corresponds to $\Phi(\mathbf{G})$ in $(\operatorname{sHCP})_{\Bbbk}$ 
through the category isomorphism above. 
By Theorem \ref{thm:Gavarini's_equivalence} we have:

\begin{theorem}\label{thm:equivalence_over_field}
$\mathbf{G} \mapsto
(\mathbf{G}_{ev}, \operatorname{Lie}(\mathbf{G})_1)$ gives a category equivalence 
\[ 
\mathsf{ASG}_{\Bbbk} \overset{\approx}{\longrightarrow} \mathsf{HCP}_{\Bbbk}. 
\]  
\end{theorem}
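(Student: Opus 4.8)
The plan is to deduce this statement by restricting the general equivalence of Theorem~\ref{thm:Gavarini's_equivalence} to appropriate full subcategories and then transporting the target across the category isomorphism established above. The key point is that over a field $\Bbbk$ of characteristic $\ne 2$ all of the finiteness hypotheses (E1)--(E3) are automatic for an algebraic supergroup, so that $\mathsf{ASG}_{\Bbbk}$ sits inside $(\operatorname{gss}\text{-}\operatorname{fsgroups})_{\Bbbk}$ as a full subcategory and Theorem~\ref{thm:Gavarini's_equivalence} may be applied verbatim.

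First I would record that if $\mathbf{G}\in \mathsf{ASG}_{\Bbbk}$, then $\mathbf{O} = \mathcal{O}(\mathbf{G})$ is finitely generated, so $W^{\mathbf{O}}$ and $\overline{\mathbf{O}}^+/(\overline{\mathbf{O}}^+)^2$ are finite-dimensional, hence free and projective, giving (E1)--(E2); and $\overline{\mathbf{O}}$ is $\Bbbk$-flat, so (E3) holds by \cite[Theorem 4.5]{M1} (equivalently by Theorem~\ref{thm:TPD}). Since morphisms on either side are natural transformations of group functors, this realizes $\mathsf{ASG}_{\Bbbk}$ as a full subcategory of $(\operatorname{gss}\text{-}\operatorname{fsgroups})_{\Bbbk}$. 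Next I would check that $\Phi$ and $\Psi$ carry the relevant algebraic subcategories into one another: for $\mathbf{G}\in \mathsf{ASG}_{\Bbbk}$ the group $\mathbf{G}_{ev}$ is algebraic because $\overline{\mathbf{O}} = \mathbf{O}/\mathbf{O}\mathbf{O}_1$ is a finitely generated quotient of $\mathbf{O}$, so $\Phi(\mathbf{G}) = (\mathbf{G}_{ev}, \operatorname{Lie}(\mathbf{G}))$ has algebraic first entry; conversely, for a pair $(G,\mathfrak{g})$ with $G$ algebraic the supergroup $\mathbf{\Gamma} = \Psi(G,\mathfrak{g})$ has $\mathcal{O}(\mathbf{\Gamma}) = \mathcal{O}(G)\otimes \wedge(\mathfrak{g}_1^*)$ finitely generated, since $\mathcal{O}(G)$ is finitely generated and $\mathfrak{g}_1$ is finite-dimensional. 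Hence $\Phi$ and $\Psi$ restrict to mutually quasi-inverse equivalences between $\mathsf{ASG}_{\Bbbk}$ and the full subcategory of $(\operatorname{sHCP})_{\Bbbk}$ whose first entry is algebraic.

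Finally I would compose with the explicit category isomorphism recorded above between $\mathsf{HCP}_{\Bbbk}$ and that same full subcategory of $(\operatorname{sHCP})_{\Bbbk}$, under which $(G,V)\mapsto (G,\operatorname{Lie}(G)\oplus V)$ and, in the reverse direction, $(G,\mathfrak{g})\mapsto (G,\mathfrak{g}_1)$. Chasing an object through, $\Phi(\mathbf{G}) = (\mathbf{G}_{ev},\operatorname{Lie}(\mathbf{G}))$ is sent by the inverse isomorphism to $(\mathbf{G}_{ev},\operatorname{Lie}(\mathbf{G})_1)$, equipped with the restricted super-bracket and the $\mathbf{G}_{ev}$-module structure of \eqref{eq:vg}, which is exactly the functor named in the statement; the quasi-inverse is $\Psi$ read through the same isomorphism. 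I do not expect a genuine obstacle here, as the substantive work is already carried by Theorem~\ref{thm:Gavarini's_equivalence}; the only steps demanding care are confirming that (E1)--(E3) are indeed forced over a field --- so that $\mathsf{ASG}_{\Bbbk}$ is the honest full subcategory on which the earlier theorem bites --- and checking that the passage to $\mathsf{HCP}_{\Bbbk}$ is well defined, which rests on Remark~\ref{rem:2-torsion_free}(2): in characteristic $\ne 2$ the $2$-operation is uniquely recovered as $\frac{1}{2}[v,v]$ and therefore need not be recorded as separate data in a Harish-Chandra pair.
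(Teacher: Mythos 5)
Your proposal is correct and follows essentially the same route as the paper: Section~\ref{subsec:equiv_over_fields} notes that every object of $\mathsf{ASG}_{\Bbbk}$ satisfies (E3) by \cite[Theorem 4.5]{M1} (or Theorem~\ref{thm:TPD}), identifies $\mathsf{HCP}_{\Bbbk}$ with the full subcategory of $(\operatorname{sHCP})_{\Bbbk}$ whose first entry is algebraic via the explicit isomorphism $(G,V)\mapsto (G,\mathrm{Lie}(G)\oplus V)$, and then invokes Theorem~\ref{thm:Gavarini's_equivalence}. Your write-up merely spells out the restriction of $\Phi$ and $\Psi$ to the algebraic subcategories slightly more explicitly than the paper does.
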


Essentially the same result was already given in \cite{M3, MZ}; 
see Remark \ref{rem:coincidence_up_to_isom} for a subtle difference caused by choice of
dualities. 
As an advantage here, we have obtained an explicit quasi-inverse of the functor above,
which is essentially the same as $\Psi$ in Section \ref{subsec:reprove}. 
Therefore, every algebraic supergroup can be realized as $\mathbf{\Gamma}$ constructed 
in Section \ref{sec:construction}. 
This realization is useful when we discuss
group-theoretical properties of algebraic supergroups, 
as will be shown in the next subsection. 

\begin{rem}\label{rem:coincidence_up_to_isom}
A category equivalence between $\mathsf{ASG}_{\Bbbk}$ and $\mathsf{HCP}_{\Bbbk}$
is given by \cite[Theorem 6.5]{M3} and \cite[Theorem 3.2]{MZ}, 
which both reformulate the result \cite[Theorem 29]{M2} formulated in purely Hopf-algebraic terms.
Given $(G, V) \in \mathsf{HCP}_{\Bbbk}$, denote now it by $(G, V, [\ , \ ])$, indicating the structure. 
Replacing $[\ , \ ]$ with $-[\ , \ ]$, we still have $(G, V, -[\ , \ ]) \in \mathsf{HCP}_{\Bbbk}$.
Moreover, $(G, V, [\ , \ ])\mapsto (G, V, -[\ , \ ])$ gives an involutive category isomorphism 
$\mathsf{HCP}_{\Bbbk}\to \mathsf{HCP}_{\Bbbk}$. 
The equivalence given by Theorem 
\ref{thm:equivalence_over_field}, composed with the last isomorphism, coincides
with the equivalence cited above, as is seen from \eqref{eq:up_to_isom}. 
\end{rem}

\subsection{An application}\label{subsec:application}
Throughout in this subsection we let $\mathbf{G}\in \mathsf{ASG}_{\Bbbk}$,
and let $(G, V)$ be the associated Harish-Chandra pair. We suppose that $\mathbf{G}$ is realized as the
$\mathbf{\Gamma}$ which is constructed as in Section \ref{sec:construction} from 
$G$,\ $\mathfrak{g}:=\operatorname{Lie}(\mathbf{G})$,\ the canonical pairing 
$\mathfrak{g}_0 \times \mathcal{O}(G) \to \Bbbk$ and the 
right $G$-supermodule structure on $\mathfrak{g}$ defined 
by \eqref{eq:xg} and \eqref{eq:vg}. 

\begin{definition}\label{def:sub-pair} 
Let $(H, W)$ be a pair of closed subgroup $H \subset G$ and a sub-vector space $W \subset V$. 
We say that $(H,V)$ is a \emph{sub-pair} of the Harish-Chandra pair $(G, V)$, if 
\begin{itemize}
\item[(H1)]
$W$ is $H$-stable in $V$, and
\item[(H2)] 
$[W,W] \subset \operatorname{Lie}(H)$,
\end{itemize}
where $[\ , \ ]$ is the structure of $(G, V)$.
\end{definition}

If $\mathbf{H}$ is
a closed super-subgroup of $\mathbf{G}$, 
then the associated Harish-Chandra pair $(H, W)$, with the right $H$-module structure on $W$ 
as well as
the structure $[\ , \ ]$ forgotten, is a sub-pair of $(G, V)$. 
In this case we say that the sub-pair $(H,W)$ \emph{corresponds to} $\mathbf{H}$. 
The assignment $\mathbf{H}\mapsto (H,W)$ as above gives a bijection from
the set of all closed super-subgroups of $\mathbf{G}$ to the set of all sub-pairs of $(G,V)$. 

\begin{lemma}\label{lem:sub-pair}
Let $(H, W)$ be the sub-pair of $(G,V)$ corresponding to 
a closed super-subgroup $\mathbf{H}\subset \mathbf{G}$.
Given $v \in V$, the following are equivalent:
\begin{itemize}
\item[(i)] $v \in W$;
\item[(ii)] $e(a,v) \in \mathbf{H}(A)$ for arbitrary $A \in \mathsf{SAlg}_{\Bbbk}$ and $a \in A_1$;
\item[(iii)] $e(a, v) \in \mathbf{H}(A)$ for some $A \in \mathsf{SAlg}_{\Bbbk}$ and 
$a \in A_1$ with $a \ne 0$.
\end{itemize}
\end{lemma}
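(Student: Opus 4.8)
The plan is to prove the cycle of implications (i) $\Rightarrow$ (ii) $\Rightarrow$ (iii) $\Rightarrow$ (i). Throughout I would use the fact, recorded just before the lemma, that the closed super-subgroup $\mathbf{H}$ corresponding to the sub-pair $(H,W)$ is itself realized as the $\mathbf{\Gamma}$ constructed in Section \ref{sec:construction} from $H$ and $\mathfrak{h} := \operatorname{Lie}(H)\oplus W$, so that $\mathfrak{h}_1 = W$, and that under this realization the inclusion $\mathbf{H}\hookrightarrow\mathbf{G}$ is the morphism induced by the inclusion of pairs $(H,W)\hookrightarrow (G,V)$; in particular it carries the generator $e(a,w)$ of $\mathbf{H}(A)$ to the element $e(a,w)$ of $\mathbf{G}(A)$ for $w\in W$, $a\in A_1$.

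The two easy implications would come first. For (i) $\Rightarrow$ (ii): if $v\in W = \mathfrak{h}_1$, then $e(a,v)$ is by construction one of the defining even grouplikes generating the group $\mathbf{H}(A)$, for every $A\in\mathsf{SAlg}_{\Bbbk}$ and every $a\in A_1$, and hence lies in $\mathbf{H}(A)$. The implication (ii) $\Rightarrow$ (iii) is immediate, taking for instance $A = \wedge(\tau)$, the exterior algebra on a single odd variable $\tau$, together with $a=\tau\neq 0$.

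The substance is (iii) $\Rightarrow$ (i). First I would fix a $\Bbbk$-basis $v_1,\dots,v_n$ of $V=\mathfrak{g}_1$ \emph{adapted} to $W$, meaning that $v_1,\dots,v_m$ is a basis of $W$. Writing $v=\sum_{i=1}^n \lambda_i v_i$ with $\lambda_i\in\Bbbk$, the identity recorded in the proof of Proposition \ref{prop:PhiA} gives $e(a,v)=e(\lambda_1 a,v_1)\cdots e(\lambda_n a,v_n)$; moreover these factors commute pairwise, since the correction terms $f\big(-(\lambda_i a)(\lambda_j a),[v_i,v_j]\big)$ of Lemma \ref{lem:relations}(i) are trivial once $a^2=0$. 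Thus the canonical expression of $e(a,v)$ afforded by Proposition \ref{prop:basis} for $\mathbf{G}(A)$ has trivial $G(A_0)$-part and odd coefficients $\lambda_1 a,\dots,\lambda_n a$. Applying Proposition \ref{prop:basis} to $\mathbf{H}$ with the adapted basis $v_1,\dots,v_m$ of $W$, the subgroup $\mathbf{H}(A)$ corresponds, under the bijection $\mathbf{G}(A)\simeq G(A_0)\times A_1^n$, exactly to the locus $H(A_0)\times A_1^m\times\{0\}^{\,n-m}$. Hence $e(a,v)\in\mathbf{H}(A)$ forces, by the uniqueness clause of Proposition \ref{prop:basis}, that $\lambda_i a=0$ for all $i>m$. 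Since $\Bbbk$ is a field and $a\neq 0$, a nonzero $\lambda_i$ would be invertible and yield $a=0$; therefore $\lambda_i=0$ for $i>m$, i.e. $v=\sum_{i=1}^m \lambda_i v_i\in W$.

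The main obstacle I anticipate is the bookkeeping identification of $\mathbf{H}(A)$ with the sublocus $H(A_0)\times A_1^m\times\{0\}^{\,n-m}$ of $\mathbf{G}(A)\simeq G(A_0)\times A_1^n$. This rests on the compatibility of the $\mathbf{\Gamma}$-realization with the closed embedding $\mathbf{H}\hookrightarrow\mathbf{G}$, together with the uniqueness of the canonical expression in Proposition \ref{prop:basis}; once that identification is secured, everything else is formal, and the final step $\lambda_i a=0\Rightarrow \lambda_i=0$ uses only that $a\neq 0$ over a field.
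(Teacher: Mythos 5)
Your proposal is correct and follows essentially the same route as the paper: the paper also dispatches (i)$\Rightarrow$(ii)$\Rightarrow$(iii) as obvious and proves (iii)$\Rightarrow$(i) by comparing the unique canonical expressions of $e(a,v)$ in $\mathbf{H}(A)$ and in $\mathbf{G}(A)$ afforded by Proposition \ref{prop:basis} with respect to a basis of $V$ extending one of $W$. The only cosmetic difference is that the paper argues by contradiction, adjoining $v\notin W$ itself to the basis and concluding $a=0$ directly, whereas you expand $v$ in an adapted basis and kill the coefficients $\lambda_i$ for $i>m$; both hinge on the same uniqueness clause and the same compatibility of the two $\mathbf{\Gamma}$-realizations.
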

\begin{proof}
We only prove (iii) $\Rightarrow$ (i), since the rest is obvious.

Suppose that $e(a, v) \in \mathbf{H}(A)$ with $a \in A_1$, but $v \notin W$. 
Given an arbitrary basis $w_1,\dots,w_r$ of $W$, one can extend it, adding $v$ and others, 
to a basis $w_1,\dots, w_r, v, \dots$ of $V$. By Proposition \ref{prop:basis}, 
$e(a, v)$, being an element in $\mathbf{H}(A)$,
is expressed uniquely in the form
\begin{equation}\label{eq:two_expressions}
e(a,v)= h\, e(a_1,w_1)\dots e(a_r, w_r), 
\end{equation}
where $h \in H(A_0)$ and $a_i \in A_1$, $1 \le i \le r$. The cited proposition gives 
analogous expressions
of elements of $\mathbf{G}(A)$ which use the extended basis. Regarding \eqref{eq:two_expressions} as two
such expressions of one element, we have $a=0$. 
\end{proof}

Just as in the non-super situation we define as follows, and obtain the next lemma; see 
\cite[Part I, 2.6]{J}. 

Let $\mathbf{H}\subset \mathbf{G}$ be a closed super-subgroup.
The \emph{normalizer} $\mathcal{N}_{\mathbf{G}}(\mathbf{H})$
(resp., the \emph{centralizer} $\mathcal{Z}_{\mathbf{G}}(\mathbf{H})$) of $\mathbf{H}$ in $\mathbf{G}$
is the subgroup functor of $\mathbf{G}$ whose $A$-points consists of the elements
$g \in \mathbf{G}(A)$ such that for every $A \to A'$ in $\mathsf{SAlg}_{\Bbbk}$, the natural
image $g_{A'}$ of $g$ in $\mathbf{G}(A')$ normalizes (resp., centralizes) $\mathbf{H}(A')$. 

\begin{lemma}\label{lem:normal_center}
$\mathcal{N}_{\mathbf{G}}(\mathbf{H})$ and $\mathcal{Z}_{\mathbf{G}}(\mathbf{H})$
are closed super-subgroups of $\mathbf{G}$. Moreover, 
$\mathcal{N}_{\mathbf{G}}(\mathbf{H})$ (resp., $\mathcal{Z}_{\mathbf{G}}(\mathbf{H})$)
is the largest closed super-subgroup of $\mathbf{G}$ whose $A$-points normalize (resp., centralize)
$\mathbf{H}(A)$ for every $A \in \mathsf{SAlg}_{\Bbbk}$. 
\end{lemma}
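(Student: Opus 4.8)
The plan is to establish this as the direct super-analogue of the classical representability of normalizers and centralizers in \cite[Part~I, 2.6]{J}. Write $\mathbf{O} = \mathcal{O}(\mathbf{G})$, let $I \subset \mathbf{O}$ be the Hopf super-ideal with $\mathcal{O}(\mathbf{H}) = \mathbf{O}/I$, and let $\pi : \mathbf{O} \to \mathcal{O}(\mathbf{H})$ be the projection. That $\mathcal{N}_{\mathbf{G}}(\mathbf{H})$ and $\mathcal{Z}_{\mathbf{G}}(\mathbf{H})$ are subgroup functors is immediate from the definitions, so the substance lies in showing that they are represented by quotient Hopf superalgebras of $\mathbf{O}$.

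First I would eliminate the quantifier ``for every $A \to A'$'' in favour of a single \emph{universal} test point. Put $A^{\sharp} = A \otimes \mathcal{O}(\mathbf{H})$ and let $x_{\mathrm{univ}} \in \mathbf{H}(A^{\sharp})$ be the base extension to $A^{\sharp}$ of the generic point $\pi : \mathbf{O} \to \mathcal{O}(\mathbf{H})$. Every pair $(A', x)$ with $A \to A'$ in $\mathsf{SAlg}_{\Bbbk}$ and $x \in \mathbf{H}(A')$ is the image of $(A^{\sharp}, x_{\mathrm{univ}})$ under the $A$-algebra map $A^{\sharp} \to A'$ sending $a \otimes h \mapsto (\text{image of }a)\, x(h)$, and the properties ``normalizes'' and ``centralizes'' are preserved under such push-forward. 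Hence $g \in \mathbf{G}(A)$ lies in the normalizer (resp. centralizer) if and only if the single element $g_{A^{\sharp}}\, x_{\mathrm{univ}}\, g_{A^{\sharp}}^{-1}$ lies in $\mathbf{H}(A^{\sharp})$ (resp. equals $x_{\mathrm{univ}}$).

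Next I would convert each of these conditions into the vanishing of a family of coordinate functions. The element $g_{A^{\sharp}}\, x_{\mathrm{univ}}\, g_{A^{\sharp}}^{-1}$ is the superalgebra map $\phi_g : \mathbf{O} \to A \otimes \mathcal{O}(\mathbf{H})$ given, up to the super-sign encoded in the adjoint coaction \eqref{eq:gh}, by $f \mapsto \sum g\big(f_{(1)}\mathcal{S}(f_{(3)})\big) \otimes \pi(f_{(2)})$. Fixing a $\Bbbk$-basis of $\mathcal{O}(\mathbf{H})$ (recall $\Bbbk$ is a field here, so everything is free) and expanding $\phi_g$ in that basis rewrites the normalizer condition $\phi_g(I)=0$ as the vanishing $g(c)=0$ of an explicit family of elements $c \in \mathbf{O}$ built from $I$ via the coproduct and antipode; for the centralizer one instead equates $\phi_g$ with the map $f \mapsto 1 \otimes \pi(f)$ representing $x_{\mathrm{univ}}$, again a system of linear vanishing conditions on the values $g(-)$. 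In either case the resulting subfunctor is cut out by an ideal $J \subset \mathbf{O}$, hence is a closed subfunctor represented by $\mathbf{O}/J$. Because it is simultaneously a subgroup functor, the comultiplication, counit and antipode of $\mathbf{O}$ descend to $\mathbf{O}/J$, so $J$ is automatically a Hopf super-ideal and $\mathcal{N}_{\mathbf{G}}(\mathbf{H})$, $\mathcal{Z}_{\mathbf{G}}(\mathbf{H})$ are closed super-subgroups.

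The maximality is then formal and uses only functoriality. If $\mathbf{K} \subseteq \mathbf{G}$ is any closed super-subgroup with $\mathbf{K}(A)$ normalizing (resp. centralizing) $\mathbf{H}(A)$ for every $A$, then for $g \in \mathbf{K}(A)$ and any $A \to A'$ the image $g_{A'}$ lies in $\mathbf{K}(A')$, which normalizes (resp. centralizes) $\mathbf{H}(A')$; thus $g$ meets the universal condition and $\mathbf{K} \subseteq \mathcal{N}_{\mathbf{G}}(\mathbf{H})$ (resp. $\mathbf{K} \subseteq \mathcal{Z}_{\mathbf{G}}(\mathbf{H})$). Taking $A'=A$ shows $\mathcal{N}_{\mathbf{G}}(\mathbf{H})$ itself normalizes $\mathbf{H}(A)$ for each $A$, so it is the largest such, and likewise for the centralizer. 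The one genuinely delicate step is the representability above: one must verify that the universal-point reduction remains valid in the super setting and that the super-signs coming from \eqref{eq:gh} and from the super-commutativity of $A^{\sharp}$ do not obstruct packaging the conditions into honest ideals. These are bookkeeping points, and the descent of the Hopf structure to a closed subgroup functor is identical to the classical argument.
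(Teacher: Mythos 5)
Your argument is correct and is essentially the paper's own: the paper offers no proof beyond the citation to \cite[Part I, 2.6]{J}, and your universal-point reduction (testing against $x_{\mathrm{univ}}\in\mathbf{H}(A\otimes\mathcal{O}(\mathbf{H}))$ and converting the condition into linear vanishing conditions $g(c)=0$ cutting out a closed subfunctor, whose defining ideal is then automatically a Hopf super-ideal) is precisely the classical argument being invoked there, transported to the super setting with only sign bookkeeping. The one point worth flagging is that your universal-point criterion as stated cuts out the transporter $\{g : g\mathbf{H}g^{-1}\subseteq \mathbf{H}\}$ rather than the normalizer; one should either intersect with the analogous closed condition on $g^{-1}$, or observe that since $\mathbf{H}$ is algebraic (so $\mathcal{O}(\mathbf{H})$ is Noetherian) a closed immersion of $\mathbf{H}$ into itself is an isomorphism, whence the two subfunctors coincide.
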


Let $\mathbf{H}\subset \mathbf{G}$ be a closed super-subgroup, and let $(H,W)$ be the corresponding 
sub-pair of $(G, V)$. 

Recall that the \emph{stabilizer} $\mathrm{Stab}_G(W)$ 
(resp., the \emph{centralizer} $\mathrm{Cent}_G(W)$) of $W$ in $G$
is the largest closed subgroup of $G$ that makes $W$ into a module
(resp., a trivial module) over it.

Let $\rho_H : V \to \mathcal{O}(H)\otimes V$ denote the left $\mathcal{O}(H)$-comodule
structure on $V$ corresponding to the restricted right $H$-module structure on $V$. 
Define
\[
\mathrm{Inv}_H(V/W):=\{ v \in V \mid \rho_H(v)-1 \otimes v\in \mathcal{O}(H)\otimes W\}.
\]
This is the largest $H$-submodule of $V$ including $W$ whose quotient $H$-module
by $W$ is trivial. The definition makes sense, replacing $W$ with any $H$-submodule, say $U$, of $V$. 
We will use $\mathrm{Inv}_H(V)=\mathrm{Inv}_H(V/0)$ when $U=0$.

When $L=\mathrm{Lie}(H)$ or $0$, we define
\[
(L:W):=\{ v \in V \mid [v, W]\subset L\},
\]
where $[\ , \ ]$ is the structure of $(G,V)$. 

\begin{theorem}\label{thm:normal_central}
Let $\mathbf{H} \subset \mathbf{G}$ and $(H,W) \subset (G,V)$ be as above. 
\begin{itemize}
\item[(1)] The sub-pair of $(G,V)$ corresponding to $\mathcal{N}_{\mathbf{G}}(\mathbf{H})$ is
\[
( \mathcal{N}_{G}(H)\cap \mathrm{Stab}_G(W),\ \mathrm{Inv}_H(V/W) \cap (\mathrm{Lie}(H) : W) ) .
\]
\item[(2)] The sub-pair of $(G,V)$ corresponding to $\mathcal{Z}_{\mathbf{G}}(\mathbf{H})$ is
\[
( \mathcal{Z}_{G}(H)\cap \mathrm{Cent}_G(W),\ \mathrm{Inv}_H(V) \cap (0 : W) ).
\]
\end{itemize}
\end{theorem}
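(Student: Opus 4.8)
The plan is to use the bijection (stated just before the theorem) between closed super-subgroups of $\mathbf{G}$ and sub-pairs of $(G,V)$, so that it suffices to identify the even and odd components of the sub-pair attached to $\mathcal{N}_{\mathbf{G}}(\mathbf{H})$ (resp.\ $\mathcal{Z}_{\mathbf{G}}(\mathbf{H})$), which is a closed super-subgroup by Lemma \ref{lem:normal_center}. The even component is found by testing $G(A_0)$-points, and the odd component by the criterion of Lemma \ref{lem:sub-pair}: a vector $v\in V$ lies in the odd part exactly when $e(a,v)$ normalizes (resp.\ centralizes) $\mathbf{H}(A')$ for every $A'\in\mathsf{SAlg}_{\Bbbk}$ and every $a\in A'_1$. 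Throughout I would reduce the normalizing/centralizing condition to its effect on a generating set of $\mathbf{H}(A')$: by the normal form of Proposition \ref{prop:basis} applied to $\mathbf{H}$ (realized as the $\mathbf{\Gamma}$ attached to $(H,W)$), the group $\mathbf{H}(A')$ is generated by $H(A'_0)$ together with the even grouplikes $e(b,w)$, $w\in W$, $b\in A'_1$.

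For the even parts, fix $g\in G(A_0)$. Conjugating the two kinds of generators, Lemma \ref{lem:egfg} gives $g^{-1}e(b,w)g=e(b,w^g)$, and this lies in $\mathbf{H}(A')$ precisely when $w^g\in A'_0\otimes W$, i.e.\ $g\in\mathrm{Stab}_G(W)$ (resp., for the centralizer, $w^g=w$, i.e.\ $g\in\mathrm{Cent}_G(W)$); conjugating $H(A'_0)$ forces $g\in\mathcal{N}_G(H)$ (resp.\ $g\in\mathcal{Z}_G(H)$). Intersecting, the even parts are exactly $\mathcal{N}_G(H)\cap\mathrm{Stab}_G(W)$ and $\mathcal{Z}_G(H)\cap\mathrm{Cent}_G(W)$, as claimed.

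The odd parts are the heart of the argument and rest on two conjugation identities, both obtained from Lemma \ref{lem:relations} together with the square-zero relations $a^2=0$ (whence $ab\cdot a=0$ and $e(-a,v)e(a,v)=1$): writing $v^h-v$ for the (in general $A'_0$-coefficient) vector produced by the $H$-action, one finds
\[
e(-a,v)\,e(b,w)\,e(a,v)=f(ab,[v,w])\,e(b,w),\qquad
e(-a,v)\,h\,e(a,v)=h\,e\big(-a,\,v^h-v\big).
\]
For the normalizer I would argue that the first element lies in $\mathbf{H}(A')$ iff $f(ab,[v,w])\in\mathbf{H}(A')$, which (taking $ab\neq0$, e.g.\ over $A'=\wedge(a,b)$) is equivalent to $[v,w]\in\operatorname{Lie}(H)$, giving $v\in(\operatorname{Lie}(H):W)$; and the second lies in $\mathbf{H}(A')$ iff $e(-a,v^h-v)\in\mathbf{H}(A')$, which by (an $A'_0$-coefficient extension of) Lemma \ref{lem:sub-pair} forces $v^h-v\in A'_0\otimes W$ for all $h$, i.e.\ $v\in\mathrm{Inv}_H(V/W)$. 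The converse is immediate from the same identities, so the odd part of $\mathcal{N}_{\mathbf{G}}(\mathbf{H})$ is $\mathrm{Inv}_H(V/W)\cap(\operatorname{Lie}(H):W)$. For the centralizer one replaces ``lies in $\mathbf{H}(A')$'' by ``equals the conjugated generator'': the first identity equals $e(b,w)$ iff $[v,w]=0$, i.e.\ $v\in(0:W)$, and the second equals $h$ iff $v^h=v$ for all $h$, i.e.\ $v\in\mathrm{Inv}_H(V)$, yielding $\mathrm{Inv}_H(V)\cap(0:W)$.

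The main obstacle I anticipate is not the group-theoretic bookkeeping but the faithful translation of these ``pointwise, for all $A'$'' conditions into the scheme- and comodule-theoretic objects in the statement. Concretely, I would need the small lemma that, for $x\in\operatorname{Lie}(G)$ and a square-zero $\epsilon\neq0$, the grouplike $f(\epsilon,x)$ lies in $\mathbf{H}(A')$ if and only if $x\in\operatorname{Lie}(H)$ (a dual-numbers computation, where $\operatorname{char}\Bbbk\neq2$ and finite-dimensionality over a field are used), and the extension of Lemma \ref{lem:sub-pair} to odd vectors with $A'_0$-coefficients, proved by the uniqueness of the normal form of Proposition \ref{prop:basis} after completing a basis of $W$ to one of $V$. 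I would also check that testing conjugation of generators for all $a$ and all $A'$ already delivers the two-sided normalizing condition (for the centralizer this is automatic, as commuting is symmetric), and that the family of conditions $v^h-v\in A'_0\otimes W$ over all $A'$ assembles precisely into the comodule definition of $\mathrm{Inv}_H(V/W)$.
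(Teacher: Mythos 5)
Your proposal is correct and follows essentially the same route as the paper's proof: both reduce the normalizing/centralizing condition to conjugation of the generators $H(A'_0)$ and $e(b,w)$ of $\mathbf{H}(A')$, use the relations of Lemma \ref{lem:relations} and the conjugation formulas of Lemma \ref{lem:egfg} to obtain exactly your two identities, and then invoke the normal form of Proposition \ref{prop:basis} together with the membership criterion of Lemma \ref{lem:sub-pair} (the paper carries out your ``assembly into the comodule condition'' via the generic point $h=\operatorname{id}_{\mathcal{O}(H)}$ over $A=\mathcal{O}(H)\otimes\wedge(\tau)$). The only cosmetic difference is that the paper obtains three of the forward inclusions more softly, by applying the functors $(\ )_{ev}$ and $\operatorname{Lie}$ to the statement that $\mathcal{N}_{\mathbf{G}}(\mathbf{H})$ normalizes $\mathbf{H}$, rather than by generator computations.
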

\begin{proof}
In each part let us denote by $(K,Z)$ the desired sub-pair. 

(1)\
First, we prove
\begin{equation}\label{eq:one_direction_inclusions}
K \subset \mathcal{N}_{G}(H)\cap \mathrm{Stab}_G(W),\quad 
Z \subset \mathrm{Inv}_H(V/W) \cap (\mathrm{Lie}(H) : W). 
\end{equation}

Note that $K$ normalizes $\mathbf{H}$ in $\mathbf{G}$.
Then it follows that $K$ normalizes $H=\mathbf{H}_{ev}$ in $G=\mathbf{G}_{ev}$, 
whence $K \subset \mathcal{N}_{G}(H)$.
It also follows that the right $G$-supermodule structure on $\operatorname{Lie}(\mathbf{G})$, restricted
to a right $K$-supermodule structure, 
stabilizes $\operatorname{Lie}(\mathbf{H})$, whence $K \subset \mathrm{Stab}_G(W)$.

Since $[\operatorname{Lie}(\mathbf{H}), \operatorname{Lie}(\mathcal{N}_{\mathbf{G}}(\mathbf{H}))]
\subset \operatorname{Lie}(\mathbf{H})$, we have $[W ,Z]\subset \operatorname{Lie}(H)$, whence
$Z \subset (\mathrm{Lie}(H) : W)$. 

To prove $Z \subset \mathrm{Inv}_H(V/W)$, choose $z \in Z$. We may suppose $z\notin W$.
Let $A = \mathcal{O}(H)\otimes \wedge(\tau)$ with $\tau$ an odd variable. We have an
$A$-point $e(\tau, z)$ of $\mathcal{N}_{\mathbf{G}}(\mathbf{H})$ by Lemma \ref{lem:sub-pair}. 
Given a basis $w_1,\dots, w_r$ of $W$, we extend it, adding $z$ and others, to a basis
$w_1,\dots, w_r, z, u_1,\dots, u_s$ of $V$.
Present $\rho_H(z)$ as
\[ 
\rho_H(z) = \sum_{i=1}^r a_i \otimes w_i + b \otimes z + \sum_{i=1}^s c_i \otimes u_i \in \mathcal{O}(H)\otimes V. 
\]
Let $h \in H(A_0)$ be $\operatorname{id}_{\mathcal{O}(H)}$. Using Lemmas \ref{lem:relations} and \ref{lem:egfg},
one computes
\begin{equation}\label{eq:e_tau_z}
\begin{aligned}
&e(\tau, z) \, h\, e(\tau, z)^{-1} =\\ 
&\quad h\, e(a_1\tau, w_1)\dots e(a_r\tau, w_r)\, e((b-1)\tau, w_r)\,
e(c_1 \tau, u_1)\dots e(c_s \tau, u_s).
\end{aligned}
\end{equation}
Since this is contained in $\mathbf{H}(A)$, it follows by the same argument as proving 
Lemma \ref{lem:sub-pair} 
that $b=1$ and $c_i=0$, $1 \le i \le s$, whence
$Z \subset \mathrm{Inv}_H(V/W)$. 
We have thus proved \eqref{eq:one_direction_inclusions}. 

Next, to prove the converse inclusions, choose 
$\phi : A \to A'$ from $\mathsf{SAlg}_{\Bbbk}$.

Let $g$ be an $A$-point of $\mathcal{N}_{G}(H)\cap \mathrm{Stab}_G(W)$. Then $g_{A'}$ normalizes $H(A')$. 
Given $a\in A'_1$ and $w \in W$, we have
\[
e(a, w)^{g_{A'}} = 1 \otimes 1 + a\, w^g \in \mathbf{H}(A'), 
\]
and the same result with $g$ replaced by $g^{-1}$ holds. 
This proves $g \in K(A)$.  

Let $v \in \mathrm{Inv}_H(V/W) \cap (\mathrm{Lie}(H) : W)$ and $0\ne a \in A_1$. 
To see that $v \in Z$, we wish to prove, using Lemma \ref{lem:sub-pair}, that
$e(a, v)$ is an $A$-point of $\mathcal{N}_{\mathbf{G}}(\mathbf{H})$. Note that the $A'$-point
$e(a, v)_{A'}$ of its image is $e(\phi(a), v)$. 
Given $h \in H(A')$, the
same argument as proving \eqref{eq:e_tau_z} shows 
$e(a,v)_{A'}\, h\, e(a,v)_{A'}^{-1} \in \mathbf{H}(A')$, since $v^h- v \in W_{A'_0}$. 
Given $w \in W$ and $b \in A'_1$, we see by Lemma \ref{lem:relations} (i) that
\[ 
e(a, v)_{A'}\, e(b, w)\, e(a,v)_{A'}^{-1} = i(f(-\phi(a)b, [v, w]))\, e(b,w) \in \mathbf{H}(A'), 
\]
since $[v,w]\in \operatorname{Lie}(H)$. The last two conclusions prove the desired result. 

(2)\
We only prove
\[
K \subset \mathcal{Z}_{G}(H)\cap \mathrm{Cent}_G(W),\quad 
Z \subset \mathrm{Inv}_H(V) \cap (0 : W).
\]
The converse inclusions follow by modifying slightly the second half of the proof of Part 1. 

Since $K$ centralizes $\mathbf{H}$ in $\mathbf{G}$.
it follows that $K$ centralizes $H$ in $G$, 
whence $K \subset \mathcal{Z}_{G}(H)$.
It also follows that the restricted right $K$-supermodule structure on $\operatorname{Lie}(\mathbf{G})$
centralizes $\operatorname{Lie}(\mathbf{H})$, whence $K \subset \mathrm{Cent}_G(W)$.

Since $[\operatorname{Lie}(\mathbf{H}), \operatorname{Lie}(\mathcal{Z}_{\mathbf{G}}(\mathbf{H}))]
= 0$, we have $[W ,Z]=0$, whence
$Z \subset (0 : W)$. The argument which proved $Z \subset \mathrm{Inv}_H(V/W)$ above, modified with 
$W$ replaced by $0$, shows $Z \subset \mathrm{Inv}_H(V)$. 
\end{proof}

Suppose that $\mathbf{G}=\mathbf{H}$, and so $G=H$, $V=W$. Then Part 2 above reads:

\begin{corollary}\label{cor:center}
Let $\mathbf{G}$ and $(G,V)$ be as above. The sub-pair of $(G,V)$ corresponding to the center
$\mathcal{Z}(\mathbf{G})= \mathcal{Z}_{\mathbf{G}}(\mathbf{G})$ of $\mathbf{G}$ is
\[
( \mathcal{Z}(G)\cap \mathrm{Cent}_G(V),\ \mathrm{Inv}_G(V) \cap (0 : V) ).
\]
\end{corollary}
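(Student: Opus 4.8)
The plan is to obtain Corollary \ref{cor:center} as the special case $\mathbf{H}=\mathbf{G}$ of Theorem \ref{thm:normal_central}~(2), so that essentially no new work is required beyond checking that the various subgroups and subspaces appearing in Part~2 specialize correctly.

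First I would record that, by the very definition of the center, $\mathcal{Z}(\mathbf{G})=\mathcal{Z}_{\mathbf{G}}(\mathbf{G})$; thus taking $\mathbf{H}=\mathbf{G}$ in the centralizer $\mathcal{Z}_{\mathbf{G}}(\mathbf{H})$ produces exactly the closed super-subgroup whose sub-pair we wish to describe. Under this specialization the corresponding sub-pair $(H,W)$ of $(G,V)$ becomes $(G,V)$ itself, since the sub-pair attached to $\mathbf{G}$ regarded as a closed super-subgroup of itself is the whole pair (so $H=G$ and $W=V$).

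Next I would substitute $H=G$ and $W=V$ into the two components of the sub-pair given by Theorem \ref{thm:normal_central}~(2). For the even part, $\mathcal{Z}_{G}(H)=\mathcal{Z}_{G}(G)=\mathcal{Z}(G)$ is the center of the algebraic group $G$, and $\mathrm{Cent}_G(W)=\mathrm{Cent}_G(V)$; hence the even component collapses to $\mathcal{Z}(G)\cap\mathrm{Cent}_G(V)$. For the odd part, $\mathrm{Inv}_H(V)=\mathrm{Inv}_G(V)$ and $(0:W)=(0:V)$, so the odd component becomes $\mathrm{Inv}_G(V)\cap(0:V)$. This is precisely the sub-pair asserted in the corollary.

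Since every ingredient is a direct instance of the already-proved theorem, there is no genuine obstacle; the only point requiring a moment of care is the bookkeeping that the relative centralizer $\mathcal{Z}_{G}(H)$ and the relative invariants $\mathrm{Inv}_H(V)$ reduce to the honest center $\mathcal{Z}(G)$ and to $\mathrm{Inv}_G(V)$ when $H=G$, which is immediate from their definitions.
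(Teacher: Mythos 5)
Your proposal is correct and is exactly the paper's argument: the corollary is obtained by specializing Theorem \ref{thm:normal_central}~(2) to $\mathbf{H}=\mathbf{G}$, so that $H=G$, $W=V$, and the formula collapses to the stated sub-pair.
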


The algebraic-group component of this sub-pair was obtained by \cite[Proposition 7.1]{MZ}
in an alternative way. 
\bigskip

\noindent
\emph{Note added in revision.}\quad In present paper the term ``algebraic (super)group" is used
in a restricted sense to indicate \emph{affine} algebraic (super)groups 
(see Section \ref{subsec:ASG_and_HCP}), and
the category equivalence theorem, Theorem \ref{thm:equivalence_over_field}, concerns 
affine algebraic supergroups.  
Very recently Alexandr Zubkov and the first-named author generalized this theorem
to not necessarily affine, algebraic supergroups over a field of characteristic $\ne 2$,
and showed that the results in Section \ref{subsec:application} above hold for those supergroups, more generally.
Details will be contained in a forthcoming joint paper.

\section*{Acknowledgments}
The first-named author was supported by JSPS Grant-in-Aid for Scientific Research (C)~~26400035. 
The second-named author was supported by Grant-in-Aid for JSPS Fellows 26E2022.
The authors thank Alexandr Zubkov for his valuable comments on an earlier version of the paper,
which include a corrected proof of Proposition \ref{prop:PhiA} (2).

\end{document}